\newif\ifdetails\detailsfalse
\newif\ifprinttoc\printtocfalse
\newlist{assumptions}{enumerate}{1}
\setlist[assumptions]{
    label=\Alph*),
    ref=Assumption \theassumption.\Alph*),
    noitemsep
}
\newif\ifunicode\unicodetrue
\def\t{\tilde}
\newcommand{\wt}{\widetilde}
\newcommand{\trans}{⫛}
\newcommand{\bfC}{𝐂}
\newcommand{\bfT}{𝐓}
\newcommand{\bfV}{𝐕}
\newcommand{\bfM}{𝐌}
\newcommand{\mcI}{ℐ}
\newcommand{\mcH}{ℋ}
\newcommand{\mcV}{𝒱}
\newcommand{\trans}{\pitchfork}
\newcommand{\bfC}{{\mathbf C}}
\newcommand{\bfT}{{\mathbf T}}
\newcommand{\bfV}{{\mathbf V}}
\newcommand{\bfM}{{\mathbf M}}
\newcommand{\mcI}{{\mathcal I}}
\newcommand{\mcH}{{\mathcal H}}
\newcommand{\mcV}{{\mathcal V}}
\let\mathscr=\mathcal
\let\csname ver@ucs.sty\endcsname=\relax
\def\@firstofthree#1#2#3{#1}
\let\expandafter\t@alpha@prefix\expandafter=\@firstoftwoα
\let\expandafter\t@sigma@prefix\expandafter=\@firstoftwoσ
\let\expandafter\t@cal@prefix\expandafter=\@firstofthreeℰ
\def\t@greek#1#2{\t@real{#1#2}}
\def\t@cal#1#2#3{\t@real{#1#2#3}}
\def\t@{\@ifnextchar\bgroup{\t@real}{\t@@}}
\def\t@@#1{%
    \let\t@helper=\t@real
    \ifx\t@alpha@prefix#1
        \let\t@helper=\t@greek
    \fi
    \ifx\t@sigma@prefix#1
        \let\t@helper=\t@greek
    \fi
    \ifx\t@cal@prefix#1
        \let\t@helper=\t@cal
    \fi
    \t@helper#1
}
\def\t{\let\t@real=\tilde\t@}
\newcommand{\wt}{\let\t@real=\widetilde\t@}
\newtheorem{theorem}{Theorem}
\newcommand{\mynewtheorem}[2]{
\newtheorem{#1}{#2}
\expandafter\def\csname #1autorefname\endcsname{#2}
}
\theoremstyle{definition}
\theoremstyle{remark}
\newtheorem{example}{Example}
\newcounter{step}
\DeclareMathOperator{\const}{const}
\DeclareMathOperator{\codim}{codim}
\DeclareMathOperator{\Vect}{Vect}
\DeclareMathOperator{\Sing}{Sing}
\DeclareMathOperator{\Per}{Per}
\DeclareMathOperator{\Sep}{Sep}
\title{Global bifurcations in the two-sphere: a new perspective}
\author{%
Yu. Ilyashenko%
\thanks{National Research University Higher School of Economics, Russia}\ %
\thanks{Cornell University, US}\ %
\thanks{Independent University of Moscow}\ %
\thanks{The authors were supported in part by the grant RFBR 16-01-00748},
\and Yu. Kudryashov\footnotemark[1]\ \footnotemark[2]\ \footnotemark[3],
\and I. Schurov\footnotemark[1]\ \footnotemark[3]\ %
\thanks{Research of I.S. was supported in part by Dynasty Foundation}}
\begin{document}

\maketitle

\begin{abstract}
    We construct an open set of structurally unstable three parameter families whose weak and so called moderate topological classification defined below has a numerical invariant that may take an arbitrary positive value.
    Here and below “families” are “families of vector fields in the two-sphere”.
    This result disproves an Arnold's conjecture of 1985.
    Then we construct an open set of six parameter families whose moderate topological classification has a functional invariant.
    This invariant is an arbitrary germ of a smooth map $(ℝ_+, a)→(ℝ_+, b)$.
    More generally, for any positive integers $d$ and $d'$, we construct an open set of families whose topological classification has a germ of a smooth map $\left(ℝ_+^d, a\right)→\left(ℝ_+^{d'}, b\right)$ as an invariant.
    Any smooth germ of this kind may be realized as such an invariant.
    These results open a new perspective of the global bifurcation theory in the two sphere.
    This perspective is discussed at the end of the paper.
\end{abstract}
\ifprinttoc
\tableofcontents
\fi

\section{Introduction: structurally unstable families}\label{sec:intro}

There are many families of planar vector fields whose bifurcations are investigated up to now.
All of them are weakly structurally stable in their domains (in a neighborhood of a singular point or a polycycle), in a sense explained below.
In 1985 Arnold suggested a perspective of the development of the global bifurcation theory in the two sphere \cite{AAIS}.
In particular, he conjectured that generic families of vector fields considered on the whole sphere
are structurally stable.
The first result of this paper disproves this conjecture.

Arnold includes the conjecture mentioned above in the list of six.
Right after the statement of these conjectures Arnold writes:
\begin{quote}
    \itshape
    “Certainly proofs or counterexamples to the above conjectures are necessary for investigating nonlocal bifurcations in generic $l$-parameter families.”
\end{quote}

The current paper is motivated by the Arnold's conjectures.
As we show, the most nontrivial of them appeared to be wrong.
This opens a new perspective of the development of the global bifurcation theory in the two sphere.
It is discussed in \autoref{sec:perspective}.

\subsection{Basic definitions and notation}\label{sub:basic}
Recall the necessary definitions.
We give them in the general setting, though for our needs everywhere below we may take $M = S^2$.
Here and below $B⊂ℝ^k$ (base of a family) is a topological open ball.

Denote by $\Vect(M)$ the set of $C^3$-smooth vector fields on $M$.
\begin{definition}
    \label{def:fam}
    A \emph{family of vector fields} on a manifold $M$ with the base $B$ is a vector field $V$ on $B×M$ tangent to the fibers $\set{α}×M$, $α∈B$.
    The dimension of a family is the dimension of its base.
\end{definition}
An equivalent definition.
\begin{definition}
    \label{def:fam1}
    A \emph{family of vector fields} on $M$ with the base $B$ is a smooth map $V:B→\Vect(M)$.
\end{definition}
The equivalence is obvious.
Denote by $\mcV_k(M)$ the space of $k$-parameter families of vector fields on $M$ which are $C^3$ smooth as vector fields on $B×M$.
\begin{definition}
    \label{def:trans}
    A family of vector fields is \emph{transversal} to a Banach submanifold $\bfT$ of $\Vect(M)$ provided that the corresponding map $V:B→\Vect(M)$ is transversal to $\bfT$.
\end{definition}

\begin{definition}
    \label{def:oteq}
    Two vector fields $v$ and $\t v$ on a~manifold~$M$ are called \emph{orbitally topologically equivalent},
    if there exists a~homeomorphism~$ M→M$ that links the phase portraits of $v$ and $\t v$,
    that is, sends orbits of~$v$ to orbits of~$\t v$ and preserves their time orientation.
\end{definition}
\begin{definition}
    \label{def:weak-eq}
    Two families of~vector fields $\set{v_{α}| α∈B}$, $\set{\t v_{\t{α}}| \t{α}∈\t B}$ on $M$ are called \emph{weakly topologically equivalent} if there exists a map
    \begin{equation}
        \label{eqn:conj}
        H\colon B×M→\t B×M,\quad H(α, x)=(h(α), H_α(x))
    \end{equation}
    such that $h:B→\t B$ is a homeomorphism, and for each $α ∈ B$ the map $H_α\colon M → M $ is a homeomorphism that links the phase portraits of $v_α$ and $\t v_{h(α)}$.
\end{definition}

Two families are \emph{topologically equivalent} if there exists a \emph{homeomorphism} $H$ with above properties.
Topological classification of families with a very simple dynamics may have functional invariants that occur due to the requirement of the continuity of $H_α$ in $α$ \cite{R}.
Thus, the topological equivalence is too rigid.
On the other hand, weak topological equivalence introduced in \cite{AAIS} is too lousy:
some families with apparently different bifurcations occur to be weakly topologically equivalent.
For this reason we introduce here a new equivalence relation of \emph{moderate topological equivalence}.
This relation is needed and defined in this paper for vector fields having \emph{hyperbolic singular points only}.

For a family $V$ of vector fields, denote by $\Sing V$ the set of all the singular points of the family, by $\Per V$ the union of all the limit cycles, and by $\Sep V$ the union of all the separatrixes of the vector fields of the family.

\begin{definition}
    \label{def:moderate-eq-nonloc}
    We say that $V$ and $\t V$ are \emph{moderately topologically equivalent} provided that there exists a linking map $H$, see~\eqref{eqn:conj}, which is continuous in $(α, x)$ at the set $\overline{\Sing V ∪ \Per V ∪ \Sep V}$.
\end{definition}

Sometimes the parameter value $α=0$ is distinguished in some way.
Then we also consider a local version of the equivalence relation above.
\begin{definition}
    \label{def:moderate-eq-loc}
    Two families $V$ and $\t V$ are \emph{locally moderately topologically equivalent} at $α=0$ provided that there exists a linking map $H$, see~\eqref{eqn:conj}, which is continuous in $(α, x)$ at the set ${\overline{\Sing V ∪ \Per V ∪ \Sep V}∩\set{α=0}}$.
\end{definition}

A definition of moderate topological equivalence for general families of vector fields is given in \cite{GI}, work in progress.
It requires more technical details that we skip here.

\begin{definition}
    We say that a~family of vector fields is \emph{moderately (weakly) structurally stable} if it is moderately (weakly) topologically equivalent to its small perturbations.
\end{definition}

\subsection{Main results}
Our main results are

\begin{theorem}
    \label{thm:unst}
    There exists a non-empty open subset of $\mcV_3(S^2)$ such that each family from this set is moderately structurally unstable.
    Moreover, moderate topological classification of these families has numeric invariant that may take any positive value.
\end{theorem}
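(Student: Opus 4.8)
The plan is to exhibit one template field $v_0\in\Vect(S^2)$ carrying a degenerate configuration $C$ --- a monodromic polycycle $\Gamma$ implanted into an otherwise structurally stable phase portrait --- with three properties: (i) the fields carrying $C$ fill a Banach submanifold $\bfT\subset\Vect(S^2)$ of codimension three; (ii) the germ of $v_0$ along $\Gamma$ carries a numerical modulus $\mu(v_0)\in ℝ_{>0}$, built from the eigenvalues at the vertices of $\Gamma$ (so $\mu$ is a smooth, but \emph{not} a topological, invariant of $v_0$ alone); (iii) the value $\mu(v_0)$ may be made any prescribed number in $ℝ_{>0}$ by a suitable choice of eigenvalues at the vertices, without disturbing (i). Then I would let $U\subset\mcV_3(S^2)$ be the set of families transversal to $\bfT$ (\autoref{def:trans}); transversality to a Banach submanifold is an open condition, so $U$ is open, and $U\neq\varnothing$ because it contains every small perturbation of the standard versal unfolding of $v_0$. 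For $V\in U$ set $\mu(V):=\mu(v_{\alpha_*})$, where $v_{\alpha_*}$ is the unique member of $V$ lying on $\bfT$ (unique after shrinking the base, since $\dim B=\codim\bfT=3$). There remain two claims: $\mu\colon U\to ℝ_{>0}$ is onto, which is immediate from (iii); and $\mu$ is constant on each moderate-equivalence class inside $U$. Granting the second, every $V\in U$ is moderately structurally unstable, since perturbing a single eigenvalue at a vertex of $\Gamma$ produces $V'\in U$ arbitrarily $C^3$-close to $V$ with $\mu(V')\neq\mu(V)$, hence not moderately equivalent to it.

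\textbf{Steps 1--2: the template and its bifurcations.}
First I would construct $v_0$: start from a structurally stable gradient-like field on $S^2$ and replace part of it by the configuration $C$, arranging separatrices into a monodromic polycycle $\Gamma$ whose monodromy map is a composition of Dulac corner maps and therefore carries a numerical modulus $\mu$ built from the eigenvalues at the vertices, doing this so that $\Gamma$ separates a source region from a sink region, at most one limit cycle is born under perturbation, and the rest of the phase portrait stays structurally stable (with enough sinks and sources to keep the index balance on $S^2$). To get $\codim\bfT=3$ one checks that the conditions cutting out $C$ are finitely many transversality conditions on invariant manifolds together with independent genericity conditions; the usual connection-transversality argument then presents $\bfT$ as a codimension-three Banach submanifold of $\Vect(S^2)$. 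Next I would write down the standard versal three-parameter unfolding $\{v_\alpha\}_{\alpha\in(ℝ^3,0)}$ and its bifurcation diagram $\Sigma\subset(ℝ^3,0)$: a finite union of smooth hypersurfaces --- separatrix-splitting surfaces, homoclinic-loop surfaces $S_i$ (a loop at the $i$-th vertex), and a double-cycle surface --- partitioning the base into finitely many cells of constant phase portrait. Finally, the quantitative core: using the Dulac maps $x\mapsto c_i x^{\lambda_i}(1+o(1))$ at the vertices and the regular transitions along the connecting arcs, compute the Poincaré map of $v_\alpha$ on a cross-section to $\Gamma$, and read off, in the cell $R$ carrying the unique limit cycle $\ell_\alpha$, the two rates at which $\ell_\alpha$ and the adjacent separatrices collapse onto $\Gamma$ as $\alpha$ tends to the two bounding loop-surfaces; their mutual relation is governed precisely by $\mu$ and is unchanged by reparametrising $\alpha$.

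\textbf{Step 3: rigidity.}
Now suppose $V,\tilde V\in U$ are moderately equivalent via $H=(h,H_\alpha)$, see~\eqref{eqn:conj}. Phase portraits change only across $\Sigma$ and our families are transversal, so $h$ carries $\Sigma$ onto $\tilde\Sigma$; comparing codimensions of the cells (equivalently, using that $\bfT$ is the unique deepest stratum) forces $h(\alpha_*)=\tilde\alpha_*$ and sends the cell $R$ and its two bounding loop-surfaces to the corresponding objects for $\tilde V$. Then $H_{\alpha_*}\colon S^2\to S^2$ carries $\Gamma$ to $\tilde\Gamma$ and vertices to vertices, and it must respect the flow orientation along $\Gamma$ and the adjacent source/sink pattern, hence it respects the labelling of the vertices and cannot permute them. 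Finally, invoke the continuity of $H_\alpha$ in $\alpha$ on $\overline{\Sing V\cup\Per V\cup\Sep V}$: as $\alpha\to\alpha_*$ within $R$, $H_\alpha(\ell_\alpha)=\tilde\ell_{h(\alpha)}$ and both cycles, together with the accumulating separatrices, collapse onto $\Gamma$, resp. $\tilde\Gamma$. Feeding in the two-sided asymptotics of Step 2 and observing that the only remaining freedom is the base homeomorphism $h$ --- which may rescale either of the two collapse rates but cannot alter their $\mu$-determined relation while staying a homeomorphism near the corner of $\Sigma$ where the two loop-surfaces meet --- we obtain $\mu(\tilde V)=\mu(V)$.

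\textbf{The main obstacle.}
The heart of the proof is the rigidity used in Step 3: that no homeomorphism, neither $h$ nor the fibrewise $H_\alpha$, can absorb $\mu$ --- equivalently, that the continuity requirement built into moderate equivalence is \emph{exactly} strong enough to freeze the polycycle modulus, although $\mu$ is only a smooth invariant of the individual field $v_0$, whereas weak equivalence, lacking any continuity in $\alpha$, is not. I expect this to reduce, after restricting to a two-disk transversal to $\bfT$, to a one-dimensional rigidity lemma stating that the germ at the corner of $\Sigma$ of the pair (bifurcation diagram; asymptotics of $\Per\cup\Sep$ nearby) is a complete moderate invariant that recovers $\mu$. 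Two further technical burdens are: making the Dulac asymptotics of Step 2 uniform over a full three-disk of parameters; and locating, inside the classification of polycycles arising in generic three-parameter families, an actual codimension-three $\Gamma$ with a numerical modulus, and checking that it can be surrounded on $S^2$ (index balance, no spurious saddle connections, correct placement of the attractor and the repeller) so as to make the whole construction a non-empty \emph{open} subset of $\mcV_3(S^2)$.
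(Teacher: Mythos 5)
Your overall scaffolding (a codimension-three Banach submanifold $\bfT$ of fields carrying a degenerate polycycle, openness of the set of transversal families, a modulus built from eigenvalue ratios at the vertices, surjectivity onto $ℝ_{>0}$ by tuning eigenvalues) matches the paper's. But the core of Step~3 — the rigidity — rests on a mechanism that does not work, and the paper's actual mechanism is absent from your proposal. You propose to read the invariant off from ``the two rates at which $\ell_\alpha$ and the adjacent separatrices collapse onto $\Gamma$,'' and you assert that reparametrisation ``may rescale either of the two collapse rates but cannot alter their $\mu$-determined relation.'' This fails: in moderate equivalence both $h$ and the fibrewise maps $H_\alpha$ are arbitrary homeomorphisms, so neither the rate of collapse measured in the base (destroyed by $h$, e.g.\ by $\varepsilon\mapsto e^{-1/\varepsilon}$) nor the rate measured in the fiber (destroyed by $H_\alpha$) is topologically meaningful; a ``relation between two rates'' survives only if you already know both rates are power laws in a common smooth parameter, which is exactly the structure a homeomorphism erases. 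Relatedly, your Step~2 picture of the bifurcation diagram as a \emph{finite} union of hypersurfaces cutting finitely many cells of constant phase portrait is structurally wrong for the configuration that is needed: if that were the case, the family would stand a good chance of being structurally stable.

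What the paper does instead is make the invariant \emph{discrete and counting-based}. The polycycle of type $TH$ is accompanied by two extra saddles $E$ (outside) and $I$ (inside the loop) whose separatrixes wind onto the polycycle $γ$ and the loop $l$; along the topologically distinguished line $ℰ=\set{σ_2=σ_3=0}$ in the base one gets two infinite sequences of \emph{sparkling saddle connections}, at parameter values $e_n$ and $i_m$ accumulating to $0$, with $\log(-\log e_n)=n\log(λ^2μ)+O(1)$ and $\log(-\log i_m)=m\log λ^{-1}+O(1)$ (\autoref{lem:asym}). The presence of a saddle connection is a property of the phase portrait, so $h|_ℰ$ must carry the set $\set{e_n}$ to $\set{\t e_n}$ and $\set{i_m}$ to $\set{\t i_m}$ up to finite index shifts, and the \emph{relative density} $\lim_{x\to 0} N_{(e_n)}(x)/N_{(i_m)}(x) = -\log λ/\log λ^2μ$ is preserved by \emph{every} homeomorphism of the line preserving the two sets — no smoothness or rate information is needed. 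Your construction contains no analogue of $E$ and $I$ and hence no accumulating sequence of topologically detectable events, so there is nothing homeomorphism-invariant left to extract; this is a genuine gap, not a technical burden to be discharged later.
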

\begin{theorem}
    \label{thm:func-unst}
    There exists a non-empty open subset of $\mcV_6(S^2)$ such that the moderate topological classification of families from this set has a functional invariant, namely a germ of a function $ f: (ℝ_+,a) →(ℝ_+,b)$.
    Moreover, any such germ for any positive $a$, $b$ may be realized as an invariant of this classification.
\end{theorem}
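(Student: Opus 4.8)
The plan is to obtain Theorem~\ref{thm:func-unst} by coupling two copies of the three-parameter construction behind Theorem~\ref{thm:unst}. That construction supplies a non-empty open set $U_3\subset\mcV_3(S^2)$, a distinguished base point $\alpha=0$, and a surjective moderate topological invariant $\mu\colon U_3\to\mathbb{R}_+$, where $\mu(V)$ is read off from the way a distinguished \emph{moving separatrix} of $V$ approaches a fixed hyperbolic saddle as $\alpha\to 0$. First I would make this quantitative: fix a cross-section $\tau$ at that saddle, equip it with the intrinsic chart determined by the hyperbolic germ of the saddle (canonical once its characteristic number is fixed), and record that $\mu(V)$ and, more generally, the germ at $\alpha=0$ of the position of the moving separatrix on $\tau$ depend smoothly on any finite set of extra parameters appended to the family while it stays in $U_3$, and that this local data can be chosen essentially freely.

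Next I would build $U_6$. Take base $B=B_1\times B_2\subset\mathbb{R}^3\times\mathbb{R}^3$ and place on $S^2$ two copies of the Theorem~\ref{thm:unst} gadget, copy $i$ driven by $B_i$, so arranged that copy~$1$'s moving separatrix and copy~$2$'s moving separatrix sweep one common cross-section $\tau$ from opposite sides, with their arrival maps into $\tau$ submersions transversal to each other, and with an auxiliary transition region between the two copies whose correspondence map along $\tau$ is a free design parameter. The conditions \enquote{all singular points hyperbolic}, \enquote{each copy realizes the structurally stable bifurcation of Theorem~\ref{thm:unst}}, and the stated transversality are open, so these families form a non-empty open $U_6\subset\mcV_6(S^2)$. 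For $V\in U_6$, freezing the $B_2$-parameters along the stratum where copy~$2$'s separatrix actually reaches $\tau$ turns $V$ into a three-parameter family lying in $U_3$ whose numeric invariant is the $\tau$-position of the marked point left by copy~$2$; letting that point run over its one-parameter range and recording the corresponding numeric invariant $t$ produces the germ $f_V\colon(\mathbb{R}_+,a)\to(\mathbb{R}_+,b)$ --- equivalently, $f_V$ is the coincidence locus $\{\,(t,s):\text{the two moving separatrices meet on }\tau\,\}$ read through the two copies' intrinsic charts and the transition map. Smoothness of $f_V$ and $f_V(a)=b$ come from the first step and the choice of base point.

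I expect the crux, and the main obstacle, to be the proof that $f_V$ is a moderate topological invariant. Given a moderate linking map $H$ between $V$ and $\tilde V$, its base homeomorphism carries the bifurcation diagram of $V$ onto that of $\tilde V$, hence the coincidence stratum and the two arrival strata onto their counterparts; applying the invariance of $\mu$ from Theorem~\ref{thm:unst} slice by slice already gives that $f_V$ and $f_{\tilde V}$ agree up to reparametrizations of source and target. The content is to force those reparametrizations to be identities: the continuity of $H$ on $\overline{\Sing V\cup\Per V\cup\Sep V}$ compels each $H_\alpha$ to respect the structure that the hyperbolic saddles adjacent to $\tau$ imprint on $\tau$ --- exactly the data defining the intrinsic charts --- so the charts on the two sides correspond and $f_V=f_{\tilde V}$ as germs. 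A priori $H_\alpha$ may grossly distort $\tau$, and one must extract from the accumulation set $\overline{\Sing V\cup\Per V\cup\Sep V}$ enough rigidity to make the two intrinsic charts canonically comparable; this is the very rigidity already behind Theorem~\ref{thm:unst}, and precisely the place where \enquote{moderate} equivalence (stronger than weak, weaker than full topological) does its work.

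Finally, realizability: given an arbitrary smooth germ $f\colon(\mathbb{R}_+,a)\to(\mathbb{R}_+,b)$, I would build $V\in U_6$ with $f_V=f$ by choosing the auxiliary transition region so that its correspondence map, composed with the two intrinsic charts, equals $f$ on the relevant strata --- any smooth germ being realizable as such a correspondence map of a suitable local vector field (this is also what circumvents the fact that composing only hyperbolic-saddle passages would yield merely power-type germs). Assembling these choices smoothly and perturbing, if necessary, to restore the open conditions of the construction yields the required family. I expect this last step to be mostly bookkeeping once the correspondence \enquote{prescribed germ $\leftrightarrow$ realizing local field} and the free tunability of each copy's Theorem~\ref{thm:unst} data are made explicit, the only care needed being to keep hyperbolicity and all transversality intact along the whole construction. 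The general $(d,d')$ statement then follows verbatim with $B_1\subset\mathbb{R}^{3d}$, $B_2\subset\mathbb{R}^{3d'}$, replacing the single moving separatrix of each copy by a $d$- (resp. $d'$-) tuple of them.
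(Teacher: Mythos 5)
Your proposal diverges from the paper's proof in a way that leaves two genuine gaps. First, the dimension count. You couple two disjoint copies of the codimension-$3$ gadget, so the locus in the $6$-dimensional base where both degeneracies are present is generically a single point, not a curve; there is then no one-parameter family of degenerate vector fields along which a function could be recorded. The paper instead builds a \emph{single} codimension-$5$ class $\bfT_2$: one outer monodromic polycycle $\gamma_e$ with three saddles, containing two inner polycycles $\gamma_i^1,\gamma_i^2$, so that a transversal $6$-parameter family meets $\bfT_2$ in a curve $J_V$, and the functional invariant is the relation between the two invariant functions $\varphi_j=-\log\lambda(\gamma_i^j)/\log\lambda(\gamma_e)$ restricted to $J_V$ (\autoref{thm:invfun} and \autoref{prop:many-diag}). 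Your ``coincidence locus of two moving separatrices on a common cross-section'' has no analogue of this curve, and ``freezing the $B_2$-parameters'' leaves no one-parameter range for anything to run over.

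Second, and more fundamentally, the invariance mechanism you propose does not work. You want the moderate equivalence to respect an ``intrinsic chart'' on a cross-section $\tau$ determined by the smooth germ of an adjacent hyperbolic saddle, and to read the invariant off as a position in that chart. A moderate equivalence is only a fiberwise homeomorphism, continuous in $(\alpha,x)$ on $\overline{\Sing V\cup\Per V\cup\Sep V}$; it carries no smooth structure and can distort any chart on $\tau$ arbitrarily, so positions of separatrices on cross-sections are not topological invariants --- extracting invariants from such continuity requirements is exactly the phenomenon of \cite{R} that the notion of moderate equivalence is designed to exclude. The paper's invariant is instead extracted from purely topological data in the \emph{parameter}: the two countable sets $\{i_m\}$, $\{e_n\}$ of parameter values at which interior and exterior sparkling saddle connections occur must be matched (up to index shift) by the base homeomorphism, and the relative density $\lim N_e/N_i$ of such a pair of sequences is a homeomorphism invariant, identified with $-\log\lambda_i/\log\lambda_e$ by the Generalized Asymptotic Lemma. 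Your realizability step has the same defect: prescribing $f$ as the correspondence map of a ``transition region'' prescribes smooth, not topological, data; in the paper realizability comes from choosing the characteristic numbers $\lambda_j$ freely, subject only to the open inequalities \eqref{eqn:lg}, as functions of the parameter along $J_V$.
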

More generally,
\begin{theorem}
    \label{thm:fuinv}
    For any positive integers $d$, $d'$ there exists an open subset of $\mcV_k(S^2)$, $k = 3d + 2d' +1$, such that the moderate topological classification of families from this set has a functional invariant, namely a germ of a map $ f: \left( ℝ_+^d, a \right) →\left( ℝ_+^{d'},b \right)$.
    Moreover, any such germ for any positive vectors $a ∈ℝ_+^d$, $b ∈ℝ_+^{d'}$ may be realized as an invariant of this classification.
\end{theorem}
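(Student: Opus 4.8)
The plan is to extend the construction behind \autoref{thm:func-unst} from a single input and a single output ``slot'' to $d$ inputs and $d'$ outputs, the additional $3(d-1)+2(d'-1)$ parameters running further copies of the respective gadgets. I would start from a reference field $v_0\in\Vect(S^2)$ carrying a heteroclinic network with $d$ distinguished \emph{input} saddles $A_1,\dots,A_d$, $d'$ \emph{output} saddles $C_1,\dots,C_{d'}$, a chain of separatrix connections routing the unstable separatrices of the $A_i$, through intermediate hyperbolic saddles, into the stable separatrices of the $C_j$ (each $C_j$ collecting a prescribed combination of incoming branches), and auxiliary hyperbolic sinks and sources making the phase portrait structurally stable away from the network. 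Then I would unfold $v_0$ by $k=3d+2d'+1$ parameters with the following roles: three at each $A_i$ (one moving the hyperbolicity ratio $\lambda_i:=\lambda(A_i)$, two splitting its two incident network separatrices), two at each $C_j$ (splitting its two incident separatrices), and one global parameter $\varepsilon$ switching the coupling on. By construction the map $V\colon B\to\Vect(S^2)$ is transversal (\autoref{def:trans}) to each of the finitely many codimension-one Banach submanifolds $\bfT_i\subset\Vect(S^2)$ of fields having the corresponding connection; transversality and the absence of other degeneracies are open, so the construction persists on a neighborhood of $V$ in $\mcV_k(S^2)$, which is the required open set.

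Next I would compute the correspondence maps along the network. A separatrix meeting a cross-section near a saddle of hyperbolicity ratio $\lambda$ at signed distance $t$ from the stable separatrix leaves near the unstable one at distance $\sim t^{\lambda}$; composing such maps along the chain expresses the splitting $y_j$ of the $j$-th output connection through the splitting coordinates $x=(x_1,\dots,x_d)$ of the input connections, the $\lambda_i$, and $\varepsilon$, as a composition of power maps and fixed smooth pieces. The point is that the \emph{position} of a separatrix as a function of the unfolding parameters can be prescribed almost freely by a pasting construction, and, combined with Borel's lemma on Taylor coefficients and the $\lambda$-parameters, this lets one synthesize an arbitrary smooth germ: for any prescribed $f\colon(ℝ_+^d,a)\to(ℝ_+^{d'},b)$ one chooses the network, the ratios and $\varepsilon$ so that, in normalized chart coordinates on $B$ built from the splitting parameters, the bifurcation set of the family --- the locus where the output connections occur --- is exactly the graph $\{\, y=f(x)\,\}$.

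It remains to see that $f$ is an invariant of the moderate classification. If $V,\t V$ lie in the open set and $H(\alpha,x)=(h(\alpha),H_\alpha(x))$ is a moderate equivalence, then fiberwise topological equivalence forces $h$ to carry the input and output splitting surfaces of $V$ onto those of $\t V$: each such surface is detected by $H_\alpha$ through a local change of phase portrait across it --- appearance of a saddle connection, birth of a limit cycle --- so $h$ maps the graph of $f$ onto the graph of $\t f$. Finally, continuity of $H$ on $\overline{\Sep V}$ rigidifies $h$ transversally along these surfaces enough that, read in the normalized charts, $h$ induces germs of homeomorphisms of $ℝ_+^d$ and $ℝ_+^{d'}$ fixing $a$ and $b$ that conjugate $f$ to $\t f$. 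Hence the germ of $f$, up to source and target reparametrization fixing the base points, is an invariant of the moderate classification, and by the previous step it takes every prescribed value --- both assertions of the theorem.

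I expect the crux to be the rigidity in the last step: showing that ``continuous at $\overline{\Sing V\cup\Per V\cup\Sep V}$'' is precisely the right regularity --- strong enough that the base homeomorphism cannot smooth $f$ away (so the families are genuinely moderately unstable and $f$ is genuinely an invariant), yet weak enough that the construction of the previous step survives it --- while simultaneously controlling the ``sparkling'' accumulation of the connection loci near the network and verifying that the auxiliary sinks, sources and intermediate saddles contribute no moduli beyond $f$.
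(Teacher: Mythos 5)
Your construction is in the general spirit of the paper (a highly degenerate field unfolded by $k=3d+2d'+1$ parameters, with the invariant ultimately encoded in hyperbolicity ratios of saddles), but the step where your argument must actually \emph{produce} an invariant --- the last one --- fails, and the idea that makes it work in the paper is absent. You arrange that, in smooth charts built from the splitting parameters, the locus of output connections is the graph of $f$, and you then assert that a moderate equivalence $H=(h,H_\alpha)$ must carry this graph to the graph of $\t f$ and that continuity on $\overline{\Sep V}$ ``rigidifies $h$ transversally'' enough to conjugate $f$ to $\t f$. This is not so. The splitting parameters are only smooth coordinates; a moderate equivalence provides only a \emph{homeomorphism} $h$ of the bases, constrained to map topologically distinguished subsets (the zero loci of the splitting parameters, cf. \autoref{prop:dist}) to the corresponding subsets, and nothing more. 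The germ of the graph of any continuous $f$ is homeomorphic, as a germ of a pair (base, graph), to the germ of the graph of any other continuous map with the same source and target dimensions, so the topological type of your bifurcation diagram does not remember $f$: a base homeomorphism can ``smooth $f$ away'' exactly as you fear. There is no mechanism in your proposal that pins down numerical values against an arbitrary homeomorphism of the base; your appeal to Borel's lemma and pasting addresses realizability but not invariance.

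The paper extracts the invariant by an entirely different mechanism: \emph{relative densities of sequences of sparkling saddle connections}. The degenerate field has a large polycycle $\gamma_e$ (monodromic from outside, $\lambda(\gamma_e)>1$) containing $D=d+d'$ polycycles $\gamma_i^j$ (monodromic from inside, $\lambda(\gamma_i^j)<1$); along a distinguished one-dimensional direction $\varepsilon$ in the base there are two interleaved sequences of parameter values $e_n$, $i_m$ at which a separatrix of an exterior saddle $E$ (resp.\ interior saddle $I_j$) makes $n$ (resp.\ $m$) turns before closing into a connection. The Asymptotic Lemma (\autoref{lem:asym1}) gives $\log(-\log i_m)=-m\log\lambda_i+O(1)$ and $\log(-\log e_n)=n\log\lambda_e+O(1)$, whence the counting functions satisfy $N_e/N_i\to-\log\lambda_i/\log\lambda_e$. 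This limit is invariant under \emph{any} germ of homeomorphism of the parameter line, because it depends only on the germs of the two countable sets of connection values, which $h$ must permute. This yields invariant functions $\varphi_j=-\log\lambda(\gamma_i^j)/\log\lambda(\gamma_e)$ on the $d$-dimensional intersection $J_V=V\cap\bfT_D$ (here $\codim\bfT_D=2D+1$ and $k=2D+1+d$), and the functional invariant is the ``simple diagram'' $(\varphi_1,\dots,\varphi_D)|_{J_V}$, whose image is generically the graph of a map $(\mathbb{R}_+^d,a)\to(\mathbb{R}_+^{d'},b)$; realizability follows because the $\lambda_j$ can be prescribed freely subject to \eqref{eqn:lg1}. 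Without an analogue of this accumulation-and-counting argument, your proof cannot be completed.
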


\autoref{thm:func-unst} is a corollary of \autoref{thm:fuinv}, but we state and prove it separately because it is simpler and its proof contains all the main ideas needed for the proof of the more general result.
These theorems hold true if the moderate equivalence in their statement is replaced by the weak equivalence.
The proofs of these modified theorems require more technical details, and we skip them.

\section{Numerical invariants}\label{sec:family}

\subsection{A special class of degenerate vector fields of codimension three} \label{sub:unperturbed}

In this section we prove \autoref{thm:unst} modulo so called \nameref{lem:asym} proved in \autoref{sec:aux}.
We will first describe the degenerate vector fields, then their unfoldings.

\subsubsection{Polycycles in the sphere}

We do not recall here the detailed definitions of polycycles and their monodromy maps;
they may be found in \cite{cent}.
We only mention that a hyperbolic polycycle is a separatrix polygon whose vertexes are hyperbolic saddles, and edges are saddle connections.
The monodromy (or Poincaré map) $Δ_γ$ along a polycycle $γ$, if exists, is defined like the Poincaré map of a cycle.
The only difference is that the map $Δ_γ$ is defined on a half interval, called a semi-transversal, with the vertex on $γ$, rather than on an interval.
If the monodromy map of the polycycle is well defined, then the polycycle is called \emph{monodromic};
if the corresponding semi-transversal points outside (inside) $γ$, then we say that $γ$ is monodromic from the exterior (from the interior).

\begin{remark}
    \label{rem:normalized}
    Interior and exterior domains on the sphere are to be specified.
    We represent the sphere $S^2$ as $ℝ^2∪\set{∞}$.
    We consider vector fields that have an attracting fixed point $A$ surrounded by a repelling hyperbolic limit cycle $γ_∞$ having no other singular points in the domain bounded by $γ_∞$ that contains $A$.
    We suppose that such a point is unique, and place it to infinity.
    To distinguish the limit cycle $γ_∞$, we will call it \emph{limit cycle near infinity}.
    Such a vector field on $S^2$ will be called \emph{normalized}.
    If two normalized vector fields are orbitally topologically equivalent, then the linking homeomorphism brings $∞$ to $∞$.
    For any Jordan curve on $ℝ^2=S^2∖\set{∞}$, the \emph{exterior} domain is the one that contains $∞$.
    For a polycycle in $ℝ^2$ represented as a union of Jordan curves, the \emph{interior} domain is the union of interior domains of these curves.
\end{remark}

In what follows, we consider vector fields close to a fixed normalized vector field.
Clearly, such vector field is orbitally topologically (and even smoothly) equivalent to a normalized vector field.
Therefore, we can normalize a family so that all vector fields of the normalized family are normalized.
We shall always assume that our families are normalized.

\subsubsection{A special polycycle of codimension three} \label{sub:spec1}

Consider a (normalized) vector field $v$ which has a polycycle $γ$ with two vertexes and three edges, see \autoref{fig:unperturbed}.
The vertexes are hyperbolic saddles $L$ and $M$ with the characteristic numbers $λ$ and $μ$.
Recall that the \emph{characteristic number} of~a~saddle is the modulus of the ratio of its eigenvalues, the negative one in the numerator.
Suppose that
\begin{align}
    \label{eqn:char}
    λ &< 1, &λ^2μ &> 1.
\end{align}

The edges are: a time oriented separatrix loop $l$ of $L$, and two time oriented saddle connections: $LM$ and $ML$, see \autoref{fig:unperturbed}.
These two connections form a polycycle “heart”:
two other separatrixes of $M$ are inside, and those of $L$ are outside this polycyle.
The polycycle $γ$ is monodromic from the exterior, and the loop $l$ from the interior.
The polycycle “heart” is not monodromic at all.

A polycycle $γ$ that satisfies these assumptions is called \emph{a polycycle of type $TH$} ($H$ of “heart”, and $T$ of “tear” that resembles the separatrix loop $l$).

\subsubsection{Vector fields of class \texorpdfstring{$\bfT$}{T}}\label{subsub:class-bfT}

Suppose that the vector field $v$ described above has a saddle $E$ outside the polycycle $γ$ and a~saddle~$I$ inside the separatrix loop~$l$ of~$L$.
The letters $E$ and $I$ come from the words “\emph{exterior}” and “\emph{interior}”.
Suppose that one of the unstable separatrixes of $E$ winds onto $γ$, and one of the stable separatrixes of~$I$ winds onto $l$ in the negative time.
Denote these separatrixes by $W^u_E$ and $W^s_I$.
In \autoref{sec:aux} we prove that inequalities~\eqref{eqn:char} imply the possibility of this winding, see \autoref{rem:wind}.
Polycycles described above may occur in generic three-parameter families.
Existence of separatrixes $W^u_E$ and $W^s_I$ winding to $γ$ and from $l$ does not increase the codimension of the degeneracy.

Suppose that there exists a smooth oriented arc $Γ$ that goes from $I$ to $E$, intersects $γ$ at a unique point $O$ and is transversal to $v$ strictly between its ends.
Denote by $Γ^+$, $Γ^-$ half open arcs of $Γ$ between $O$ and $E$ ($E$ excluded), between $O$ and $I$ ($I$ excluded) respectively.
The germs of the monodromy maps $Δ_γ:(Γ^+,O)→(Γ^+,O)$, $Δ_l:(Γ^-,O)→(Γ^-,O)$ along the polycycle $γ$ and the loop $l$ are well defined.
Suppose that the germs $Δ_γ, Δ_l^{-1}$ may be extended to the monodromy maps
\begin{align}
    \label{eqn:mon}
    Δ_γ&: Γ^+→Γ^+, &Δ_l^{-1}&: Γ^- → Γ^-
\end{align}
that have no fixed points except for $O$.

\begin{assumption}
    \label{as:*}
    The vector field $v$ has exactly two saddles, namely $E$ and $I$, with the following properties:
    a separatrix of the first saddle winds towards the polycycle $γ$;
    a separatrix of the second saddle winds towards the loop $l$ in the negative time.
    That is, no other saddle has one of these properties.
    Moreover, all the singular points of the vector field $v$ are hyperbolic.
\end{assumption}

The set of vector fields with these properties is called class $\bfT$.
\begin{figure}
    \centering\includegraphics{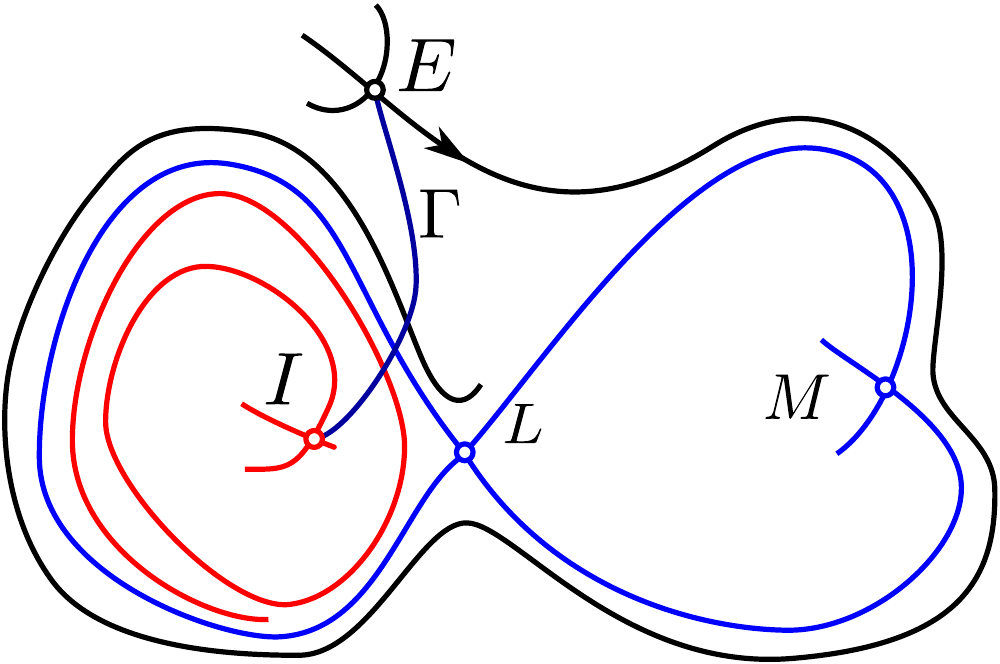}
    \caption{The phase portrait of the degenerate vector field $v$}
    \label{fig:unperturbed}
\end{figure}

Denote by $\mathring\bfT$ the set of vector fields $v\in\bfT$ that have no saddle connections, except for the edges of the polycycle $γ$.
In particular, a vector field $v∈\mathring\bfT$ has a unique polycycle homeomorphic to $γ$.
As we shall show in \autoref{subsub:ban}, the set $\mathring\bfT$ is a Banach submanifold of $\Vect(S^2)$ of codimension $3$.
Let $\bfC$ be the class of $3$-parameter families $V$ such that
\begin{itemize}
    \item $V$ has a unique intersection $v$ with $\bfT$;
    \item $v∈\mathring\bfT$;
    \item $V$ is transversal to $\mathring\bfT$ at $v$, see \autoref{def:trans}.
\end{itemize}

Denote by $\mcH$ (of \emph{hyperbolic}) the class of vector fields $v\in\Vect(S^2)$ such that
\begin{itemize}
    \item all singular points of $v$ are hyperbolic;
    \item the characteristic number of any saddle of $v$ is different from $1$, and from the inverse square of the characteristic number of any other saddle of $v$.
\end{itemize}
Obviously, $\mcH$ is open and dense in $\Vect(S^2)$, and $\mcH∩\bfT$ is dense in $\bfT$.

Consider a family $V∈\bfC$ intersecting $\bfT$ at $v∈\mathring\bfT$.
Let $γ$ be the polycycle of $v$ of the type $TH$;
let $λ$, $μ$ be the characteristic numbers of the saddles $L$, $M$, respectively.
Let us define
\begin{equation}
    \label{eqn:nu-def}
    ν(V)=\frac{-\log λ}{\log λμ^2}.
\end{equation}

The following theorem is a refinement of \autoref{thm:unst}.
\begin{theorem}
    \label{thm:unst1}
    The number $ν$ given by \eqref{eqn:nu-def} is a numeric invariant of the moderate classification of families $V∈\bfC$ such that $V⊂\mcH$,
    i.e. for two moderately equivalent families $V, \t V∈\bfC$ such that $V, \t V⊂\mcH$, we have
    \begin{equation}
        \label{eqn:unst1}
        ν(V)=ν(\t V).
    \end{equation}
\end{theorem}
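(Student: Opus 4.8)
The plan is to reduce \eqref{eqn:unst1} to an identity between the vector fields at the bifurcation values of $V$ and $\tilde V$, to read off $ν(V)$ as the ratio of two ``double--logarithmic'' rates at which the separatrices $W^u_E$ and $W^s_I$ accumulate onto the polycycle along the transversal $Γ$, and finally to show that a moderate topological equivalence preserves this ratio.

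\textit{Step 1: reduction to the bifurcation value.} Let $v=V(α_*)$ and $\tilde v=\tilde V(\tilde α_*)$ be the unique intersections of $V$, $\tilde V$ with $\bfT$; by the definition of $\bfC$ they lie in $\mathring\bfT$ and the families are transversal to $\mathring\bfT$ there, so $α_*$, $\tilde α_*$ are isolated in $B$, $\tilde B$. If $H=(h,\{H_α\})$ is a moderate equivalence of $V$ and $\tilde V$, then $h(α_*)=\tilde α_*$: possession of a polycycle of type $TH$ together with the two winding separatrices $W^u_E$, $W^s_I$ as in \autoref{as:*} is invariant under orbital topological equivalence of vector fields, and among the fields of the family $V$ (resp.\ $\tilde V$) this configuration occurs only at $v$ (resp.\ $\tilde v$). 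Hence $H_{α_*}\colon S^2→S^2$ is an orbital topological equivalence of $v$ and $\tilde v$; being the linking map of normalized families it preserves $∞$, hence preserves interior and exterior, and it carries $γ$ to the $TH$--polycycle $\tilde γ$ of $\tilde v$, $l$ to $\tilde l$, the saddles $L,M,E,I$ to $\tilde L,\tilde M,\tilde E,\tilde I$, and the winding separatrices to the winding separatrices. Pushing the transversal arc $Γ$ forward by $H_{α_*}$ to a transversal arc $\tilde Γ$ for $\tilde v$ (the monodromy maps are defined on any such arc), we may assume $H_{α_*}(Γ)=\tilde Γ$, $H_{α_*}(O)=\tilde O$, and that $H_{α_*}$ conjugates $Δ_γ\colon Γ^+→Γ^+$ to $Δ_{\tilde γ}\colon\tilde Γ^+→\tilde Γ^+$ and $Δ_l^{-1}\colon Γ^-→Γ^-$ to $Δ_{\tilde l}^{-1}\colon\tilde Γ^-→\tilde Γ^-$.

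\textit{Step 2: dynamical meaning of $ν$.} Fix a smooth chart on $Γ$ with $O=0$. The \nameref{lem:asym}, together with the non--resonance built into $\mcH$ (which is exactly what excludes logarithmic corrections in the Dulac expansions along $γ$ and along $l$), gives the pure power asymptotics
\begin{equation}
    \label{eqn:power}
    Δ_γ(x)=C_γ x^{λμ^2}\bigl(1+o(1)\bigr),\qquad Δ_l^{-1}(x)=C_l x^{1/λ}\bigl(1+o(1)\bigr)\qquad(x→0),
\end{equation}
with $λμ^2>1$ (from $λ<1$ and $λ^2μ>1$) and $1/λ>1$. Let $x_0∈Γ^+$ be the first intersection of $W^u_E$ with $Γ^+$ and $x_{n+1}=Δ_γ(x_n)$; let $y_0∈Γ^-$ be the first intersection of $W^s_I$ with $Γ^-$ and $y_{n+1}=Δ_l^{-1}(y_n)$. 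Then $x_n,y_n→O$, and iterating \eqref{eqn:power} gives
\begin{equation}
    \label{eqn:rates}
    \log(-\log x_n)=n\log(λμ^2)+O(1),\qquad \log(-\log y_n)=-n\log λ+O(1).
\end{equation}
Consequently $ν(V)=\lim_{n→∞}\dfrac{\log(-\log y_n)}{\log(-\log x_n)}=\dfrac{-\log λ}{\log λμ^2}$, and the analogous sequences $\tilde x_n,\tilde y_n$ of $\tilde v$ give $ν(\tilde V)=\lim_{n}\dfrac{\log(-\log\tilde y_n)}{\log(-\log\tilde x_n)}$.

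\textit{Step 3: invariance.} Since $H_{α_*}$ carries $W^u_E$ to $W^u_{\tilde E}$ preserving time orientation and $H_{α_*}(Γ^+)=\tilde Γ^+$, it sends $x_0$ to $\tilde x_0$, hence $H_{α_*}(x_n)=\tilde x_n$ for all $n$ by the conjugacy; similarly $H_{α_*}(y_n)=\tilde y_n$. The conjugacy $H_{α_*}|_{Γ^+}\circ Δ_γ=Δ_{\tilde γ}\circ H_{α_*}|_{Γ^+}$ together with \eqref{eqn:power}, by an elementary estimate on topological conjugacies of power maps, forces
\[
    \frac{\log\bigl(-\log H_{α_*}(x)\bigr)}{\log(-\log x)}→s_+:=\frac{\log(\tilde λ\tilde μ^2)}{\log(λμ^2)}\qquad\text{as }x→0\text{ along }Γ^+,
\]
and symmetrically this ratio tends to $s_-:=\log\tilde λ/\log λ$ along $Γ^-$. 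With \eqref{eqn:rates} this yields $ν(\tilde V)=(s_-/s_+)\,ν(V)$, so it remains to prove $s_+=s_-$. Here one invokes the defining property of moderate equivalence: $H$ is continuous in $(α,x)$ at the point $(α_*,O)$, which lies in $\overline{\Sing V∪\Per V∪\Sep V}$ since $O∈γ⊂\Sep V$ and $O∈\overline{W^u_E}$. For $α$ near $α_*$ the saddles $E$, $I$ and their separatrices persist, and unfolding the degeneracy produces two sequences of codimension--one bifurcation hypersurfaces in $B$ accumulating at $α_*$ — one governed by $Δ_γ$ and accumulating at ``rate $λμ^2$'', the other governed by $Δ_l$ and accumulating at ``rate $λ$'' — whose precise asymptotics are again furnished by the \nameref{lem:asym}. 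Joint continuity of $H$ at $(α_*,O)$ forces $h$ to carry these two sequences of hypersurfaces onto the corresponding ones of $\tilde V$, order--preservingly and with bounded index shift; reading this back through the asymptotics is exactly the equality $s_+=s_-$, whence $ν(\tilde V)=ν(V)$ and \eqref{eqn:unst1} follows.

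\textit{Main obstacle.} Steps 1 and 2 are essentially bookkeeping once the \nameref{lem:asym} is in hand; the substance is the last part of Step 3, squeezing $s_+=s_-$ out of continuity of the linking map at the corner $(α_*,O)$. This is where the family, rather than merely the phase portrait of $v$, is indispensable: a single orbital equivalence $v\sim\tilde v$ does \emph{not} determine $ν$, because the one--dimensional germs $Δ_γ$ on $Γ^+$ and $Δ_l^{-1}$ on $Γ^-$ can be conjugated to arbitrary power maps independently of one another, so it is the rigidity of the bifurcation diagram near $α_*$ — quantified by the \nameref{lem:asym} — that couples the two sides and pins down $ν$. I expect this coupling argument to be the technically heaviest part of the proof.
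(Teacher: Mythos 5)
Your overall strategy --- extract $\nu$ from the two interleaved sequences of sparkling saddle connections whose double-logarithmic asymptotics are given by the \nameref{lem:asym}, and show that a moderate equivalence must match these sequences up --- is the paper's strategy, and Step 1 agrees with the paper's use of \autoref{prop:bfT-dist}. But the decisive part of Step 3 has a genuine gap, in two respects. First, the \nameref{lem:asym} gives the asymptotics of $e_n$ and $i_m$ only along the one-dimensional line $\mcE=\set{\sigma_2=\sigma_3=0}$ in the base, i.e.\ along the locus where both ``heart'' connections $LM$, $ML$ persist; off this line no accumulation ``rates'' of the bifurcation hypersurfaces are established, so ``reading back through the asymptotics'' is undefined until you have shown that $h$ maps $\mcE$ onto the corresponding line $\tilde{\mcE}$. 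That identification is exactly where moderateness enters the paper's proof (\autoref{prop:dist}): continuity of $H$ in $(\alpha,x)$ at the connections $LM,ML\subset\Sep V$ guarantees that the image of the persisting connection $L(\sigma)M(\sigma)$ stays \emph{close to} $\tilde L\tilde M$, which is what forces $\tilde\sigma_2(h(\sigma))=0$; fiberwise orbital equivalence alone only says the image field has \emph{some} $\tilde L$--$\tilde M$ connection, which by the caveat following \autoref{prop:dist} (sparkling connections!) does not force $\tilde\sigma_2=0$. Your invocation of continuity of $H$ at the single point $(\alpha_*,O)$ does not supply this step.

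Second, once $h(\mcE)=\tilde{\mcE}$ is known and $h|_{\mcE}$ is seen to carry $(e_n)$, $(i_m)$ to $(\tilde e_{n-b})$, $(\tilde i_{m-a})$, the conclusion $\nu(V)=\nu(\tilde V)$ follows directly from the invariance of the relative density of the two sequences on that line (\autoref{def:coup-eq}, \autoref{lem:dens}, \autoref{cor:spar}); the phase-space accumulation sequences $x_n,y_n$ of Step 2 and the distortion factors $s_\pm$ of $H_{\alpha_*}$ on $\Gamma^\pm$ play no role. The claim that the matching of the parameter sequences ``is exactly the equality $s_+=s_-$'' is a non sequitur: $h|_{\mcE}$ and $H_{\alpha_*}|_{\Gamma}$ are different homeomorphisms on different spaces, and nothing in your argument relates the double-log distortion of one to that of the other. (Step 2 also over-claims pure power asymptotics $C\,x^{\lambda}(1+o(1))$ for the monodromies; the class $\mcH$ is not a non-resonance condition, and the paper only proves, and only needs, two-sided bounds $\Theta(x^{\lambda})$.) I recommend dropping Step 2 and the $s_\pm$ bookkeeping entirely, arguing purely on the parameter line $\mcE$, and supplying the missing step $h(\mcE)=\tilde{\mcE}$ via moderateness at the heart connections.
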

The ratio $ν$ given by \eqref{eqn:nu-def} may be expressed through $\frac{\log λ}{\log μ}$.
In view of~\eqref{eqn:char}, the latter ratio takes the values in $\left(-\frac 12, 0\right)$.
The ratio $ν$ may take arbitrary positive values.

\autoref{thm:unst} follows from \autoref{thm:unst1}, so it is enough to prove the latter theorem.

We introduced the class $\mcH$ because $\bfT$ is topologically distinguished in this set.
\begin{proposition}
    \label{prop:bfT-dist}
    Consider two orbitally topologically equivalent vector fields $v, \t v∈\mcH$.
    If $v∈\bfT$, then $\t v∈\bfT$.
    Moreover, a homeomorphism provided by \autoref{def:oteq} sends the polycycle $γ$, and the saddles $L$, $M$, $E$, $I$ of the vector field $v$ to similar objects for $\t v$.
\end{proposition}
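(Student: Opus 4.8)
The plan is to show that membership in $\bfT$ is detected by orbit-topological features that a linking homeomorphism must preserve, once we restrict to $\mcH$. First I would recall that a hyperbolic saddle is characterized topologically among hyperbolic singular points: it is the unique type with exactly four separatrices and a hyperbolic-sector local phase portrait, so the homeomorphism $h$ from Definition \ref{def:oteq} carries saddles of $v$ to saddles of $\t v$ and matches up their separatrices. Likewise $h$ carries the polycycle $\gamma$ (a compact invariant curve built from saddle connections and saddle vertices, bounding a monodromic annular neighborhood) to a topologically identical configuration for $\t v$; since $v\in\mathring\bfT$ has a unique polycycle of type $TH$ and $\t v$ is forced to have one too, $h(\gamma)=\t\gamma$. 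The characteristic numbers themselves are not orbit-topological invariants, but the \emph{inequalities} $\lambda<1$, $\lambda^2\mu>1$ are encoded in the monodromy behavior: $\lambda<1$ means the loop $l$ is attracting (monodromic from the interior with contracting Poincaré map), and $\lambda^2\mu>1$ governs the repelling character of the monodromy of $\gamma$ from the exterior. These attracting/repelling properties of one-sided Poincaré maps are preserved by $h$, so the image of $\gamma$ is again a $TH$-polycycle.

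Next I would identify $E$ and $I$ topologically. By Assumption \ref{as:*}, $v$ has exactly one saddle, namely $E$, with a separatrix winding onto $\gamma$, and exactly one saddle, namely $I$, with a separatrix winding onto $l$ in negative time; moreover there are no other saddles with these two properties. ``A separatrix winds onto the polycycle $\gamma$'' is an orbit-topological statement: the $\omega$- (or $\alpha$-) limit set of that separatrix ray is exactly $h(\gamma)=\t\gamma$, and $h$ takes separatrices to separatrices and limit sets to limit sets. Since $\t v\in\mcH$, all of $\t v$'s singular points are hyperbolic, so every saddle of $\t v$ is the $h$-image of a saddle of $v$; the saddle $\t E:=h(E)$ therefore has a separatrix winding onto $\t\gamma$, and by the ``exactly one'' clause applied to $\t v$ — which we get for free once we know $\t v$ has \emph{at least} the required structure — $\t E$ is the unique such saddle. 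Symmetrically for $\t I:=h(I)$ and the loop $\t l := h(l)$. The genericity built into $\mcH$ (characteristic numbers $\ne 1$ and $\ne$ inverse square of any other saddle's) is what rules out degenerate coincidences: it guarantees that saddles stay saddles under small perturbation and, crucially, that no saddle of $\t v$ can secretly play the role of both a $\gamma$-winding and an $l$-winding saddle, or fail the loop-monodromicity inequalities by equality.

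Finally I would assemble this: $\t v$ has two saddles $\t E$, $\t I$ with the required winding separatrices, a $TH$-polycycle $\t\gamma=h(\gamma)$ with a loop $\t l=h(l)$, all singular points hyperbolic (since $\t v\in\mcH$), and no other saddle with either winding property (transported from $v$ via $h$, using that $h$ is a bijection on saddles and on separatrices). Hence $\t v$ satisfies Assumption \ref{as:*}, i.e. $\t v\in\bfT$, and the homeomorphism $h$ sends $\gamma$, $L$, $M$, $E$, $I$ to the corresponding objects of $\t v$ by construction. The main obstacle I anticipate is the careful verification that the defining inequalities \eqref{eqn:char} — which are \emph{not} themselves topological invariants — are nonetheless forced on $\t v$: one must argue that the qualitative monodromy picture (loop monodromic from the interior with a well-defined contracting Poincaré map on a semi-transversal, $\gamma$ monodromic from the exterior, the ``heart'' non-monodromic) is exactly what the inequalities produce and is orbit-topologically rigid, and that the $\mcH$-condition prevents the borderline cases $\lambda=1$ or $\lambda^2\mu=1$ from occurring for $\t v$. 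This requires invoking the classical description of hyperbolic polycycle monodromy from \cite{cent} and checking it is invariant under orbital topological equivalence.
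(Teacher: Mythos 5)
Your proposal follows essentially the same route as the paper: transport $\gamma$, $l$, $L$, $M$, $E$, $I$ and the winding separatrices by the linking homeomorphism, use $\mcH$ to guarantee the images are hyperbolic saddles and to exclude the borderline equalities, use \autoref{as:*} to get uniqueness of the images of $E$ and $I$, and then force the inequalities~\eqref{eqn:char} from a preserved one-sided dynamical feature. The paper's witness for that last step is slightly more concrete than yours: it argues that if $\tilde\lambda>1$ (resp.\ $\tilde\lambda^2\tilde\mu<1$) then, by \autoref{rem:wind}, the transported separatrix $\tilde W^s_I$ (resp.\ $\tilde W^u_E$) could not wind onto $\tilde l$ (resp.\ $\tilde\gamma$), contradicting what the homeomorphism delivers; this avoids having to discuss preservation of Poincaré maps (whose transversals are not preserved by a homeomorphism) and works directly with limit sets of separatrices, which is what you also gesture at. One genuine error to fix: you have the dictionary between the inequalities and the dynamics backwards. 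Since $\Delta_l(x)=\Theta(x^{\lambda})$ with $\lambda<1$, the loop $l$ is \emph{repelling} from the interior (which is exactly why the \emph{stable} separatrix of $I$ winds onto it in negative time), and since $\Delta_\gamma(x)=\Theta(x^{\lambda^2\mu})$ with $\lambda^2\mu>1$, the polycycle $\gamma$ is \emph{attracting} from the exterior (why the unstable separatrix of $E$ winds onto it in forward time). The logical skeleton of your argument survives this correction, but as written the stated correspondence would lead you to the reversed inequalities.
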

\begin{proof}
    Let $H_0$ be a homeomorphism provided by \autoref{def:oteq}.
    Let $\t L$, $\t M$, $\t E$, $\t I$, $\t l$, $\t γ$, $\t W^s_I$, $\t W^u_E$, $\t Γ$ be the images of the corresponding objects for $v$ under $H_0$.
    Since $\t v∈\mcH$, the singular points $\t L$, $\t M$, $\t E$, $\t I$ are hyperbolic saddles for $\t v$.
    Next, $\t γ$ is a polycycle homeomorphic to $γ$, and $\t W^s_I$ (resp., $\t W^u_E$) is a separatrix of $\t I$ (resp., $\t E$) winding onto $l$ (resp., $γ$) in negative (resp., positive) time.
    Obviously, $\t E$, $\t I$ are unique saddles whose separatrixes wind onto $γ$, $l$.
    Indeed, if $\t W'^u_E$ is another separatrix winding onto $γ$, then its preimage under $H_0$ winds onto $γ$, which contradicts \autoref{as:*}.

    Finally, $\t γ$ satisfies all requirements of class $\bfT$, possibly except for \eqref{eqn:char}.
    Let $\t λ$, $\t μ$ be the characteristic numbers of $\t L$, $\t M$.
    Suppose that $\t λ≮1$.
    By definition of $\mcH$, we have $\t λ≠1$, $\t λ^2\t μ≠1$, hence $\t λ>1$.
    Due to \autoref{rem:wind} below, this implies that $\t W^s_I$ cannot wind onto $\t l$ in the negative time.
    Analogously, $\t λ^2\t μ≯1$ implies $\t λ^2\t μ<1$, hence $\t W^u_E$ cannot wind onto $\t γ$.
    Finally, $\t v∈\bfT$.
\end{proof}

\subsubsection{Extension to the whole sphere}
\label{subsub:ext}

Note that we described above only a part of the phase portrait of a vector field $v$ of class~$\bfT$.
Such a vector field may be extended to the whole sphere in many different ways.
One extension of a slightly perturbed vector field is shown in \autoref{fig:conn}.
Let us describe an extension of $v$, beginning with the interior of the loop $l$.
There are exactly three singular points inside this loop: the saddle $I$, one sink and one source.
They are hyperbolic by \autoref{as:*}.
The outgoing separatrixes of the saddle $I$ tend to the sink.
The closure of their union bounds a domain homeomorphic to a disc.
There is one source in this domain; the interior of this domain is the repulsion basin of this source.
The closure of this basin is a so called Cherry cell, see \autoref{fig:conn}.

The phase portrait of $v$ outside the polycycle $γ$ is constructed in a similar way.
The polycycle $γ$ is surrounded by a repelling hyperbolic limit cycle;
two incoming separatrixes of $E$ tend to this cycle in the negative time;
the domain bounded by these separatrixes is a Cherry cell, see \autoref{fig:conn}.
In the simplest case, this limit cycle is the limit cycle near infinity.
In other cases, the phase portrait outside this limit cycle may be more complicated;
in particular, it may include other polycycles homeomorphic to $γ$.

The interior of the “heart” part of the polycycle $γ$ (union of connections $LM$ and $ML$) contains one sink, one source and no limit cycles.
One outgoing separatrix of the saddle $M$ tends to the sink.
One incoming separatrix of $M$ emerges from the source.

The extension described above is in a sense a key example.
In what follows, we will describe only a part of the phase portraits of the vector fields considered, paying no attention to the extension of these parts to the whole sphere.
These extensions are analogous to the example described above.

\subsection{Generic unfoldings of vector fields of class \texorpdfstring{$\bfT$}{T}}

\subsubsection{Local families and unfoldings}\label{subsub:unfoldings}

In what follows, we deal with local families.

\begin{definition}
    \label{def:local}
    A \emph{local family} at $α = 0$ with the base $(ℝ^k, 0)$ is a germ on $\set{0} × M$ of a family given on $B × M$, $B ∋ 0$, $B$ is open.
    Two local families are \emph{moderately topologically equivalent} if they have locally moderately topologically equivalent representatives, and the corresponding homeomorphism of the bases maps $0$ to $0$.
\end{definition}

Denote by $\mcV_k^{loc}(S^2)$ the set of local $k$-parameter families of $C^3$ smooth vector fields in $S^2$.

\begin{definition}
    \label{def:unfol}
    An \emph{unfolding} of a vector field is a local family for which this field corresponds to the critical (zero) parameter value.
    We say that this family \emph{unfolds} the vector field.
\end{definition}

We will be mainly interested in generic unfoldings.

For a Banach submanifold $\bfT⊂\Vect(S^2)$, denote by $\bfT^\trans_k$ the set of $k$-parameter unfoldings of vector fields $v∈\bfT$ transversal to $\bfT$ at $v$.

\subsubsection{Genericity assumptions}\label{sub:gencond}

Consider an unperturbed vector field $v ∈ \bfT$.
Let $γ$ be the polycycle of the vector field $v$ of the type $TH$.
If $v$ has several polycycles homeomorphic to $γ$, we fix any of them, say $γ_j$, and let $γ = γ_j$.

Fix cross-sections $Γ_2$, $Γ_3$ to the arcs $LM$ and $ML$ of $γ$, respectively, and let $Γ_1 = Γ$, see \autoref{fig:conn}.
Let $O_j$ be the only intersection point of $γ$ with $Γ_j$, $O_1 = O$.
Fix coordinates $x_j\colon Γ_j→ℝ$ that orient $Γ_j$ from inside to outside of $γ$.

\begin{figure}
    \centering\includegraphics[scale=0.6]{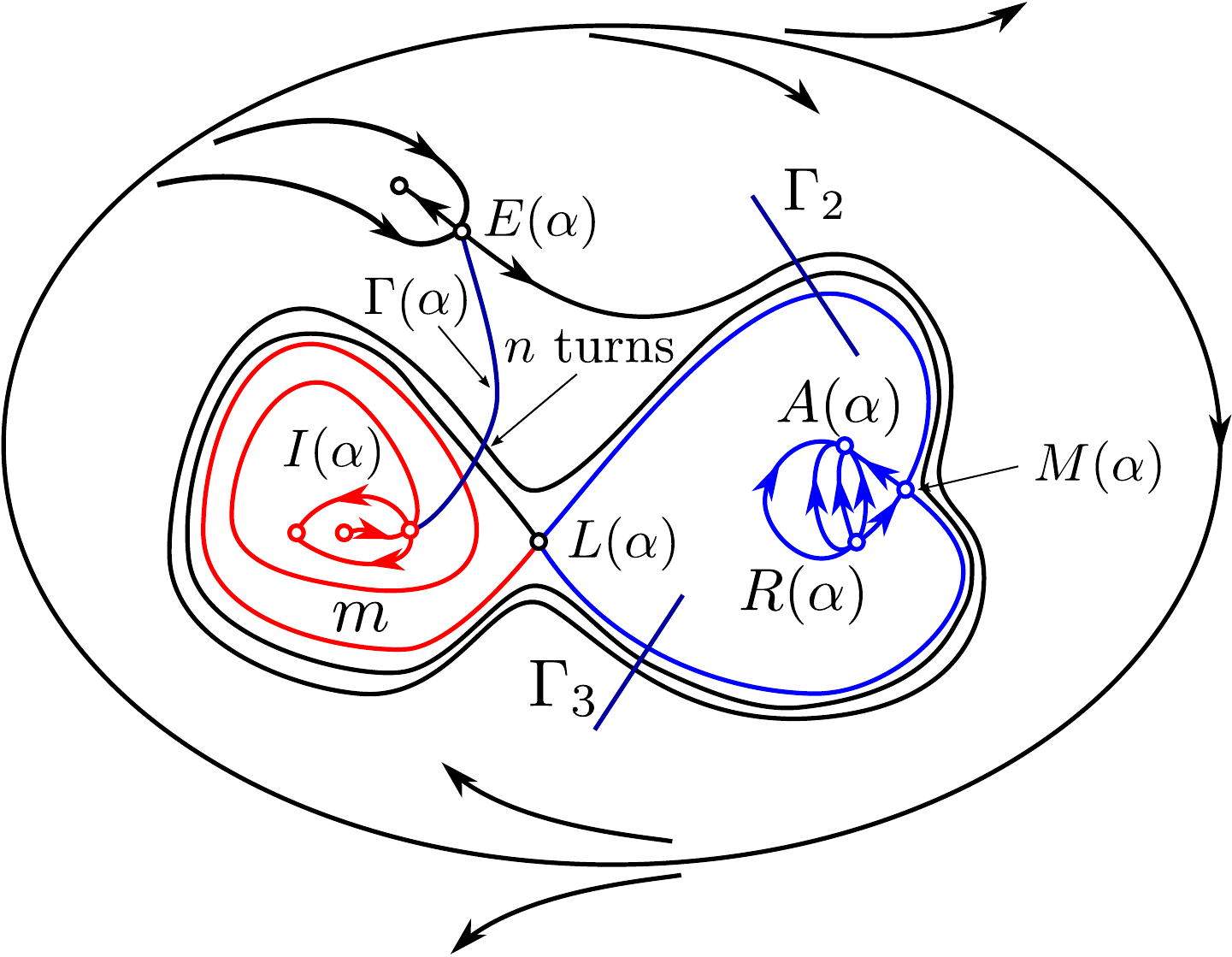}
    \caption{Sparkling saddle connections of an unfolding of $v$ and an extension of the phase portrait to the whole sphere}
    \label{fig:conn}
\end{figure}

Now consider a three parameter local family $V = \set{ v_α }$ that unfolds $v$.
Let $α∈(ℝ^3, 0)$ be the parameter, and $v_α$ be the corresponding vector field of the family, $v= v_0$.
For $α$ close to zero, the vector field $v_α$ has two saddles $L(α)$ and $M(α)$ smoothly depending on $α$, together with their separatrixes, $L(0) = L$, $M(0) = M$.

For the initial vector field, each point $O_j$ belongs to two separatrixes, one unstable and one stable.
For $α$ close to zero, let $U_j(α)$ and $S_j(α)$ be the first intersection points of the separatrixes of the saddles $L(α)$ and $M(α)$ with the cross-sections $Γ_j$;
$U$ corresponds to the unstable separatrixes, $S$ corresponds to the stable ones;
$S_j(0) = U_j(0) = O_j$.

Let us introduce \emph{separatrix splitting parameters} $σ_j$ well defined for small $α$:
\begin{equation}
    \label{eqn:sep-split}
    σ_j(α) = x_j(S_j(α))-x_j(U_j(α)).
\end{equation}
We also introduce the \emph{separatrix splitting map},
\begin{equation}
    \label{eqn:sepmap}
    σ(α)=(σ_1(α), σ_2(α), σ_3(α)).
\end{equation}
The only genericity assumption on the unfolding of $v$ is
\begin{equation}
    \label{eqn:deg-cond}
    \det \frac{∂σ}{∂α}(0)\ne 0.
\end{equation}
It is easy to check that this condition does not depend on the choice of $Γ_j$ and $x_j$.
Relation \eqref{eqn:deg-cond} allows us to take $σ$ as a new parameter of the local family $V$.
\begin{remark}
    Though $Γ_1=Γ$ depends on $α$ (see \autoref{subsub:detailed} below), we may and will assume that the intersection $Γ_1'$ of $Γ_1$ with a small neighborhood of $O$ does not depend on $α$, hence we can choose $x_1\colon Γ_1'→ℝ$, and define $σ_1(α)$ for $α$ small enough.
    Local cross sections $Γ_2$ and $Γ_3$ do not depend on $α$;
    hence, $σ_2(α)$ and $ σ_3(α)$ are well defined.
\end{remark}

\subsubsection{Class \texorpdfstring{$\bfT$}{T} and Banach submanifolds}
\label{subsub:ban}
Let us prove that the class $\bfT$ is a Banach submanifold in some neighborhood of any point $v∈\mathring\bfT$.
Such a vector field is a generic point in $\bfT$.
By definition of a Banach submanifold, it is sufficient to find a map $σ:(\Vect(S^2), v)→(ℝ^3, 0)$ such that $(\bfT, v)=σ^{-1}(0)$, and rank of $dσ(v)$ equals $3$.
The map $σ$ is given by \eqref{eqn:sepmap}.
Since $v$ has no saddle connections except for the edges of $γ$, and all singular points of $v$ are hyperbolic, the equation $σ=0$ actually defines $\bfT$ near $v$.
Next, the map $σ$ has a full rank since we may use a partition of unity to perturb one of $σ_j$ without touching the other two.
Note that relation \eqref{eqn:deg-cond} is equivalent to $V∈\bfT^\trans_3$.

Near the points $v$ corresponding to vector fields that have several polycycles homeomorphic to $γ$, say $γ_1$, \dots, $γ_D$, the set $\bfT$ is no more a manifold.
Let $v$ be such a field.
Denote by $\bfT_{γ_1}$ the set of those vector fields $w∈\bfT$ close to $v$ that have a polycycle $γ_1(w)$ homeomorphic to $γ$, continuously depending on $w$ and such that $γ_1(v)=γ_1$.
The set $\bfT_{γ_1}$ is a Banach manifold in some neighborhood of $v$.
The set $\bfT$ near $v$ is a union of the manifolds $\bfT_{γ_1}$, \dots, $\bfT_{γ_D}$, and has self-intersections.

\subsubsection{Sparkling saddle connections in a family of class \texorpdfstring{$\bfT^{\trans}_3$}{T\textasciicircum ⫛\_3}: a heuristic description}
\label{sub:sparkling}

Let a vector field $v∈\bfT$ have exactly one polycycle $γ$ of the type $TH$.
Let $V$ be a three-parameter unfolding of $v$ transversal to $\bfT$;
$σ = (σ_1, σ_2, σ_3)$ be the parameter of this family defined above.

Consider the line $ℰ = \set{σ_2 = σ_3 = 0}$ in the parameter space.
It is parametrized by $σ_1 $, and corresponds to vector fields $v_α$ with the two connections between $L(α)$ and $M(α)$ unbroken.
The parameter $σ_1$ is special; let us redenote it by $ε$.

Let us give a heuristic description of some bifurcations in the family $V$ that occur as the parameter~$α$ changes along $ℰ$.
Let $I(α), E(α)$ be hyperbolic saddles of $v_{α}$ smoothly depending on $α$, $I(0) = I$, $E(0) = E$.
There are two sequences of sparkling saddle connections:
\begin{description}
    \item[exterior connection:]
        the outgoing separatrix $W^u_E(α)$ of the saddle $E(α)$ makes $n$ turns (the precise meaning of this term is defined below) around the whole polycycle $γ$,
        then coincides with the incoming separatrix of the saddle $L(α)$;

    \item[interior connection:]
        the incoming separatrix $W^s_I(α)$ of the saddle $I(α)$ makes $m$ turns along $l$ in the backward time,
        then coincides with the outgoing separatrix of the saddle $L(α)$.
\end{description}

These connections (that do not in general occur simultaneously) are shown in~\autoref{fig:conn}.
For simplicity, they are shown in one and the same figure.

\subsubsection{Sparkling saddle connections in a family of class \texorpdfstring{$\bfT^{\trans}_3$}{T\textasciicircum ⫛₃}: a detailed description}\label{subsub:detailed}

Let us describe in more detail the bifurcations in the family $V$ that occur when the parameter changes along the line $ℰ$.
Let $Γ(α)$ be a smooth family of arcs connecting $I(α)$ with $ E(α)$ and transversal to $v_{α}$, $Γ (0) = Γ $.
\begin{definition}
    \label{def:n-turns-numer}
    We say that an exterior (resp., an interior) saddle connection $ E(α) L (α)$ (resp., $L(α)I(α)$) \emph{makes $n$ turns} around $γ$ (resp., $m$ turns around $l$) provided that it intersects the curve $ Γ (α)$ with endpoints excluded at exactly $n$ points (resp., $m$ points).
\end{definition}
\begin{remark}
    The number of turns \emph{is not} an invariant of the orbital topological classification of vector fields on the two-sphere.
    Indeed, $n$ and $m$ depend not only on $v_α$, but also on the choice of $Γ(α)$.
\end{remark}

We fix the family $Γ(α)$, and thus speak about the number of turns.

The following lemma describes the asymptotic behavior of~the corresponding values of the parameter $ε$ near zero.
\begin{lemma}
    [Asymptotic Lemma]
    \label{lem:asym}
    For $m$ and $n$ sufficiently large the following holds.
    There exists only one value $ε=i_m$ (respectively, $ε=e_n$) that corresponds to a vector field with an interior (respectively, an exterior) sparkling separatrix connection with $m$ (respectively, $n$) turns.
    The sequences $i_m$ and $e_n$ satisfy the asymptotic equalities
    \begin{align}
        \label{eqn:im}
        \log (-\log i_m) &= m\log λ^{-1} + O(1),\\
        \label{eqn:en}
        \log (-\log e_n) &= n\log(λ^2μ)+ O(1),
    \end{align}
    and are monotonically decreasing.
\end{lemma}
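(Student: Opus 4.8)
To prove the lemma I would reduce both asymptotic equalities to a single iteration estimate for the Dulac (monodromy) maps of hyperbolic saddles, applied to the loop $l$ in the interior case and to the polycycle $γ$ in the exterior case. Throughout, the parameter runs along the line $\mcE=\set{σ_2=σ_3=0}$, on which $σ_1$ equals $ε$, and all the analysis is localized near the cross section $Γ$. On $Γ^+$ the field $v_ε$ carries an exterior monodromy $Δ_γ^ε$, defined for the sign of $ε$ that produces exterior connections; for $x$ small the forward orbit of a point of $Γ^+$ no longer goes around $γ$ but threads the gap left by the broken loop $l$ and lands near the point $S(ε)$ at which the stable separatrix of $L(ε)$ that issued from $l$ first meets $Γ$. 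Since $W^u_E(ε)$ winds onto $γ$ in forward time, its successive intersections with $Γ^+$ are $x_0(ε)=b(ε),x_1(ε),\dots,x_{n-1}(ε)$ with $b(ε)=O(1)$ and $x_{k+1}=Δ_γ^ε(x_k)$, and a saddle connection $E(ε)L(ε)$ with exactly $n$ turns occurs precisely when $x_{n-1}(ε)=S(ε)$, i.e.\ when $(Δ_γ^ε)^{n-1}(b(ε))=S(ε)$. Symmetrically, since $W^s_I$ winds onto $l$ in negative time, an interior connection $L(ε)I(ε)$ with $m$ turns occurs precisely when $(Δ_l^ε)^{-(m-1)}(c(ε))=U(ε)$, where $U(ε)$ is the first meeting point with $Γ$ of the unstable separatrix of $L(ε)$ issued from $l$, and $c(ε)=O(1)$. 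As $x_1(S(ε))-x_1(U(ε))=σ_1=ε$, one gets $S(ε)\asymp ε$ and $U(ε)\asymp ε$ (after, if necessary, replacing $Γ_1$ by an $α$-dependent cross section that pins one separatrix of $L$); so computing $e_n$ and $i_m$ boils down to iterating $Δ_γ^ε$, resp.\ $(Δ_l^ε)^{-1}$, from an $O(1)$ starting point.

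For the iteration I would invoke the classical asymptotics of Dulac maps of hyperbolic saddles and of their compositions along hyperbolic polycycles (see \cite{cent}): since $γ$ passes twice through $L$ and once through $M$, $Δ_γ^0(x)=Cx^{λ^2μ}(1+o(1))$, and since $l$ passes once through $L$, $(Δ_l^0)^{-1}(y)=C'y^{1/λ}(1+o(1))$, with the error terms and the derivative of $\log Δ$ bounded uniformly on a one-sided neighbourhood of $O$ and uniformly in $ε$ near $0$. The $ε$-dependence of these monodromy maps is relevant only over the last $O(1)$ of the $n$ iterations (where the orbit has descended to the scale of $ε$ itself) and is absorbed into the $O(1)$ term below. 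Put $ρ=λ^2μ$ in the exterior case and $ρ=λ^{-1}$ in the interior case — both exceed $1$ by \eqref{eqn:char}, which is exactly what makes the winding and the contraction possible. Writing $y_k=-\log x_k$, each application of the relevant monodromy becomes $y_{k+1}=ρ\,y_k+O(1)$, so $y_{n-1}=ρ^{n-1}\bigl(y_0+O(1)\bigr)=ρ^{n-1}\cdot O(1)$, since $y_0=-\log b(ε)$ is $O(1)$ and bounded away from $0$; therefore $\log y_{n-1}=n\log ρ+O(1)$. Together with $S(e_n)\asymp e_n$ this is \eqref{eqn:en}, and the interior computation gives \eqref{eqn:im}.

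For uniqueness and monotonicity, note that for $n$ (resp.\ $m$) large the whole process takes place in a fixed small neighbourhood of $γ$ (resp.\ $l$), inside the linearizing charts of $L$ and $M$, where $Δ_γ^ε$, the threading map, and the positions $b(ε)$ and $S(ε)$ are strictly monotone in $x$ and monotone in $ε$; hence the scalar equation $(Δ_γ^ε)^{n-1}(b(ε))=S(ε)$ is strictly monotone in $ε$ and has a single root $e_n$, and since $(Δ_γ^0)^{n-1}(b(0))$ strictly decreases in $n$ while $S(\cdot)$ increases, $e_n$ strictly decreases in $n$; the interior case is identical. The hypothesis \eqref{eqn:mon} — that $Δ_γ$ and $Δ_l^{-1}$ extend to $Γ^\pm$ with $O$ as their only fixed point — is precisely what keeps these monotonicities from degenerating and makes the turn count unambiguous.

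The genuine difficulty, I expect, is entirely technical and of two kinds. First, making the reduction in the first paragraph rigorous: a clean description of the threading map, the exact match between \enquote{$n$ turns} and \enquote{$n-1$ applications of the monodromy plus one threading}, and the check that the extension of $v$ to the whole sphere affects $e_n$ and $i_m$ only through the $O(1)$ term. Second, propagating the Dulac-map error through $n$ compositions so that it stays $O(1)$ \emph{after} the outer logarithm, which forces one to control the monodromy maps beyond their leading term, with estimates uniform both in the base point and in $ε$.
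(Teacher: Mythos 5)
Your proposal follows essentially the same route as the paper: reduce the turn count to a connection equation equating $n-1$ iterates of the parameter-dependent polycycle monodromy with an $O(1)$ quantity, use the uniform $\Theta(x^{\rho})$ behaviour of Dulac maps and their compositions (the paper proves this from scratch for $C^3$ families rather than citing it) to turn each iterate into $y\mapsto \rho y+O(1)$ in the variable $y=-\log x$, and obtain uniqueness and monotonicity from monotonicity of the iterated map for $n$ large. The only notable differences are that the paper treats the exterior case by time reversal rather than directly, and that it makes your monotonicity-in-$\varepsilon$ claim quantitative — the $\varepsilon$-derivative of the iterated monodromy is shown to exceed $2^{n-1}$ and hence to dominate the bounded drift of the endpoints, which is the step your ``hence the scalar equation is strictly monotone'' glosses over.
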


\begin{remark}
    The choice of the cross section $Γ$ and the family $Γ(α)$ does not affect the statement of the lemma:
    another choice may change the number of turns by a constant term only.
\end{remark}

\begin{remark}
    In fact, for a sufficiently smooth family of~vector fields, $O(1)$ in~\eqref{eqn:im} and~\eqref{eqn:en} can be replaced by $a+r_m$ and $b+\rho_n$, respectively, where $a$ and $b$ are constants and $r_m$, and $\rho_n$ tend to $0$ exponentially.
    We do not need this fact for our analysis.
    Its proof requires more technical details, so we will skip it.
\end{remark}

\autoref{lem:asym} is the most technical part of the proof.
We postpone its proof to \autoref{sec:aux}.
Let us deduce the theorem from it.

\subsection{Two equivalent families of class \texorpdfstring{$\bfT^\trans_3$}{T\textasciicircum ⫛₃}}\label{sub:twoequiv}
Begin with some notes about the real line.

\subsubsection{Topology of two sequences in the line}
\label{subsub:relative-density}
Parameter values $e_n$ corresponding to exterior sparkling saddle connections and $i_m$ corresponding to interior ones form two decreasing sequences converging to $0$ from above.
As shown below, for two equivalent families of class $\bfT^\trans_3$ there exists a homeomorphism $(ℝ_+,0) →(ℝ_+,0)'$ of neighborhoods of zero on the real line $ℰ$ parametrized by $ε$ that brings the couple of sequences $(i_m)$, $(e_n)$ corresponding to one family to the couple of analogous sequences corresponding to another one.

\begin{definition}
    \label{def:coup-eq}
    Two couples of bounded countable sets $A$, $B$ and $\t A$, $\t B$ in $ℝ_+$ with the only accumulation point zero are called \emph{equivalent} provided that there exists a germ of a homeomorphism $h: (ℝ_+,0) →(ℝ_+,0)$ that links the germs of the corresponding sets at zero:
    \begin{align*}
        h(A,0) &= (\t A,0), &h(B,0) &= (\t B,0).
    \end{align*}
\end{definition}

Equivalence classes of pairs thus defined may have a lot of topological invariants.
We present here only one called \emph{relative density}.
Enumerate the elements of each of the countable sets in the definition above in a monotonic decreasing order.
Let $N_A(x)$ be the \emph{counting function}
\[
    N_A(x) = \# \set{a∈A|a≥x}.
\]
In the same way the counting function is defined for the other sets $B$, $\t A$, $\t B$.
The \emph{relative density} of two sets $A$ and $B$ is the limit (if exists):
\[
    ν(A,B) = \lim_{x→0} \frac {N_A(x)}{N_B(x)}.
\]
Clearly, $ν(A, B)$ depends only on the germs of $A$ and $B$ at zero, and $ν(A, B)=ν(\t A, \t B)$ provided that $(A, B)$ is equivalent to $(\t A, \t B)$.

\autoref{lem:asym} motivates the following definition
\begin{definition}
    A set $A⊂ℝ$ is called a \emph{quasi (arithmetic) progression} with the difference $δ_A$ if its germ at infinity has the form
    \[
        (A, +∞)=\set{δ_Ak+O(1)|k∈ℕ}.
    \]
\end{definition}

\begin{lemma}
    \label{lem:dens}
    Let $A, B⊂ℝ_+$ be two countable bounded sets with the only accumulation point at zero.
    Suppose that there exists a homeomorphism $ℒ\colon (ℝ_+, 0)→(ℝ_+, +∞)$ such that $ℒ(A)$ and $ℒ(B)$ are quasi progressions with differences $δ_A$ and $δ_B$, respectively.
    Then the relative density $ν(A, B)$ exists and is given by
    \[
        ν(A, B) = \frac {δ_B}{δ_A}.
    \]
\end{lemma}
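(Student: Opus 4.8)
The plan is to push everything through the homeomorphism $ℒ$, turning the counting functions near $0$ into counting functions near $+∞$, where the quasi progression hypothesis is designed to control them.

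First I would reduce to the situation in which $ℒ$ is an honest homeomorphism. Since the relative density depends only on the germs of $A$ and $B$ at $0$, and $ℒ$ is represented by a homeomorphism of some $(0,ε_0)$ onto some $(T_0,+∞)$, we may replace $A$ by $A∩(0,ε_0)$ and $B$ by $B∩(0,ε_0)$; this shifts $N_A$ and $N_B$ by additive constants, which is harmless for the limit of their ratio. Such a homeomorphism is automatically strictly \emph{decreasing}, because it is strictly monotone and sends $0$ to $+∞$. Hence for every sufficiently small $x>0$ and every $a∈A$ we have $a≥x$ if and only if $ℒ(a)≤ℒ(x)$, so
\[
    N_A(x)=\#\set{a'∈ℒ(A)|a'≤ℒ(x)},
\]
and likewise for $B$. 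In other words, after the substitution $t=ℒ(x)$ (so that $t→+∞$ as $x→0$), the quantities $N_A(x)$ and $N_B(x)$ are the counting functions from below of the sets $ℒ(A)$ and $ℒ(B)$ evaluated at $t$.

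Next I would establish the elementary estimate: for any quasi progression $A'⊂ℝ_+$ with difference $δ$,
\[
    \#\set{a'∈A'|a'≤t}=\frac{t}{δ}+O(1)\qquad(t→+∞).
\]
Indeed, by the definition of a quasi progression, after discarding finitely many points we may list $A'$ in increasing order as $a'_1<a'_2<\dots$ with $|a'_k-δk|≤C$ for all $k$ and a fixed constant $C$. Then $a'_k≤t$ holds whenever $δk≤t-C$ and fails whenever $δk>t+C$, so counting the admissible indices $k$ gives $\frac{t-C}{δ}-1≤\#\set{a'∈A'|a'≤t}≤\frac{t+C}{δ}+O(1)$, which is the claim.

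Applying this estimate to $A'=ℒ(A)$ and $A'=ℒ(B)$, which are quasi progressions with differences $δ_A$ and $δ_B$, and combining with the first step, we obtain, with $t=ℒ(x)$,
\[
    N_A(x)=\frac{t}{δ_A}+O(1),\qquad N_B(x)=\frac{t}{δ_B}+O(1),
\]
both tending to $+∞$ as $x→0$. Dividing the numerator and denominator of the quotient $N_A(x)/N_B(x)$ by $t$ (legitimate because $1/δ_B>0$ keeps the denominator bounded away from zero for small $x$) and letting $x→0$ yields
\[
    ν(A,B)=\lim_{x→0}\frac{N_A(x)}{N_B(x)}=\lim_{t→+∞}\frac{1/δ_A+O(1/t)}{1/δ_B+O(1/t)}=\frac{1/δ_A}{1/δ_B}=\frac{δ_B}{δ_A},
\]
so the limit exists and equals the asserted value. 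The point that needs the most care is the second step: the informal $O(1)$ in the definition of a quasi progression must be interpreted as a genuine uniform bound $C$, so that it carries over cleanly to the counting function. The order reversal effected by $ℒ$ and the concluding limit computation are entirely routine.
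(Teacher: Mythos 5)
Your proof is correct; the paper itself dismisses this lemma with ``The proof is obvious'' and supplies no argument, and what you have written is precisely the natural argument being alluded to (transport the counting functions through the order-reversing homeomorphism $ℒ$ and use the uniform $O(1)$ bound in the definition of a quasi progression to get $N_A(x)=ℒ(x)/δ_A+O(1)$). No gaps; your care about interpreting the $O(1)$ as a uniform constant is exactly the point that makes the ``obvious'' proof rigorous.
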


The proof is obvious.
This lemma together with \autoref{lem:asym} immediately implies the following statement.
\begin{corollary}
    \label{cor:spar}
    The sequences $(e_n)$, $(i_m)$ introduced in \autoref{lem:asym} have the relative density
    \begin{equation}
        \label{eqn:Ni-over-Ne}
        ν\left( (e_n), (i_m) \right)=\frac{-\log λ}{\log λ^2μ}.
    \end{equation}
\end{corollary}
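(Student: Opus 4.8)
The plan is to obtain \autoref{cor:spar} by feeding the asymptotics of \autoref{lem:asym} into \autoref{lem:dens}. The key observation is that the map $\mathcal{L}(\varepsilon) = \log(-\log\varepsilon)$, restricted to a small punctured neighbourhood of $0$, is a germ of a (necessarily decreasing) homeomorphism $(\mathbb{R}_+, 0) \to (\mathbb{R}_+, +\infty)$: indeed $\varepsilon \mapsto -\log\varepsilon$ sends $(0, e^{-1})$ decreasingly onto $(1, +\infty)$, and $t \mapsto \log t$ sends $(1, +\infty)$ increasingly onto $(0, +\infty)$. This is precisely the kind of reparametrization required by \autoref{lem:dens}.

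First I would check that $\mathcal{L}$ converts each of the two sequences of \autoref{lem:asym} into a quasi-arithmetic progression. By \eqref{eqn:im}, $\mathcal{L}(i_m) = m\log\lambda^{-1} + O(1)$; the difference $\delta_{(i_m)} := \log\lambda^{-1} = -\log\lambda$ is positive because $\lambda < 1$ by \eqref{eqn:char}, and since the $i_m$ are monotonically decreasing, the numbers $\mathcal{L}(i_m)$ are monotonically increasing and exhaust the germ at $+\infty$ of $\mathcal{L}(\{i_m\})$, so $\mathcal{L}(\{i_m\})$ is a quasi-progression with difference $-\log\lambda$. Likewise, by \eqref{eqn:en} together with $\lambda^2\mu > 1$ from \eqref{eqn:char}, the set $\mathcal{L}(\{e_n\})$ is a quasi-progression with positive difference $\delta_{(e_n)} := \log(\lambda^2\mu)$.

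Then \autoref{lem:dens}, applied with $A = (e_n)$ and $B = (i_m)$, yields directly
\[
    \nu\bigl((e_n),(i_m)\bigr) = \frac{\delta_{(i_m)}}{\delta_{(e_n)}} = \frac{-\log\lambda}{\log(\lambda^2\mu)},
\]
which is exactly \eqref{eqn:Ni-over-Ne}. This finishes the deduction.

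I do not expect a serious obstacle here: all the real work is carried by the Asymptotic Lemma (whose proof is deferred to \autoref{sec:aux}) and by the elementary \autoref{lem:dens}. The only points needing a line of care are that $\mathcal{L}$ is genuinely a germ of a homeomorphism onto a neighbourhood of $+\infty$, and that the monotonicity of $(i_m)$ and $(e_n)$ makes the images honest quasi-progressions correctly indexed by $\mathbb{N}$ (although for the relative-density computation it would already suffice that their counting functions grow like $T/\delta + O(1)$).
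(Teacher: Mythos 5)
Your deduction is correct and is exactly the paper's argument: the authors state that \autoref{lem:dens} together with \autoref{lem:asym} ``immediately implies'' the corollary, the implicit reparametrization being precisely $ℒ(ε)=\log(-\log ε)$, which turns \eqref{eqn:im} and \eqref{eqn:en} into quasi-progressions with differences $-\log λ>0$ and $\log(λ^2μ)>0$. Your extra care about $ℒ$ being a germ of a homeomorphism onto a neighbourhood of $+∞$ and about the indexing is a welcome but minor elaboration of what the paper leaves tacit.
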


\subsubsection{A topologically distinguished one-parameter subfamily}

Let $V$ and $ℰ$ be the same as in \autoref{sub:sparkling}.
We will describe $ℰ$ in geometric terms.

\begin{proposition}
    \label{prop:dist}
    Let $L$ and $M$ be two hyperbolic saddles of a $C^1$-smooth vector field $v_0$ on $S^2$.
    Suppose that $v_0$ has a saddle connection $l$ between $L$ and $M$.
    Let $σ_l$ be a separatrix splitting parameter $v ↦ σ_l(v)$ defined for the vector fields $C^1$-close to $v_0$ as it was done in \autoref{sub:gencond}, see \eqref{eqn:sep-split}.
    Then there exists a neighborhood $U$ of $l$ in $S^2$and a neighborhood $W$ of $v_0$ in $\Vect(S^2)$ such that if a vector field $v ∈ U$ has a saddle connection that belongs to $U$ then $σ_l(v) = 0$.
    The proposition holds true if we replace $\Vect(S^2)$ by a family that unfolds $v_0$, and $W$ is a neighborhood of 0 in the base of the family.
\end{proposition}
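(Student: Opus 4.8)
The plan is to reduce the proposition to a purely local statement about the saddle connection $l$: for a suitable neighborhood $U$ of $\overline l$ and a suitable neighborhood $W$ of $v_0$, the \emph{only} saddle connection of any $v\in W$ whose closure lies in $U$ is the ``continuation of $l$'', that is, the orbit obtained by prolonging the unstable separatrix of $L(v)$ that leaves $L(v)$ along $l$. The point of this reduction is that the existence of such a continuation is exactly what $\sigma_l(v)=0$ records: $\sigma_l(v)=0$ simply asserts that the unstable separatrix of $L(v)$ going out along $l$ and the stable separatrix of $M(v)$ coming in along $l$ coincide, a property manifestly independent of the auxiliary cross-section and coordinate used to define $\sigma_l$ in \eqref{eqn:sep-split}.

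First I would build the neighborhood. Since $L,M$ are hyperbolic, for $v$ that is $C^1$-close to $v_0$ they persist as hyperbolic saddles $L(v),M(v)$ whose local stable and unstable manifolds depend continuously on $v$; I choose small ``box'' neighborhoods $U_L\ni L$, $U_M\ni M$ on which the local picture is the standard saddle picture, uniformly for $v$ in some $W$, and small enough that $\overline{U_L}\cup\overline{U_M}$ contains no singular point of $v_0$ besides $L,M$. Let $l_0$ be the compact sub-arc of $l$ between $U_L$ and $U_M$; since $v_0$ is nonsingular along $l_0$, I cover $l_0$ by a genuine flow box $U_{\mathrm{mid}}$, again persisting for $v\in W$, chosen to contain (a sub-arc of) the cross-section defining $\sigma_l$, crossed exactly once by each orbit. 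Set $U=U_L\cup U_{\mathrm{mid}}\cup U_M$ and arrange, shrinking everything, that inside $U$ the flow box meets $U_L$ only along a neighborhood of the unstable branch of $L$ running along $l$, meets $U_M$ only along a neighborhood of the stable branch of $M$ running along $l$, and that each of the three remaining local separatrix branches at $L$ and at $M$ leaves $U$ in bounded time. All of these are open conditions, hence hold uniformly for $v\in W$.

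Next I would run the case analysis. Fix $v\in W$ with a saddle connection $l'$, $\overline{l'}\subset U$. Its $\alpha$- and $\omega$-limit sets are singular points of $v$ in $\overline U$, hence equal to points of $\{L(v),M(v)\}$, and $l'$ lies in the unstable manifold of the first and the stable manifold of the second. The orbit $l'$ is complete and stays in $U$ forever; a flow box contains no complete orbit, so $l'$ must enter $U_L\cup U_M$ both forward and backward, passing between the flow box and the saddle boxes only through the two designated ``windows''. A loop at $L(v)$ is then impossible: leaving $L(v)$ it would either go along a non-$l$ branch and hence out of $U$, or along the $l$-branch into $U_{\mathrm{mid}}$, whose flow is one-directional, delivering it into $U_M$ with no return to $U_{\mathrm{mid}}$ and hence no return to $L(v)$. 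By time reversal there is no loop at $M(v)$, and there is no $M(v)\to L(v)$ connection, since both unstable branches of $M(v)$ leave $U$. The only remaining possibility is an $L(v)\to M(v)$ connection leaving $L(v)$ along the $l$-branch and reaching $M(v)$ along its $l$-branch, i.e. $l'$ is the continuation of $l$; tracing $l'$ through $U_{\mathrm{mid}}$ it meets the cross-section in a single point that is simultaneously the first intersection with it of the unstable separatrix of $L(v)$ and of the stable separatrix of $M(v)$, whence $\sigma_l(v)=0$. The family version is immediate: pull $W$ back along $\alpha\mapsto v_\alpha$.

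I expect the main obstacle to be the construction of the decomposition $U=U_L\cup U_{\mathrm{mid}}\cup U_M$ together with the verification that the ``escaping branch'' and ``one-directional flow box'' properties are genuinely robust — that they survive uniformly for all $v$ in a single neighborhood $W$. This rests on compactness of $\overline{l_0}$ and on $C^1$-continuity of the time-$t$ flow and of local invariant manifolds of hyperbolic saddles; all standard, but it must be assembled with care. The subsequent case analysis is then essentially combinatorial.
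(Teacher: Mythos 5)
Your argument is correct; the paper gives no proof at all (it simply declares the proposition trivial), and your decomposition of $U$ into two saddle boxes joined by a flow box, followed by the case analysis ruling out every connection except the continuation of $l$, is exactly the standard elaboration of that claim. The only point worth underlining (which the paper itself stresses right after the statement) is that the hypothesis that the connection stays inside $U$ is what makes the argument work --- without it the proposition is false because of sparkling saddle connections --- and your case analysis invokes this hypothesis in precisely the right places.
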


The proof is trivial.
Yet the proposition becomes wrong if we omit the requirement that the connection between the saddles $L(v)$ and $M(v)$ close to $L$ and $M$ is close to the original connection itself.
This happens due to sparkling saddle connections.

By \autoref{prop:dist}, the family $ℰ$ is characterized by the following condition:
for $σ$ small, the vector field $v_σ$ has a polycycle “heart” with the edges close to the saddle connections $LM$ and $ML$ of the vector field $v_0$ if and only if $σ_2 = σ_3 = 0$, that is, $σ ∈ ℰ$.

\subsubsection{Numeric invariants of families of class \texorpdfstring{$\bfT^{\trans}_3$}{T\textasciicircum ⫛₃}} \label{subsub:num}
Let $V$ and $\t V$ be two families as in \autoref{thm:unst1}.
Let $H = (h, H_{α})$ be a moderate topological equivalence~\eqref{eqn:conj} that links the families $V$ and $\t V$, $h$ be the corresponding map of the parameter spaces.
Let us shift coordinates in the bases of $V$ and $\t V$ so that $v_0$, $\t v_0$ are unique points of $V∩\bfT$ and $\t V∩\bfT$, respectively.
\autoref{prop:bfT-dist} implies that $h(0)=0$.

In this subsection, redenote by $V$, $\t V$ the \emph{local} families $(V, v)$, $(\t V, \t v)$ parametrized by $σ$ and $\t σ$.
Let $\t ℰ$ be the local subfamily of $\t V$, analogouse to $ℰ⊂V$, i.e. given by equations $\t σ_2=\t σ_3=0$.
We want to prove that $h(ℰ)=\t ℰ$.

Let $γ$ be the polycycle of $v_0$ of the type $TH$, $L$ and $M$ be its vertexes.
Due to \autoref{prop:bfT-dist}, $\t γ = H_0(γ)$ is a hyperbolic polycycle of $\t v_0$ with vertexes $\t L=H_0(L)$, $\t M=H_0(M)$.
Let $U$ and $W$ be the neighborhoods of $\t L\t M$ in $S^2$ and $0$ in $ℝ^3$ from \autoref{prop:dist}.
If $σ_2 = 0$, then the vector field $v_σ$ has a saddle connection $L(σ) M(σ)$ close to $LM$.
As $H$ is moderate, the saddle connections $\t L(σ) \t M(σ)=H_σ(L(σ)M(σ))$ belong to $U$ for $σ$ small, and $h(σ)$ belongs to $W$.
By \autoref{prop:dist}, $\t σ_2 (h (σ)) = 0$.
Same arguments imply that if $σ_3 (σ)= 0$ then $ \t σ_3 (h(σ) ) = 0$ for $σ$ small.
Hence, $h (ℰ) = \t ℰ$.

Let $ε = σ_1$, $\t ε = \t σ_1$ be the charts on $ℰ$, $\t ℰ$ respectively, and $ψ:(ℝ, 0)→(ℝ, 0)$ be the restriction $h|_ℰ$ written in the charts $ε$, $\t ε$.
It is a homeomorphism.
Let $\t E = H_0 (E)$.
By \autoref{prop:bfT-dist}, $\t E$ is the unique saddle whose unstable separatrix winds towards the polycycle $\t γ$.
Let $(\t e_n) $ be the sequence of the parameter values $\t ε$ for which the vector field $\t v_σ$ has a sparkling saddle connection between $\t E(\t σ)$ and $\t L(\t σ)$ for $\t σ = (\t ε, 0,0)$.
The phase portraits of the vector fields $v_σ$ and $\t v_{\t σ}$, $\t σ=h(σ)$, are linked by a homeomorphism.
Hence, they simultaneously have or have not sparkling saddle connections between $E(σ)$, $L(σ)$ and $\t E(\t σ)$, $\t L(\t σ)$.
Thus $ψ$ sends the sequence $(e_n)$ to the sequence $(\t e_n)$, possibly shifting the numeration.
In the same way, the sequence $(\t i_m)$ is defined, and the relation $ψ (i_m) = \t i_{m-a}$ for some $a$ is proved.

Due to \autoref{cor:spar}, the relative densities $ν\left( (e_n), (i_m) \right)$ and $ν\left( (\t e_n), (\t i_m) \right)$ exist and are given by \eqref{eqn:Ni-over-Ne}.
Since the pairs of sequences $\left( (e_n), (i_m) \right)$ and $\left( (\t e_n), (\t i_m) \right)$ are equivalent in sense of \autoref{def:coup-eq}, their relative densities coincide.
Hence,
\[
    \frac{\log λ^{-1}}{\log λ^2μ}
    =
    \frac{\log \t λ^{-1}}{\log\t λ^2\t μ}.
\]
\autoref{thm:unst1}, modulo \autoref{lem:asym}, is proved.
\section{Functional invariants}\label{sec:finv}

In this section we construct an open set of families having functional invariants;
in particular, we prove Theorems \ref{thm:func-unst} and~\ref{thm:fuinv}, modulo \autoref{lem:asym1} stated below.
This lemma is an analog of \autoref{lem:asym}.
It is proved in \autoref{sec:aux}.

\subsection{Heuristic description of functional invariants}
\label{sub:heuristic-func}

Consider the class $\bfT$ described in \autoref{sub:unperturbed}.
Choose a vector field $v$ of this class, and a polycycle $γ$ of $v$ of the class $TH$;
if it is not unique, we fix one arbitrary.
For vector fields $w$ close to $v$ on $\bfT_γ$, the following objects are defined:
\begin{itemize}
    \item two saddles $L(w)$ and $M(w)$;
    \item their characteristic numbers $λ (w)$ and $μ (w)$;
    \item the function $ν_γ:(\bfT_γ, v)→(ℝ_+, ν_γ(v))$ given by
    \[
        ν_γ(w) = \frac {-\log λ(w)}{\log λ^2(w)μ (w)},
    \]
    cf. \eqref{eqn:unst1}.
\end{itemize}

Class $\bfT_γ$ has codimension $3$.
Consider the class $\left( \bfT_γ \right)^\trans_4$, see \autoref{subsub:unfoldings}.
A family $V$ of class $\left( \bfT_γ \right)^\trans_4$ intersects the class $\bfT_γ$ at a one-parameter family $J_V$.
For a generic family $V∈\left( \bfT_γ \right)^\trans_4$, the function $ν|_{J_V}$ is a \emph{natural parameter} on $J_V$.

Consider now a class $\bfT^2⊂\bfT$ of vector fields that contain two polycycles $γ_1$, $γ_2$ of the type $TH$.
Let $\bfT_{γ_1} ⊂ \bfT$ and $\bfT_{γ_2} ⊂ \bfT$ be Banach manifolds described in \autoref{subsub:ban}.
Then $\bfT^2 = \bfT_{γ_1} \cap \bfT_{γ_2} ⊂ \bfT$.

As it was mentioned above, the objects $L_j(v), M_j(v), λ_j(v), μ_j(v)$ and $ν_j(v)$ are well defined on the manifold $\bfT_{γ_j}$, $j = 1, 2$.
The class $\bfT^2$ has codimension six.
There are two functions $ν_j=ν_{γ_j}$ defined on the manifolds $\bfT_{γ_j}$.

Consider the class $\left(\bfT^2\right)^\trans_7$.
A family $V∈\left(\bfT^2\right)^\trans_7$ again intersects the class $\bfT^2$ by a one-parameter family $J_V$.
Now, there are two natural parameters on $J_V$:
restrictions to $J_V$ of the functions $ν_1$ and $ν_2$.
One natural parameter is a function of another one.
This is a functional invariant of the moderate topological classification of families of class $\left(\bfT^2\right)^\trans_7$.

In a similar way, increasing the order of degeneracy, one can construct classes of vector fields with $D$ functions like $ν(v)$, and consider families that intersect this class by $d$-dimensional subfamilies, thus obtaining functional invariant of the form of a germ of a map $(ℝ^d_+,a)→(ℝ^{d'}_+, b)$, $d' = D - d$, $a∈ℝ_+^d$, $b∈ℝ_+^{d'}$.

The latter conclusion needs to be justified.
We pass to the rigorous presentation and start with a notion of a function invariant under the moderate topological classification of local families.
In what follows, we construct six (not seven) parameter families with functional invariants;
the constructions are a little bit more tricky than above.
Notation $\bfT^2$ will not be used any more.

\subsection{Families with invariant functions: definition}
\label{sub:inv-functions}

Let $d$, $d'$ be arbitrary natural numbers; later on they will be the same as in \autoref{thm:fuinv}.
Put $D=d+d'$.
As in \autoref{sec:family}, we shall construct a Banach submanifold $\bfT_D$, then consider the class $\left(\bfT_D\right)^\trans_k$ of local $k$-parameter families transversal to this submanifold.
The main difference with \autoref{sec:family} is that now the codimension of $\bfT_D$ is less than the number of parameters in the family,
\[
    \codim \bfT_D=2D+1<2D+1+d=k.
\]
Therefore, each family $V$ with $k>\codim \bfT_D$ includes a subfamily $J_V=V∩\bfT_D$ with ${k-\codim \bfT_D=d}$ parameters.

\begin{definition}
    \label{def:inv}
    Let $T$ be a Banach submanifold in some open subset of $\Vect(S^2)$, $k$ be an integer number, $k≥\codim T$.
    A function $φ:T→ℝ$ is an \emph{invariant function} of the moderate topological classification of families of class $T^\trans_k$ provided that the following holds.
    Let two families $V, \t V∈T^\trans_k$ with the bases $B$, $\t B$ be moderately topologically equivalent.
    Consider the subfamilies $J_V=V∩T$, $J_{\t V}=\t V∩T$ with the bases $B(J_V)⊂B$, $B(J_{\t V})⊂\t B$, respectively.
    Then
    \begin{align}
        \label{eqn:invfun}
        h(B(J_V))&=B(J_{\t V}),&
        φ(v_α)&=φ(\t v_{h(α)})&\text{for }v_α∈J_V.
    \end{align}
\end{definition}
The first relation means that $T$ is topologically distinguished in its neighborhood.
The second relation means that $φ$ takes the same values on the corresponding vector fields of the equivalent families.

Denote by $φ_V:B(J_V)→ℝ$ the function $φ|_{J_V}$ written in the chart $α$, $φ_V(α)=φ(v_α)$;
denote by $χ$ the restriction $h|_{J_V}$ written in the charts $α$, $\t α$.
Then \eqref{eqn:invfun} takes the form
\begin{equation}
    \label{eqn:invfun-functor}
    φ_{\t V}∘χ=φ_V.
\end{equation}
We shall use this form later.
\begin{remark}
    The notion of an invariant function $T→ℝ$ should not be confused with the notion of a functional invariant.
    If $k=\codim T$, then $J_V$ consists of one point, and an invariant function provides a numeric invariant.
\end{remark}

\subsection{Factory of invariant functions}\label{sub:factory}

\subsubsection{A special polycycle of higher codimension}\label{subsub:poly-gamma-e-i}
Consider a normalized vector field $v_0$ with a hyperbolic polycycle $γ_e^0$ monodromic from the exterior.
Suppose that $γ_e^0$ includes a polycycle $γ_i^0$ monodromic from the interior, see \autoref{fig:gen}.

In order to introduce a condition analogous to \eqref{eqn:char}, we need the following definition.
\begin{definition}
    \label{def:carp}
    The \emph{characteristic number} of a hyperbolic polycycle $γ$ is the product of the characteristic numbers of all the saddles met during one turn along the polycycle;
    it is denoted by $λ(γ)$.
\end{definition}
Suppose that
\begin{align}
    \label{eqn:wind}
    λ(γ_e^0)&>1,&
    λ(γ_i^0)<1.
\end{align}
\begin{example}
    \label{ex:eqn-wind-char}
    The characteristic numbers of the polycycles $γ$ and $l$ from \autoref{sec:family} are equal to $λ²μ$ and $λ$, respectively.
    So, in this case \eqref{eqn:wind} becomes \eqref{eqn:char}.
\end{example}

Polycycles $γ_e^0$, $γ_i^0$ will be called \emph{special} for future references.

\subsubsection{Class \texorpdfstring{$\hat\bfT$}{T̂}}\label{subsub:class-hbfT}
Let us now construct a class $\hat\bfT=\hat\bfT(γ_e^0,γ_i^0)$.
Vector fields $v∈\hat\bfT$ are required to have polycycles $γ_i(v)⊂γ_e(v)$ such that the couple $(γ_e, γ_i)$ is homeomorphic to $(γ_e^0, γ_i^0)$.
At this spot, for the abuse of notation, we skip the expression $(v)$ that indicates the dependence of different objects on $v$.
Assume also that $v$ has no saddle connections other than the edges of $γ_e$.

Moreover, let $E$ and $I$ be hyperbolic saddles outside $γ_e$ and inside $γ_i$ respectively.
Suppose that there exists a smooth arc $Γ$ that connects $E$ and $I$ and is transversal to $v$ in its interior points;
let $O$ be the unique intersection point $Γ∩ γ_i$, and $l ⊂ γ_i$ be the saddle connection, edge of $γ_i$, that contains $O$.
Denote by $Γ^+$ and $Γ^-$ half open arcs of $Γ$ between $O$ and $E$ ($E$ excluded), and between $O$ and $I$ ($I$ excluded) respectively.
Note that the germs of the monodromy maps
\begin{align}
    \label{eqn:D-e-i}
    Δ_e&:(Γ^+,O)→(Γ^+,O), & Δ_i&:(Γ^-,O)→(Γ^-,O)
\end{align}
along the polycycles $γ_e$, $γ_i$ are well defined.
Suppose that the germs $Δ_e$, $Δ_i^{-1}$ may be extended as global Poincaré maps of the vector field $v$:
\begin{align*}
    Δ_e&: Γ^+→Γ^+, &Δ_i^{-1}&: Γ^- → Γ^-
\end{align*}
that have no fixed points except for $O$; both maps are into.
Then $O$ is an attracting fixed point of $Δ_e$ and of $Δ_i^{-1}$;
this follows from \eqref{eqn:wind}, see \autoref{rem:wind} below.

Let $\hat\mcH$, cf. \autoref{subsub:class-bfT}, be the class of vector fields $v$ such that
\begin{itemize}
    \item all the singular points and polycycles of the vector field $v$ are hyperbolic;
    \item products of all the characteristic values of the saddles of $v$ taken in the powers $0$, $1$, $2$, where at least one power is non-zero, are different from $1$.
\end{itemize}
Clearly, $\hat\mcH$ is open and dense in $\Vect(S^2)$.
We assume that $v∈\hat\mcH$.
We also assume that the following analog of \autoref{as:*} from \autoref{subsub:class-bfT} holds for the class $\hat\bfT$.
\begin{assumption}
    \label{as:func}
    For any polycycle $γ$ of a vector fields $v∈\hat\bfT$ there is no more than one saddle whose separatrix winds toward $γ$ in the positive or negative time.
\end{assumption}
This completes the definition of the class~$\hat\bfT$.
\begin{proposition}
    [cf. \autoref{prop:bfT-dist}]
    \label{prop:hbfT-dist}
    Consider two orbitally topologically equivalent vector fields $v, \t v∈\hat\mcH$.
    If $v∈\hat\bfT$, then $\t v∈\hat\bfT$.
    Moreover, a homeomorphism provided by \autoref{def:oteq} sends the polycycles $γ_e(v)$, $γ_i(v)$, and the saddles $E(v)$, $I(v)$ to similar objects for $\t v$.
\end{proposition}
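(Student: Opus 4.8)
The plan is to mimic the proof of \autoref{prop:bfT-dist} almost verbatim, the only new ingredient being the bookkeeping of a \emph{nested pair} of polycycles rather than a single polycycle plus a loop. Let $H_0$ be the linking homeomorphism supplied by \autoref{def:oteq}, and set $\t γ_e=H_0(γ_e(v))$, $\t γ_i=H_0(γ_i(v))$, $\t E=H_0(E(v))$, $\t I=H_0(I(v))$, together with the images $\t Γ$, $\t l$, and the images of the winding separatrices of $E$ and $I$. Since orbital topological equivalence carries hyperbolic saddles to hyperbolic saddles, preserves the saddle/connection structure of separatrix polygons, and preserves the time orientation of orbits, the images $\t γ_e\supset\t γ_i$ form a nested pair of hyperbolic polycycles homeomorphic to $(γ_e^0,γ_i^0)$, with $\t γ_e$ monodromic from the exterior and $\t γ_i$ monodromic from the interior (monodromicity from a given side is preserved because $H_0$ maps a semi-transversal pointing outside/inside to one with the same property, and by \autoref{rem:normalized} it fixes $\infty$, so ``exterior'' is preserved). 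The images of the winding separatrices are separatrices of $\t E$, $\t I$ that wind onto $\t γ_e$, $\t γ_i$ in the appropriate time directions.

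The second step is to check that $\t E$ and $\t I$ are the \emph{unique} saddles with these winding properties for $\t v$, hence that \autoref{as:func} holds on the $\t v$ side. This is the same argument as in \autoref{prop:bfT-dist}: if $\t v$ had another saddle with a separatrix winding onto $\t γ_e$ (or onto any polycycle of $\t v$), pulling it back by $H_0^{-1}$ would produce a second winding separatrix for $v$, contradicting \autoref{as:func} for $v$. One also checks that $\t v$ has no saddle connections besides the edges of $\t γ_e$, again by pulling back under $H_0^{-1}$ and invoking the corresponding property of $v$; in particular the arc $\t Γ$ meets $\t γ_i$ in a single point $\t O=H_0(O)$ on the edge $\t l$, and the germs of the monodromy maps $Δ_e$, $Δ_i$ of $\t v$ along $\t γ_e$, $\t γ_i$ are well defined and extend to global Poincaré maps $\t Δ_e\colon \t Γ^+\to\t Γ^+$, $\t Δ_i^{-1}\colon \t Γ^-\to\t Γ^-$ with $\t O$ the only fixed point (conjugate under $H_0$ to the corresponding maps of $v$).

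It remains to verify the analog of \eqref{eqn:char}, namely that the characteristic numbers of $\t γ_e$, $\t γ_i$ still satisfy \eqref{eqn:wind}. Here we use $\t v\in\hat\mcH$ together with \autoref{rem:wind} (the winding criterion, proved in \autoref{sec:aux}): the existence of a separatrix winding onto $\t γ_i$ in negative time forces the corresponding monodromy map to be attracting towards $\t O$, which by the winding criterion requires $λ(\t γ_i)<1$ if $λ(\t γ_i)\ne1$; but $\t v\in\hat\mcH$ rules out $λ(\t γ_i)=1$ (it is a product of characteristic numbers with exponents in $\{0,1,2\}$, not all zero). The same reasoning applied to the separatrix of $\t E$ winding onto $\t γ_e$ in positive time gives $λ(\t γ_e)>1$. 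Hence both conditions of \eqref{eqn:wind} hold, all of $v$'s defining properties transfer to $\t v$, and $\t v\in\hat\bfT=\hat\bfT(γ_e^0,γ_i^0)$; the statement that $H_0$ sends $γ_e,γ_i,E,I$ to the corresponding objects for $\t v$ is exactly how those images were defined.

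The only real obstacle is making precise the claim that ``monodromic from the exterior/interior'' is preserved by $H_0$ for the nested pair and that the relevant monodromy germs of $\t v$ coincide (up to topological conjugacy by $H_0$) with those of $v$ — in other words, that $H_0$ maps semi-transversals to semi-transversals compatibly with the nesting; once that is granted, the transfer of \eqref{eqn:wind} via \autoref{rem:wind} and $\hat\mcH$ is purely formal, exactly as in \autoref{prop:bfT-dist}.
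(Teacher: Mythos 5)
Your proposal is correct and follows essentially the same route as the paper: transfer all the defining structures of $\hat\bfT$ via $H_0$ (the paper dispatches this by reference to the proof of \autoref{prop:bfT-dist}), then establish \eqref{eqn:wind} by combining membership in $\hat\mcH$ (to exclude characteristic numbers of the polycycles equal to $1$, since each is a product of saddle characteristic numbers in powers $0$, $1$, $2$) with \autoref{rem:wind} applied to the winding separatrices of $\t E$ and $\t I$. The only cosmetic difference is that the paper phrases the last step as a proof by contradiction while you argue directly; the content is identical.
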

\begin{proof}
    Let $H$ be a homeomorphism that links the phase portraits of $v$ and $\t v$.
    Similarly to the proof of \autoref{prop:bfT-dist}, the polycycles $γ_e(\t v)=H(γ_e(v))$, $γ_i(\t v)=H(γ_i(v))$ and the saddles $\t E=H(E)$, $\t I=H(I)$ satisfy all the requirements of the class $\hat\bfT$, possibly except for inequalities \eqref{eqn:wind}.
    Let us prove these inequalities.

    Suppose that $λ(γ_e(\t v))≤1$.
    Note that each saddle is met at most twice during one turn along $γ_e(\t v)$, hence $λ(γ_e(\t v))$ is a product of characteristic numbers of saddles of $\t v$ taken in the powers $0$, $1$, $2$.
    By definition of $\hat\mcH$, this implies $λ(γ_e(\t v))≠1$, hence $λ(γ_e(\t v))<1$.
    By \autoref{rem:wind}, this contradicts the property of the saddle $E$, namely, that a separatrix of $E$ winds towards $γ_e(\t v)$, and proves the left inequality in \eqref{eqn:wind}.
    One can prove the right inequality in \eqref{eqn:wind} in the same way.
\end{proof}

\begin{figure}
    \centering\includegraphics[scale=0.8]{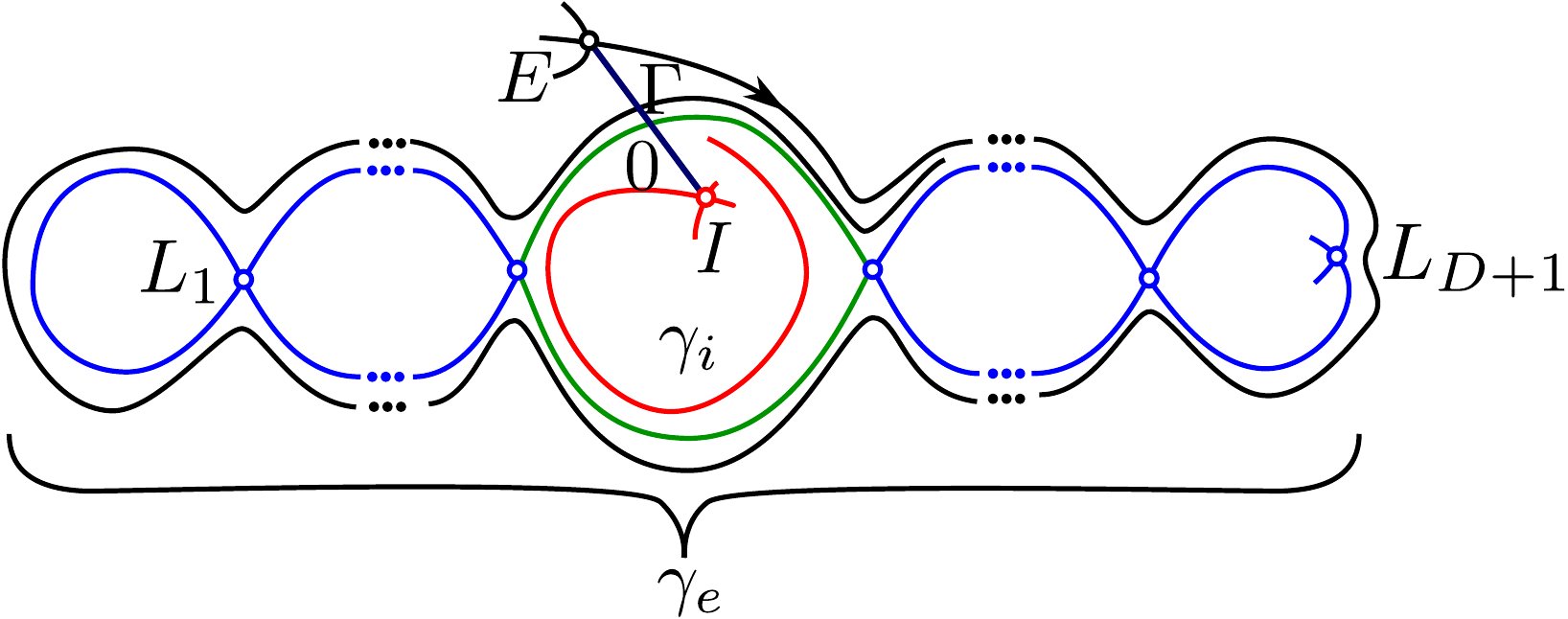}
    \caption{Factory of invariant functions}
    \label{fig:gen}
\end{figure}

Let $φ$ be the function given by
\begin{equation}
    \label{eqn:infuu}
    φ(v)= - \frac{\log λ(γ_i(v))}{\log λ(γ_e(v))},\quad φ\colon\hat\bfT→ℝ_+.
\end{equation}
Fix $k ≥ \codim \hat\bfT$.
Consider the class $\hat \bfT^\trans_k$.
Recall that according to \autoref{subsub:unfoldings} this is the class of $k$-parameter unfoldings of vector fields of class $\hat\bfT$.

\begin{theorem}
    \label{thm:invfun}
    The function $φ$ is invariant for the families of class $\hat \bfT^\trans_k$ in sense of \autoref{def:inv}.
\end{theorem}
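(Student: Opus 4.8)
The plan is to mimic, in the higher-codimension setting, the argument of \autoref{subsub:num} that established \autoref{thm:unst1}. Recall that \autoref{def:inv} asks for two things: that $\hat\bfT$ is topologically distinguished (so that $h$ maps $B(J_V)$ onto $B(J_{\t V})$), and that $φ$ is constant along the $h$-correspondence between $J_V$ and $J_{\t V}$. The first point follows almost formally from \autoref{prop:hbfT-dist} together with \autoref{prop:dist}: if $V,\t V∈\hat\bfT^\trans_k$ are moderately equivalent via $H=(h,H_α)$, and $v_α∈J_V=V∩\hat\bfT$, then $v_α$ has polycycles $γ_i(v_α)⊂γ_e(v_α)$ and no extra saddle connections; the linking homeomorphism $H_α$ carries these to a configuration of the same combinatorial type for $\t v_{h(α)}$, and moderateness guarantees that the edges of these polycycles stay close to the unperturbed ones, so by \autoref{prop:dist} the corresponding separatrix splitting parameters of $\t v_{h(α)}$ vanish, i.e. $\t v_{h(α)}∈\hat\bfT$. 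Thus $h(B(J_V))⊂B(J_{\t V})$, and the reverse inclusion follows by applying the same argument to $H^{-1}$.

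For the second point I would show that $φ(v_α)=φ(\t v_{h(α)})$ for every $v_α∈J_V$. Fix such an $α$, write $\t α=h(α)$, and consider the restriction of the equivalence to the $(3\cdot1)$-parameter unfoldings of $v_α$, $\t v_{\t α}$ transversal to $\bfT_{γ_e}$ inside $\hat\bfT$. More precisely, since $\hat\bfT$ sits inside the manifold $\bfT_{γ_e}$ (a vector field in $\hat\bfT$ in particular has the single exterior polycycle $γ_e$ with its two "heart"-type edges broken), one obtains near $v_α$ a distinguished line $ℰ$ — the locus where the two edges of $γ_e$ adjacent to its $TH$-structure stay unbroken — exactly as in \autoref{sub:sparkling}, parametrized by a splitting parameter $ε$. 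Along $ℰ$ there are two sequences of sparkling saddle connections: the exterior ones, produced by the separatrix of $E$ winding around $γ_e$, and the interior ones, produced by the separatrix of $I$ winding around $γ_i$. By \autoref{lem:asym1} (the stated analog of \autoref{lem:asym}), the $ε$-values $e_n$, $i_m$ of these connections satisfy
\[
    \log(-\log e_n) = n\log λ(γ_e)+O(1),\qquad
    \log(-\log i_m) = m\log λ(γ_i)^{-1}+O(1),
\]
so by \autoref{lem:dens} the relative density is
\[
    ν\bigl((e_n),(i_m)\bigr)=\frac{-\log λ(γ_i)}{\log λ(γ_e)}=φ(v_α).
\]
Now \autoref{prop:hbfT-dist} shows that $H_α$ carries $γ_e(v_α),γ_i(v_α),E,I$ to the corresponding objects for $\t v_{\t α}$, and (again using \autoref{prop:dist} and moderateness) that $h$ carries the distinguished line $ℰ$ of $V$ at $v_α$ to the distinguished line $\t ℰ$ of $\t V$ at $\t v_{\t α}$; the restriction $ψ=h|_ℰ$, read in the charts $ε$, $\t ε$, is a homeomorphism germ $(ℝ_+,0)→(ℝ_+,0)$. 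Since $H_ε$ is an orbital topological equivalence for each $ε$, the vector fields $v_ε$ and $\t v_{ψ(ε)}$ have a sparkling exterior (resp. interior) connection simultaneously, so $ψ$ maps $(e_n)$ to $(\t e_n)$ and $(i_m)$ to $(\t i_m)$ up to a shift of indexing. Hence the pairs $\bigl((e_n),(i_m)\bigr)$ and $\bigl((\t e_n),(\t i_m)\bigr)$ are equivalent in the sense of \autoref{def:coup-eq}, their relative densities agree, and therefore $φ(v_α)=ν((e_n),(i_m))=ν((\t e_n),(\t i_m))=φ(\t v_{\t α})$, which is the second relation in \eqref{eqn:invfun}.

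The main obstacle I anticipate is the bookkeeping around the distinguished line $ℰ$: in this higher-codimension setting $\hat\bfT$ lies inside $\bfT_{γ_e}$, which itself has codimension $3$ and self-intersections along vector fields with several $γ$-type polycycles, so one must check that the "frozen heart" locus $ℰ$ is well defined, one-dimensional, and genuinely topologically characterized near $v_α$ — for this I would invoke \autoref{prop:dist} exactly as in the proof of \autoref{thm:unst1}, noting that it applies verbatim to the two edges of $γ_e$ forming its heart. A secondary technical point is verifying that \autoref{as:func} indeed guarantees that $E$ and $I$ (and the saddles $L$, $M$ of $γ_e$'s heart) are the unique saddles with the relevant winding property, so that \autoref{prop:hbfT-dist} pins them down canonically; this is why the inequalities \eqref{eqn:wind} and the hyperbolicity/non-resonance conditions defining $\hat\mcH$ were built in. Everything else — existence, uniqueness and asymptotics of the sparkling connections — is delegated to \autoref{lem:asym1}, and the density computation is \autoref{lem:dens}, so the proof reduces to assembling these pieces in the order above.
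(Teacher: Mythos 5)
Your first paragraph (that $\hat\bfT$ is topologically distinguished, so $h(B(J_V))=B(J_{\t V})$, via \autoref{prop:hbfT-dist} and \autoref{prop:dist}) matches the paper. The gap is in the second part. When $k>\codim\hat\bfT$, the locus where all edges of $γ_e$ except $l$ stay unbroken is the $(d+1)$-dimensional subfamily $ℰ=V∩\set{σ_2=⋯=σ_N=0}$ with $d=k-\codim\hat\bfT>0$, and that is the \emph{only} object your topological characterization pins down: $h$ sends $ℰ$ to $\t ℰ$ and $B(J_V)$ to $B(J_{\t V})$, but nothing forces it to send the one-dimensional $ε$-axis $\set{η=η(α)}$ through a fixed $v_α∈J_V$ to the corresponding $\t ε$-axis through $\t v_{h(α)}$. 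The fibration of $ℰ$ by the lines $\set{η=\const}$ is defined by the coordinate $η$, which is not topologically canonical, so your claim that ``the restriction $ψ=h|_ℰ$, read in the charts $ε$, $\t ε$, is a homeomorphism germ $(ℝ_+,0)→(ℝ_+,0)$'' mapping $(e_n)$ to $(\t e_n)$ and $(i_m)$ to $(\t i_m)$ has no justification: $h$ carries your line to some curve in $\t ℰ$ approaching $(\t η,0)$, not to a line, and the one-dimensional relative density of \autoref{lem:dens} cannot be applied along it as stated.

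The paper's proof is built precisely to avoid this. It treats $i_m$ and $e_n$ as \emph{functional} sequences $B(J_V)→ℝ_+$ (\autoref{lem:asym1}, with the $O(1)$ uniform in $η$), whose graphs $\mcI_m$, $ℰ_n$ are hypersurfaces in $ℰ$; the index shift $h(\mcI_m)=\wt\mcI_{m-a}$, $h(ℰ_n)=\wt ℰ_{n-b}$ is extracted from the monotonicity under inclusion of the domains $\mcI_m^+$, $ℰ_n^+$, which any homeomorphism of $ℰ$ onto $\t ℰ$ must respect; and the relative density is redefined as the two-variable limit $\lim_{(η',ε)→(η,0),\,ε>0}N_e(η',ε)/N_i(η',ε)$, which equals $φ(v_{η,0})$ by the uniform asymptotics and is invariant under an arbitrary homeomorphism $(ℰ,J_V)→(\t ℰ,J_{\t V})$ rather than only under fiber-preserving ones. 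Your argument can be repaired by substituting this two-variable density for the one-dimensional one, but as written the reduction to the case $d=0$ is the missing idea, not mere bookkeeping.
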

\begin{proof}
We will prove \autoref{thm:invfun} in the current \autoref{sub:factory}.
The proof follows the same lines as for \autoref{thm:unst1}.

\subsubsection{Some bifurcations in the families of class \texorpdfstring{$\hat \bfT^\trans_k$}{T̂\textasciicircum ⫛\_k}}
Consider a local family $V = \set{ v_α |α ∈ (ℝ^k,0)} $ of class $\hat \bfT^\trans_k$.
By definition, $v_0 ∈ \hat\bfT$.
Let $γ_e$, $γ_i$ be two polycycles and let $l$ be a saddle connection of $v_0$ described in the previous subsection.
As in \autoref{sub:inv-functions}, put $J_V=\hat \bfT∩V$, $d ≔ \dim J_V$.

Let $l_j$, $j=1,…,N$, be the edges of $γ_e$, $l=l_1$.
Let $σ_j$ be the corresponding separatrix splitting parameters defined as in \autoref{sub:gencond}.
Assume that the cross section $Γ_1$ used to define $σ_1$ coincides with $Γ$ near $Γ∩l$.
Put $ε=σ_1$.

Let $ℰ = ℰ_{V,l}$ be the $(d + 1)$-dimensional subfamily of $V$ defined by
\begin{equation}
    \label{eqn:def-E-dim-N}
    ℰ_{V,l}=V∩\set{σ_2=\dots=σ_N=0}.
\end{equation}
Obviously, $J_V ⊂ ℰ$.

Let $η∈ (ℝ^d,0)$ be a coordinate on $B(J_V)$ smoothly extended to $B(ℰ)$, $η(v_0) = 0$.
By the transversality of $V$ to $\hat \bfT$, $B(ℰ)$ is a germ of a smooth submanifold of $B(V)$, and $α=0$ is a~non-critical point of the function $ε$ on $B(ℰ)$.
Then $β≔(η, ε)$ may be taken as a coordinate (new parameter) on $B(ℰ)$ near $0$.
By definition of $ε$, $J_V=\set{v_α|v_α∈ℰ, ε(α)=0}$.

Vector fields $v_β∈ ℰ$ have sparkling saddle connections between the saddles $E(β)$, $M(β)$ on one hand, and $L(β)$, $I(β)$ on the other hand (exterior and interior connections respectively).
Let $Γ (β)$ be a cross section that plays for $v_β$ the same role as $Γ$ plays for $v = v_0$.
Namely, $Γ (β)$ connects $E (β)$ and $I(β)$, and is transversal to $v_β$ at its interior points.
Moreover, $Γ (β)$ continuously depends on $β$, and $Γ (0) = Γ$.
We may assume that $Γ (β)$ coincides with $Γ $ outside some small neighborhoods of $E$ and $I$.
\begin{definition}
    [cf. \autoref{def:n-turns-numer}]
    \label{def:connect1}
    A connection $E(β)M(β)$ (respectively, $L(β)I(β)$) \emph{makes} $n$ (respectively, $m$) \emph{turns around $γ_e$} (around $γ_i$) provided that it intersects the arc $Γ(β)$ with endpoints excluded at exactly $n$ points (resp., $m$ points).
\end{definition}

The following analog of \autoref{lem:asym} holds:
\begin{lemma}
    [Generalized Asymptotic Lemma]
    \label{lem:asym1}
    Consider the family $ℰ = \set{v_β}$ described above, $β = (η, ε)$.
    For $m$, $n$ sufficiently large, the following holds.
    In some neighborhood of $0$ in $ℝ^d$, non depending on $m$ and $n$, there exist uniquely determined positive functions $ε = i_m(η)$, $ε = e_n(η)$ such that the vector field $v_β$, $β = (η, i_m(η))$ ($β=(η, e_n(η))$) has an interior (exterior) saddle connection that makes $m$ (respectively $n$) turns around $γ_i$ (around $γ_e$).
    The functional sequences $(i_m)$, $(e_n)$ decrease monotonically and satisfy the equations:
    \begin{align}
        \label{eqn:im1}
        \log (-\log i_m(η)) &= - m\log λ_i(η) + O(1),\\
        \label{eqn:en1}
        \log (-\log e_n(η)) &= n\log λ_e(η) + O(1),
    \end{align}
    where $λ_i(η) = λ(γ_i(v_β))$, $λ_e(η) = λ(γ_e(v_β))$, $β = (η, 0)$,
    and the remainder terms are uniformly bounded in $η∈(ℝ^d,0)$.
\end{lemma}
\begin{remark}
    \label{rem:L3-hence-L1}
    When $k=\codim\hat\bfT$, the base of $J_V$ is one point $\set{0}$, and functions $i_m$, $e_n$ become numbers.
    In this case \autoref{lem:asym1} implies \autoref{lem:asym}, setting $\hat\bfT=\bfT$, $k=3$.
\end{remark}

This lemma is proved in \autoref{sec:aux} together with \autoref{lem:asym}.

\subsubsection{Two equivalent families of class \texorpdfstring{$\hat \bfT^\trans_k$}{T̂\textasciicircum ⫛\_k}}
In what follows, we suppose that $k>\codim\hat \bfT$.
If $k=\codim\hat\bfT$, then \autoref{thm:invfun} is proved in the same way as \autoref{thm:unst1}.
Consider two moderately topologically equivalent local families of class $\hat \bfT^\trans_k$:
$V =\{v_α\}$ and $\wt V=\{\t v_{\t α}\}$.
Let $J_V = V∩\hat \bfT$, $J_{\t V}=\t V∩\hat\bfT$.
Let $H=(h, H_α)$, $h(0)=0$, be a moderate equivalence~\eqref{eqn:conj} that links these two families.
By \autoref{prop:hbfT-dist}, the class $\hat\bfT$ is topologically distinguished in its neighborhood, hence $h(B(J_V))=B(J_{\t V})$.

Let $l=l_1$, $l_j$ for $j=2,…,N$, and $ℰ$ be the same as in the previous subsection.
Put $\t l = H_0 (l)$, $\t l_j = H_0(l_j)$.
Let $\t σ_j:B(\t V)→ℝ$ be the separatrix splitting parameters for $\t l_j$, $\t ε=\t σ_1$.
Let $\t ℰ=\t V∩\set{\t σ_2=⋯=\t σ_N=0}$.
Similarly to \autoref{subsub:num}, \autoref{prop:dist} implies that $h$ sends $\set{σ_j=0}$ to $\set{\t σ_j=0}$, hence $h$ sends $ℰ$ to $\t ℰ$.

Let $i_m$ and $e_n$ be the functions corresponding to sparkling saddle connections in the family $ℰ$ introduced in \autoref{lem:asym1}.
Denote by $\mcI_m ⊂ ℰ$, $ℰ_n ⊂ ℰ$ the graphs of the functions $i_m, e_n\colon B(J_V)→ℝ_+$, respectively, defined for $r$ small, and $m$, $n$ large enough, and by $\mcI_m^+$ and $ℰ_n^+$ the domains $ε > i_m(η)$, $ε > e_n(η)$.
Let $\wt \mcI_m$, $\wt ℰ_n$, $\wt \mcI_m^+$, $\wt ℰ_n^+$ be the similar objects for $\wt V$.

The functional sequences $(i_m)$ and $(e_n)$ are monotonically decreasing by \autoref{lem:asym1}.
The sequence $(i_m)$ corresponds to saddle connections between the saddles $L(α)$, $I(α)$, where $α ∈B(ℰ)$, $ε(α) = i_m (η (α))$.
We ignore here the fact that this connection makes $m$ turns.
The vector field $v_α ∈ ℰ$ that has a saddle connection $L(α) I(α)$ corresponds to a vector field $\t v_α ∈\wt ℰ$ that has a saddle connection $\t L(\t α) \t I(\t α)$.
Hence, the map $h$ brings the germ of the union of graphs $⋃_{m≥m_0} \mcI_m$ at $0$ to the germ of the union of graphs $⋃_{\t m≥\t m_0}^∞ \wt \mcI_{\t m}$ at $0$.
A similar statement holds fo the unions $⋃_{n≥n_0}ℰ_n$ and $⋃_{\t n≥\t n_0}\t ℰ_{\t n}$.

The sequences of domains $(ℐ_m^+)$, $(ℰ_n^+)$, $(\wt ℐ_m^+)$, $(\wt ℰ_n^+)$ are monotonically increasing with respect to inclusion.
The homeomorphism $h$ respects this property.
Hence, there exist integer $a$ and $b$ such that
\begin{equation}
    \label{eqn:graphs}
    h ( \mcI_m) = (\wt \mcI_{m-a}),\quad
    h ( ℰ_m) = (\wt ℰ_{m-b}).
\end{equation}
\subsubsection{Relative density of two functional sequences}
Simlarly to \autoref{subsub:relative-density}, we introduce the counting functions
\[
    N_i(η, ε) = \# \set{m>m_0| ε ≤ i_m(η) }, \
    N_e(η, ε) = \# \set{n>n_0| ε ≤ e_n(η) }.
\]
In other words, $N_i(η, ε)=m$ for $β=(η, ε)∈ℐ_{m+m_0+1}∖ℐ_{m+m_0}$, and similarly for $N_e$.
In the same way, functions $\t N_i$ and $\t N_e$ are defined.

Define the \emph{relative density} of $(i_m)$ and $(e_n)$ as a limit
\[
    ν (\eta) =
        \lim_{\substack{(η', ε)→(η, 0)\\ε>0}} \frac{N_e(η', ε)}{N_i(η', ε)},
\]
if exists.
Then \autoref{lem:asym1} implies that
\begin{equation}
    \label{eqn:lim-Ni-Ne-func}
    \begin{aligned}
       ν(η) = \lim_{\substack{(η', ε)→(η, 0)\\ε>0}} \frac{N_e(η', ε)}{N_i(η', ε)}&=
        \lim_{\substack{(η', ε)→(η, 0)\\ε>0}} \frac{\log(-\log ε)+O(1)}{\log λ_e(η')}×\frac{-\log λ_i(η')}{\log(-\log ε)+O(1)}\\
        &=-\frac{\log λ_i(η)}{\log λ_e(η)}=φ(v_{η, 0}),
    \end{aligned}
\end{equation}
and a similar equality holds for $\t N_e$ and $\t N_i$.

\subsubsection{Invariance of the function \texorpdfstring{$φ$}{φ}}
We will deduce invariance of the function $φ$, see~\eqref{eqn:infuu}, from the properties~\eqref{eqn:graphs} and \eqref{eqn:lim-Ni-Ne-func}.
Consider a point $(η, 0)∈(ℝ^d, 0)$ and the corresponding point $(\t η, 0)=h(η, 0)∈(ℝ^d, 0)$.
Let us compute the limit $\displaystyle\lim_{\substack{(η, ε)→(η_0, 0)\\ε>0}} \frac{\t N_e(h(η, ε))}{\t N_i(h(η, ε))}$ in two ways.
On one hand, due to \eqref{eqn:graphs} and \eqref{eqn:lim-Ni-Ne-func},
\[
    \lim_{\substack{(η', ε)→(η, 0)\\ε>0}} \frac{\t N_e(h(η', ε))}{\t N_i(h(η', ε))}=
    \lim_{\substack{(η', ε)→(η, 0)\\ε>0}} \frac{N_e(η', ε)+\const}{N_i(η', ε)+\const}=
    \lim_{\substack{(η', ε)→(η, 0)\\ε>0}} \frac{N_e(η', ε)}{N_i(η', ε)}=
    φ(v_{η, 0}).
\]
On the other hand, $h$ is a homeomorphism that sends $ℰ^+$ to $\t ℰ^+$, hence
\[
    \lim_{\substack{(η', ε)→(η, 0)\\ε>0}} \frac{\t N_e(h(η', ε))}{\t N_i(h(η', ε))}=
    \lim_{\substack{(\t η', \t ε)→(\t η, 0)\\\t ε>0}} \frac{\t N_e(\t η', \t ε)}{\t N_i(\t η', \t ε)}=
    φ(\t v_{\t η, 0}).
\]
This implies the second part of \eqref{eqn:invfun}, so we proved \autoref{thm:invfun}, modulo \autoref{lem:asym1}.
\end{proof}

\subsection{Simple diagrams}\label{sub:simdiag}
Suppose that in the construction above, the polycycle $γ_e$ contains several polycycles $γ_i^j$, $j=1,…,D$, monodromic from the interior.
Suppose that assumptions of \autoref{thm:invfun} hold for each pair of polycycles $(γ_e, γ_i^j)$.
Then an unfolding of the corresponding vector field may have several invariant functions $φ_j$ simultaneously, one for each pair $(γ_e, γ_i^j)$.

Together with \eqref{eqn:invfun-functor}, this motivates the following definition.
\begin{definition}
    \label{def:simple}
    A \emph{simple diagram of rank $(d, D)$}, $d<D$, is an equivalence class of germs of smooth maps,
    \begin{equation}
        \label{eqn:diag}
        φ:(M_φ,0)→(ℝ^D,φ(0)),
    \end{equation}
    where $(M_φ, 0)$ is a germ of a $d$-dimensional manifold.
    Two germs \eqref{eqn:diag} are equivalent, $φ\sim \t φ$, provided that there exists a homeomorphism $χ: (M_φ,0) →(M_{\t φ},0)$ such that
    \begin{equation}
        \label{eqn:equiv2}
        \t φ \circ χ = φ.
    \end{equation}
    Denote by $[φ]$ the equivalence class that contains $φ$.
\end{definition}
\begin{remark}
    We write $(M_φ, 0)$ instead of $(ℝ^d, 0)$ to underline that the there are no canonical coordinates in the domain of $φ$, except for the origin.
    On the other hand, the coordinates in the image of the map~\eqref{eqn:diag} are well defined.
    These are the values of the functions $φ_j$.
\end{remark}

Generic germ of map~\eqref{eqn:diag} has rank $d$.
Two maps of rank $d$ are equivalent if and only if the germs of their images coincide.
Fix a decomposition $ℝ^D=ℝ^d\oplus ℝ^{D-d}$.
The first space is spanned by the first $d$ axis, the second one by the last $D - d$ ones.

\begin{remark}
    \label{rem:real}
    For a generic smooth map $φ$, see \eqref{eqn:diag}, its image is a graph of a map $ f: (ℝ^d,a)→(ℝ^{D-d}, b)$, $(a,b) = φ(0)$.
    This map is called the \emph{modulus} of $φ$.
    Two generic smooth maps of the form \eqref{eqn:diag} are equivalent iff their moduli coincide.
    So, we may call $f$ the modulus of the simple diagram $[φ]$.
    Any such modulus may be realized by some simple diagram.
\end{remark}
For instance, a simple diagram $[φ]$ with generic $φ$~\eqref{eqn:diag} for $d=1$, $D=2$ is characterized by a~germ of a smooth function $(ℝ, a)→(ℝ, b)$, cf. \autoref{thm:func-unst}.

\autoref{def:inv} in the form \eqref{eqn:invfun-functor}, and \autoref{def:simple} imply the following proposition.
\begin{proposition}
    \label{prop:many-diag}
    Let $T$ be a Banach submanifold of $\Vect(S^2)$.
    Suppose that the moderate topological classification of families of class $T^\trans_k$ possesses $D>d=k-\codim T$ invariant functions $φ_j:T→ℝ$, $j=1,…,D$.
    Then the simple diagram $[φ_V]$, where $φ_V$ is given by $φ_V=(φ_1, …, φ_D)∘J_V$, or equivalently,
    \begin{align}
        \label{eqn:phi-V}
        φ_V(α)&=(φ_1(v_α), …, φ_D(v_α)),&
        φ_V&:(B(J_V), 0)→(ℝ^D, φ_V(0)),
    \end{align}
    is an invariant of this classification.
\end{proposition}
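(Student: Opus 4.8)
The plan is to read off \autoref{prop:many-diag} as a direct consequence of \autoref{def:inv} (in the form~\eqref{eqn:invfun-functor}) applied simultaneously to all $D$ invariant functions, together with the transversality built into the class $T^\trans_k$. First I would fix two moderately topologically equivalent families $V=\set{v_α}$, $\t V=\set{\t v_{\t α}}$ of class $T^\trans_k$ and a linking map $H=(h,H_α)$ as in~\eqref{eqn:conj}; since families in $T^\trans_k$ are local families, the base map $h$ satisfies $h(0)=0$. As $V$ is transversal to $T$ at $0$, the base $B(J_V)=V^{-1}(T)$ is a germ at $0$ of a smooth submanifold of $B$ of dimension $k-\codim T=d$, and likewise for $B(J_{\t V})$; hence $φ_V=(φ_1,\dots,φ_D)\circ J_V$ is a germ of a smooth map $(B(J_V),0)→(ℝ^D,φ_V(0))$, so that $[φ_V]$ is indeed a simple diagram of rank $(d,D)$ in the sense of \autoref{def:simple} (here I use that the functions $φ_j$ are smooth on $T$, as they are in all the constructions of this paper).

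Next I would produce the homeomorphism of domains required by \autoref{def:simple}. The subfamilies $J_V=V∩T$ and $J_{\t V}=\t V∩T$ depend only on $V$, $\t V$ and $T$, not on the individual $φ_j$, so the first relation of~\eqref{eqn:invfun} — which holds for each of the invariant functions — is one and the same statement $h(B(J_V))=B(J_{\t V})$. Restricting the homeomorphism $h\colon B→\t B$ then gives a germ of a homeomorphism $χ ≔ h|_{B(J_V)}\colon(B(J_V),0)→(B(J_{\t V}),0)$.

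Finally I would check the identity $φ_{\t V}\circ χ=φ_V$. For $α∈B(J_V)$ we have $v_α∈J_V$ and $\t v_{h(α)}∈J_{\t V}$, and the second relation of~\eqref{eqn:invfun} (equivalently~\eqref{eqn:invfun-functor}), applied to each $φ_j$ separately, gives $φ_j(\t v_{h(α)})=φ_j(v_α)$ for $j=1,\dots,D$. Assembling the $D$ coordinates, $φ_{\t V}(χ(α))=(φ_1(v_α),\dots,φ_D(v_α))=φ_V(α)$, which is precisely condition~\eqref{eqn:equiv2}; hence $φ_V\sim φ_{\t V}$, so $[φ_V]=[φ_{\t V}]$ and the simple diagram $[φ_V]$ is an invariant of the moderate topological classification of families of class $T^\trans_k$. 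There is no real obstacle here: all the substance — that $T$ is topologically distinguished near $v_0$, and that each $φ_j$ is invariant — is supplied by the hypotheses (and, in the applications, by \autoref{prop:hbfT-dist} and \autoref{thm:invfun}); the only point calling for a word of care is that the common base $B(J_V)$ is a smooth $d$-dimensional germ, which is immediate from the transversality of $V$ to $T$.
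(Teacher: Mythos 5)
Your argument is correct and is exactly the route the paper takes: the paper states that \autoref{def:inv} in the form~\eqref{eqn:invfun-functor} together with \autoref{def:simple} imply the proposition, and offers no further proof, while you simply spell out the details (the common restriction $χ=h|_{B(J_V)}$ and the coordinatewise application of~\eqref{eqn:invfun-functor} to each $φ_j$). No discrepancy to report.
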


The following theorem is a local version of \autoref{thm:fuinv}.
\begin{theorem}
    \label{thm:fuinv-loc}
    For any two positive integers $d<D$ there exists a Banach submanifold $\bfT_D⊂\Vect(S^2)$ of codimension $2D + 1$ such that the moderate topological classification of families of class $\left(\bfT_D\right)^\trans_k$ has a simple diagram $[φ]$ of rank $(d, D)$ as an invariant.
    Any diagram with positive $φ(0)$ may be realized as an invariant of some family $V∈\left(\bfT_D\right)^\trans_k$.
\end{theorem}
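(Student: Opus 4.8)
The plan is to build the Banach submanifold $\bfT_D$ by combining the "factory" construction of \autoref{sub:factory} with the idea of \autoref{sub:simdiag}: take a normalized vector field $v_0$ carrying one exterior-monodromic polycycle $γ_e^0$ that contains $D$ disjoint interior-monodromic polycycles $γ_i^{0,j}$, $j=1,\dots,D$, together with exterior saddle $E$ and interior saddles $I_j$ and transversal arcs $Γ^j$ satisfying the monodromy/winding hypotheses of \autoref{subsub:class-hbfT}, and the genericity condition $v_0∈\hat\mcH$. Each pair $(γ_e^0,γ_i^{0,j})$ should moreover satisfy \eqref{eqn:wind}. One checks, exactly as in \autoref{subsub:ban}, that the set $\bfT_D$ of vector fields carrying such a configuration (with no saddle connections beyond the edges of $γ_e$, all singular points and polycycles hyperbolic) is a Banach submanifold of $\Vect(S^2)$: its defining equations are the vanishing of the $2D+1$ separatrix splitting parameters attached to the edges of $γ_e$ that are not shared (the $D$ interior loops contribute, together with the exterior polycycle's edges, exactly $2D+1$ independent conditions; a partition-of-unity argument perturbs each splitting parameter independently), so $\codim\bfT_D=2D+1$. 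For $k\ge 2D+1$ a family $V\in(\bfT_D)^\trans_k$ meets $\bfT_D$ in a $d$-dimensional subfamily $J_V$, $d=k-2D-1$.

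Next I would verify that the $D$ functions $φ_j(v)=-\dfrac{\log λ(γ_i^j(v))}{\log λ(γ_e(v))}$ are each invariant functions of the moderate classification of $(\bfT_D)^\trans_k$ in the sense of \autoref{def:inv}. This is done one index at a time: fixing $j$, forget all interior polycycles except $γ_i^j$; then $v_0$ lies in a class of type $\hat\bfT(γ_e^0,γ_i^{0,j})$ (of codimension $2+1$ relative to the extra conditions already imposed, so that the ambient submanifold is again $\bfT_D$ in a neighborhood), and \autoref{thm:invfun} — whose proof in \autoref{sub:factory} only uses the single pair $(γ_e,γ_i^j)$, the sparkling connections between $E,I_j$ and $L$ along $γ_e$, $γ_i^j$, and \autoref{lem:asym1} — applies verbatim to show $φ_j$ is invariant. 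The only point needing care is that \autoref{prop:hbfT-dist} must be upgraded to show the whole configuration (all of $γ_e$, all $γ_i^j$, all saddles) is carried by the linking homeomorphism to the corresponding configuration for $\t v$; but this follows from the same argument as \autoref{prop:hbfT-dist}, using that $\hat\mcH$ (hence $\bfT_D\subset\hat\mcH$) forces every polycycle's characteristic number to stay on the correct side of $1$ under a homeomorphism, and \autoref{as:func} makes $E$ and each $I_j$ topologically distinguished.

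With $D$ invariant functions in hand and $d=k-\codim\bfT_D<D$, \autoref{prop:many-diag} immediately gives that the simple diagram $[φ_V]$, $φ_V=(φ_1,\dots,φ_D)\circ J_V:(B(J_V),0)→(ℝ^D,φ_V(0))$, is an invariant of the moderate topological classification of $(\bfT_D)^\trans_k$; its rank is $d$ generically, so it is a simple diagram of rank $(d,D)$. It remains to prove the realization statement: given any germ of a smooth map $φ:(M_φ,0)→(ℝ^D,c)$ with $c\in ℝ_+^D$, exhibit a family $V\in(\bfT_D)^\trans_k$ whose invariant diagram is $[φ]$. For this I would work in a coordinate chart $(ℝ^d,0)$ on $M_φ$ and construct $V$ fiberwise: prescribe along $J_V$ the characteristic numbers $λ(γ_e)$ and $λ(γ_i^j)$ of the saddles so that the functions $φ_j$ take exactly the prescribed values $φ_j(\eta)$ — this is possible because one may move the eigenvalue ratios of the saddles of $v_0$ freely (the characteristic numbers of distinct saddles are independent parameters, and one has enough of them, making sure inequalities \eqref{eqn:wind} and the $\hat\mcH$ conditions persist since $c$ is a positive vector), then extend off $J_V$ by choosing the remaining $2D+1$ splitting parameters $σ_2,\dots$ as independent coordinates transverse to $\bfT_D$, which guarantees $V\in(\bfT_D)^\trans_k$. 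The extension of each local picture to the whole sphere is done as in \autoref{subsub:ext}.

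The main obstacle, I expect, is the realization half: one must simultaneously (i) arrange the prescribed values of all $D$ functions $φ_j$ along the $d$-dimensional $J_V$ while keeping the configuration inside $\hat\mcH$ and respecting \eqref{eqn:wind}, and (ii) ensure transversality of $V$ to $\bfT_D$, i.e. that the $2D+1$ splitting parameters together with the $d$ parameters carrying the $φ_j$-values form a genuine coordinate system on $B(V)$. Both are "soft" once the combinatorial/topological model of $v_0$ is set up, but checking that the model with $D$ nested interior polycycles, one shared exterior polycycle, and the required winding separatrixes actually exists — and that the extension to $S^2$ can be completed with only hyperbolic singular points and no spurious saddle connections — is the delicate geometric construction that underpins everything.
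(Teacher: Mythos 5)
Your proposal follows essentially the same route as the paper: build $\bfT_D$ as the intersection $\bigcap_j \hat\bfT(γ_e, γ_i^j)$ for a single exterior-monodromic polycycle containing $D$ interior-monodromic ones, obtain the $D$ invariant functions $φ_j$ from \autoref{thm:invfun} applied pairwise, assemble them into the diagram $[φ_V]$ via \autoref{prop:many-diag}, and realize an arbitrary diagram by prescribing the characteristic numbers along $J_V$. The one substantive divergence is the combinatorial model: the paper does not take \emph{disjoint} interior polycycles but an explicit chain of $D+1$ saddles in which consecutive polycycles $γ_i^j$ share a vertex and a terminal non-monodromic ``heart'' supplies the last two edges, which is precisely what makes the edge count --- and hence $\codim \bfT_D = 2D+1$ --- come out right, a point your disjoint configuration asserts but leaves unverified.
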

We prove this theorem in the next two subsections, then we deduce \autoref{thm:fuinv} from it.
The proof is based on \autoref{thm:invfun} and \autoref{prop:many-diag}.

\subsection{Six parameter local families with a functional invariant}

In this section we prove \autoref{thm:fuinv-loc} for $d = 1$, $D = 2$.
Let us describe the class $\bfT_2$.

Consider a vector field $v_0$ with a hyperbolic polycycle $γ_e(v_0)$ with three vertexes:
saddles $L_1$, $L_2$, $L_3$, and five edges:
a separatrix loop of the saddle $L_1$, and four time oriented connections: $L_jL_{j+1}$, $L_{j+1}L_j$, $j = 1,2$, see \autoref{fig:six}.
This polycycle includes three smaller polycycles:
the separatrix loop $γ_i^1$ of the saddle $L_1$,
the polycycle $γ_i^2$ formed by the connections $L_1L_2$, $L_2L_1$,
and the polycycle $γ_h$ formed by the connections $L_2L_3$, $L_3L_2$.
Each of these three smaller polycycles is located outside of the other two.
The polycycle $γ_h$ is a polycycle “heart”, see \autoref{sub:spec1}.

\begin{figure}%[hbt]
    \centering\includegraphics[scale=0.8]{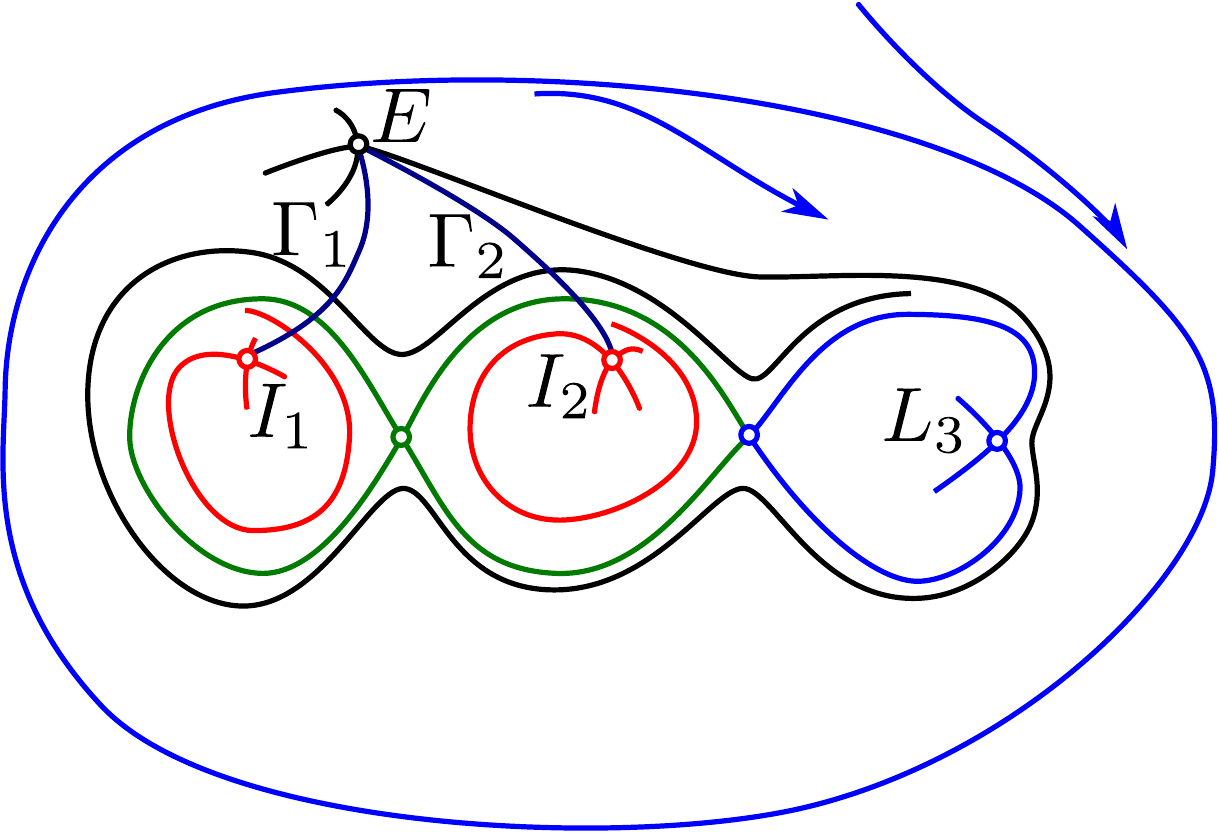}
    \caption{
        Phase portrait of a vector field of class~$\bfT_{2,1}$.
        The phase portrait of a vector field of class $\bfT_2$ may lack the outer semi-stable limit cycle.
    }
    \label{fig:six}
\end{figure}

The polycycle $γ_e$ is therefore monodromic from the exterior,
and the polycycles $γ_i^1$, $γ_i^2$ are monodromic from the interior.
Let $λ_j$ be the characteristic number of $L_j$.
Then the characteristic numbers of the polycycles $γ_i^1$, $γ_i^2$, $γ_e$ are given by
\begin{align*}
    λ(γ_i^1) &= λ_1,&
    λ(γ_i^2) &= λ_1λ_2,&
    λ(γ_e) &= λ_1^2λ_2^2λ_3.
\end{align*}
Suppose that both pairs $(γ_e, γ_i^1)$ and $(γ_e, γ_i^2)$ satisfy \eqref{eqn:wind}, that is:
\begin{align}
    \label{eqn:lg}
    λ_1^2λ_2^2λ_3 &> 1,&
    λ_1&<1,&
    λ_1λ_2&<1.
\end{align}

This completes the description of the polycycle $γ_e$.
Note that both pairs $(γ_e, γ_i^1)$ and $(γ_e, γ_i^2)$ satisfy the properties formulated in \autoref{subsub:poly-gamma-e-i}.
Hence, both classes $\hat\bfT(γ_e, γ_i^1)$ and $\hat\bfT(γ_e, γ_i^2)$ are well defined.
Let $\bfT_2$ be the intersection of these two classes.
\[
    \bfT_2=\hat\bfT(γ_e,γ_i^1)∩\hat\bfT(γ_e,γ_i^2).
\]
This class has codimension $5$.

\begin{remark}
    For $v∈\bfT_2$, the same saddle $E$ plays the role of $E$ from \autoref{subsub:class-hbfT} for both pairs $(γ_e, γ_i^1)$ and $(γ_e, γ_i^2)$, though the saddles $I_1$, $I_2$ inside the polycycles $γ_i^1$, $γ_i^2$ are of course different, see \autoref{fig:six}.
    In a neighborhood of $v∈\bfT_2$, we have $\bfT_2=\hat\bfT(γ_e, γ_i^1)=\hat\bfT(γ_e, γ_i^2)$.
    Indeed, a vector field in $\hat\bfT(γ_e, γ_i^1)$ has the polycycle $γ_e$ preserved, hence both $γ_i^1$ and $γ_i^2$ are preserved, and a small perturbation cannot destroy \eqref{eqn:lg}.
\end{remark}

Let $V$ be a local family of the class $\left(\bfT_2\right)^\trans_6$, i.e., $v_0∈\bfT_2$, and $V$ is transverse to $\bfT_2$ at $v_0$.
As $\bfT_2=\hat\bfT(γ_e,γ_i^1)$ near $v_0$, we can apply \autoref{thm:invfun} to $γ_i=γ_i^1$, and get an invariant function $φ_1$:
\begin{align*}
    φ_1&:\bfT_2→ℝ,&
    φ_1(w)&=-\frac {\log λ(γ_i^1(w))}{\log λ(γ_e(w))}.
\end{align*}
We can apply the same theorem to $γ_i=γ_i^2$, and get an invariant function
\begin{align*}
    φ_2&:\bfT_2→ℝ,&
    φ_2(w)&=-\frac {\log λ(γ_i^2(w))}{\log γ_e(w)}.
\end{align*}

Therefore, due to \autoref{prop:many-diag}, the simple diagram $[φ_V]$ given by $φ_V=(φ_1, φ_2)∘J_V$ is an invaraint of the moderate topological classification of the families of class $\left( \bfT_2 \right)^\trans_6$.

The only restriction on the functions $λ_j(w_η)$ is \eqref{eqn:lg}, hence any simple diagram $[φ]$ of rank $(1, 2)$ with positive $φ(0)$ can be realized as $[φ_V]$.
This proves \autoref{thm:fuinv-loc} for $d = 1$, $D = 2$, modulo \autoref{lem:asym1}.

\subsection{Simple diagrams as functional invariants}

The general version of \autoref{thm:fuinv-loc} is proved in the same way.
Fix $0 < d < D$.
Consider a vector field $v_0$ having a polycycle $γ_e$ with $D + 1$ hyperbolic saddles $L_1, \dots , L_{D+1}$, and $2D + 1$ time oriented edges: the loop of $L_1$ and the connections $L_jL_{j+1}$; $L_{j+1}L_j$.
The edges $L_DL_{D+1}$; $L_{D+1}L_D$ form a polycycle of the type “heart”, see \autoref{fig:gen}.
The characteristic numbers of the saddles $L_j$ are $λ_j$.
The polycycle $γ_i^1$ is the loop of $L_1$, and the polycycles $γ_i^j$, $j = 2, \dots , D-1$ are formed by the edges $L_{j-1}L_j$; $L_jL_{j-1}$.
Their characteristic numbers are:
\begin{align*}
    λ (γ_i^1) &= λ_1,&
    λ(γ_i^j) &= λ_{j-1}λ_j,\,j > 1.
\end{align*}
The characteristic number of the “large” polycycle is
\[
    λ (γ_e) = λ_{D+1}\prod_1^Dλ_j^2.
\]
We require:
\begin{align}
    \label{eqn:lg1}
    λ(γ_i^j) &< 1, & λ(γ_e) &> 1.
\end{align}

Let $\bfT_D$ be given by
\[
    \bfT_D=\hat\bfT(γ_e, γ_i^1)∩…∩\hat\bfT(γ_e, γ_i^D).
\]
In a neighborhood of any vector field $v_0∈\bfT_D$, we have $\bfT_D=\hat\bfT(γ_e, γ_i^j)$, and $\bfT_D$ is a Banach submanifold of the space $\Vect(S^2)$ of codimension $2D+1$.

Let $V$ be a local family of class $\left(\bfT_D\right)^\trans_k$, $k=d+\codim \bfT_D$.
As $\bfT_D=\hat\bfT_{γ_e,γ_i^j}$ near $v_0$, \autoref{thm:invfun} implies that functions
\begin{align*}
    φ_j&: \bfT_D→ℝ,&
    w &↦-\frac {\log λ (γ_i^j(w))}{\log λ (γ_e(w))}
\end{align*}
are invariant in sense of \autoref{def:inv}.

\autoref{prop:many-diag} implies that the simple diagram $[φ_V]$ given by $φ_V=(φ_1, …, φ_D)∘J_V$ is a functional invaraint of moderate topological equivalence of families $V∈\left(\bfT_D\right)^\trans_k$.

As the characteristic numbers $λ_j(w)$ may be chosen as arbitrary functions of parameters with the only restrictions~\eqref{eqn:lg1}, the simple diagram~$[φ_V]$ may be arbitrary with positive $φ(0)$.
This proves \autoref{thm:fuinv-loc}, modulo~\autoref{lem:asym1}.

\subsection{Non-local families with functional invariants}
In this section we deduce \autoref{thm:fuinv} from \autoref{thm:fuinv-loc}.

Let $V$ and $\t V$ be two local families of class $\left(\bfT_D\right)^\trans_k$, and $\bfV$, $\t\bfV$ be their representatives.
Note that moderate topological equivalence of non-local families $\bfV$, $\t\bfV$ does not imply the moderate topological equivalence of $V$ and $\t V$.
Indeed, the corresponding homeomorphism of the bases may send $0$ to another point of $\t\bfV∩\bfT_D$, but \autoref{def:local} requires it to send zero to zero.

In order to avoid this problem, we introduce a class $\bfT_{D,d}⊂\bfT_D$ of codimension $k$.
Additionally to the parts of the phase portrait required by definition of $\bfT_D$, a vector field $v ∈ \bfT_{D,d}$ has $d$ semistable limit cycles of multiplicity~$2$ separating the polycycle $γ_e$ and the limit cycle at infinity.
Clearly, $\codim\bfT_{D,d}=d+\codim\bfT_D=k$.

Let $\bfT_{D,d}^\trans$ be the class of non-local $k$-parametric families of vector fields $V⊂\hat\mcH$ that intersect $\bfT_{D,d}$ at a single point, and contain no other vector fields orbitally topologically equivalent to vector fields of class $\bfT_{D,d}$.
Consider two moderately topologically equivalent families $\bfV, \t\bfV∈\bfT_{D,d}^\trans$.
Let us shift coordinates in these families so that $v_0=\bfV∩\bfT_{D,d}$, $\t v_0=\t\bfV∩\bfT_{D,d}$.
Let $h$ be the homeomorphism of the bases from \eqref{eqn:conj}.
Due to the definition of $\bfT_{D,d}^\trans$, $h$ sends zero to zero, hence the \emph{local} families $(\bfV, v_0)$ and $(\t\bfV, \t v_0)$ are moderately topologically equivalent.
Note that these families belong to $\left(\bfT_{D,d}\right)^\trans_k$, hence they belong to $\left(\bfT_D\right)^\trans_k$ as well.

Finally, let $[φ_V]$ be the simple diagram provided by \autoref{thm:fuinv-loc}.
For a non-local family $\bfV$, let $V$ be its germ at $\bfV∩\bfT_{D,d}$.
Then the simple diagram $[\hat{φ}_\bfV]=[φ_V]$ is an invariant of moderate topological equivalence of families $\bfV∈\bfT_{D,d}^\trans$.
Therefore, families of class $\bfT_{D,d}^\trans$ have simple diagrams \eqref{eqn:diag} as invariant of moderate topological equivalence, and any diagram with $φ(0)∈ℝ_+^D$ may be realized as the invariant of a family $\bfV∈\bfT_{D,d}^\trans$.
Due to \autoref{rem:real}, an open dense subset of $\bfT_{D,d}^\trans$ has a smooth map $(ℝ_+^d, a)→(ℝ_+^{d'}, b)$ as a functional invariant of the moderate topological equivalence.

This completes the proof of \autoref{thm:fuinv} modulo \autoref{lem:asym1}.
\section{Asymptotics of sparkling saddle connections}\label{sec:aux}

In this section we prove Lemmas~\ref{lem:asym} and~\ref{lem:asym1}, and thus complete the proofs of Theorems~\ref{thm:func-unst}−\ref{thm:unst1} and~\ref{thm:fuinv-loc}.

\subsection{General asymptotics lemma and connection equation}
\label{sub:gen}

\autoref{lem:asym} follows from \autoref{lem:asym1}, see \autoref{rem:L3-hence-L1}, which, in turn contains two statements:
one about the sparkling saddle connections related to the polycycle $γ_i$;
another one about those related to the polycycle $γ_e$, see~\autoref{sub:spec1}.
These are two particular cases of one general lemma that will be stated now.

Consider a vector field $v$ having a hyperbolic monodromic polycycle $γ$.
Let $λ(γ) $ be its \emph{characteristic number}, see \autoref{def:carp}.

Mark one edge of $γ$, say from $L$ to $M$, and a point $O$ on it.
Suppose that $λ (γ) < 1$, and there exists a hyperbolic saddle $I$ whose incoming separatrix $W_I^s$ winds toward $γ$ in the negative time.
Let $Γ$ be a cross section through $O$ with one endpoint $I$ transversal to $v$ everywhere except for $I$.
A germ of a monodromy map $Δ_γ: (Γ ,O)→(Γ ,O)$ is well defined by assumption that $γ$ is monodromic.
Suppose that the germ $Δ_γ$ may be extended to the monodromy map $Δ_γ: I'O→IO$,
where $I'$ is the first intersection point of $Γ $ and the separatrix $W^s_I$ ran from $I$ in the negative time,
see \autoref{fig:int-cross}.

\begin{figure}[hbt]
    \centering
    \subcaptionbox{\label{fig:int-cross}}{\includegraphics[scale=0.5]{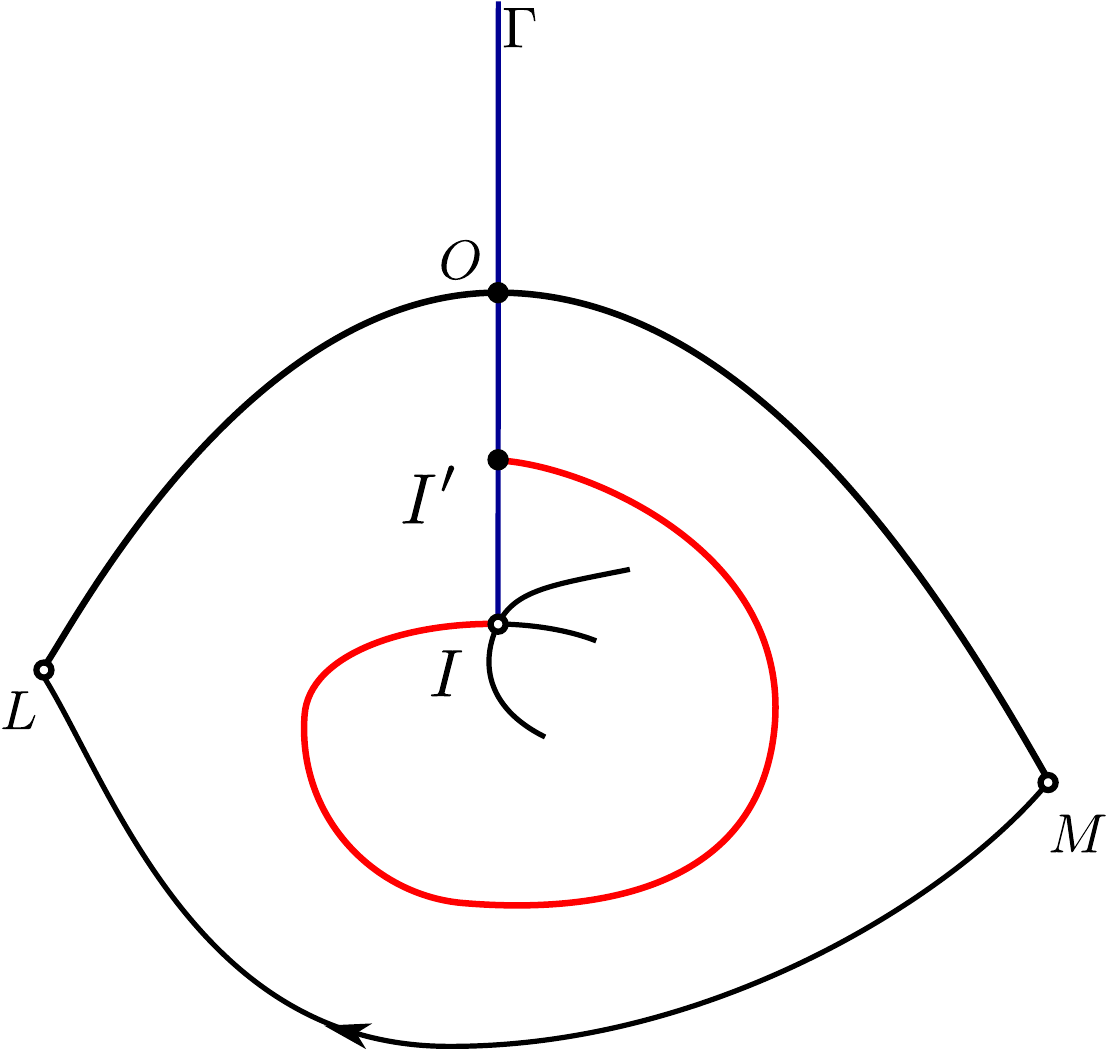}}
    \subcaptionbox{\label{fig:conn-eq}}{\includegraphics[scale=0.5]{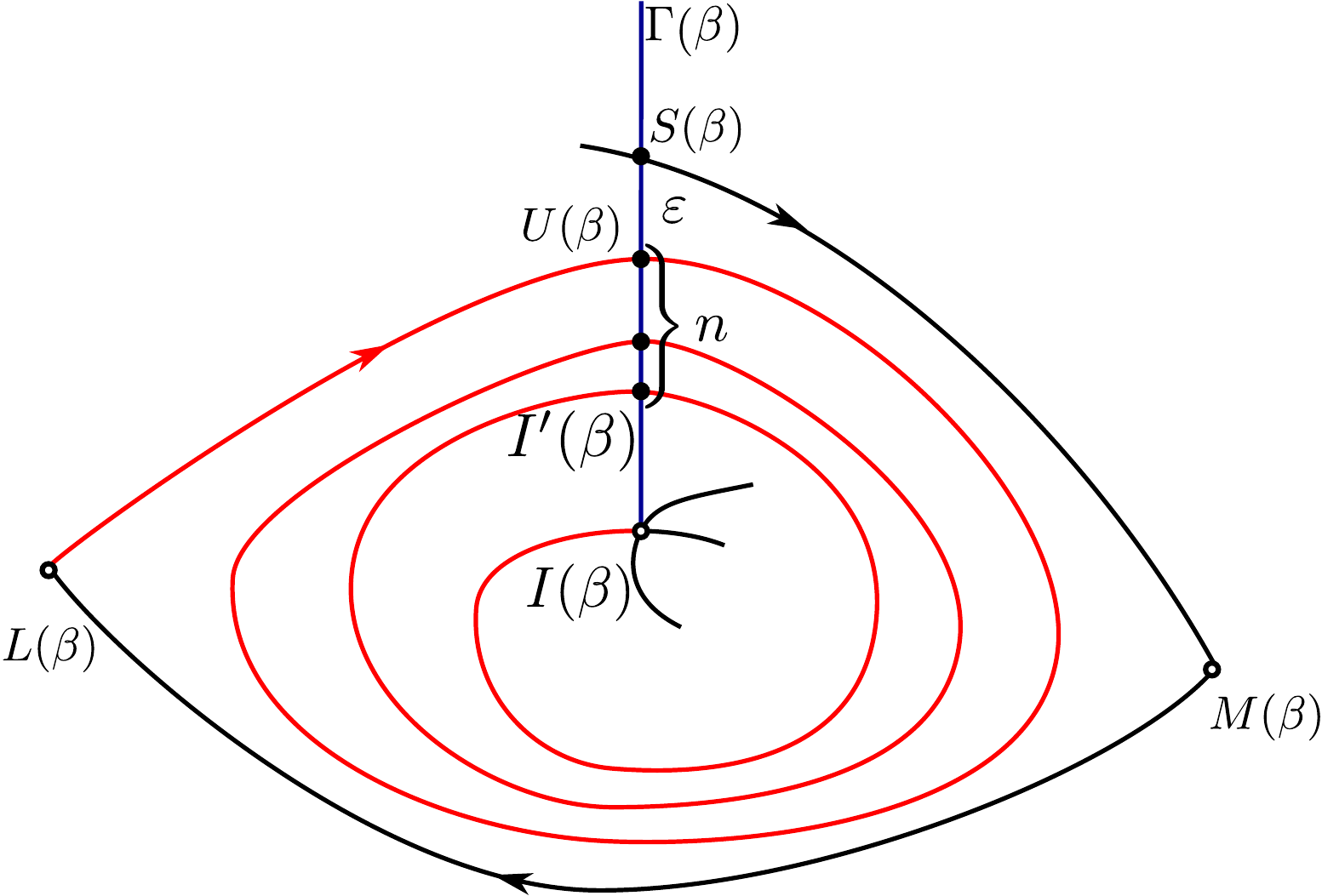}}
    \caption{Geometry of the connection equation}
    \label{fig:conn-eq-both}
\end{figure}

Let $ℰ $ be a $(d + 1)$-parameter (non-generic) family of vector fields,
\begin{equation}
    \label{eqn:famee}
    ℰ = \set{v_β|β∈(ℝ^{d+1},0)} , \ β= (η , ε ), \ η∈(ℝ^d,0), \ ε∈(ℝ ,0), \ v_0 = v.
\end{equation}
Suppose that for $ε = 0$, $β= (η, 0)$ the vector field $v_β$ has a polycycle $γ(η )$ continuously depending on $η$; $γ(0) = γ$.
Let $L(β)$, $M(β)$, $I(β)$ be hyperbolic saddles continuously depending on $β$ and coinciding with $L$, $M$, $I$ for $β= 0$.
For $ε \ne 0$ all the edges of the polycycle $γ$ stay unbroken except for the connection between $L(β)$ and $M(β)$.

For small $η$, the polycycle $γ(η )$ is hyperbolic;
let $λ (η ) = λ (γ(η)) < 1$ be its characteristic number.
Let $U(β)$ ($S(β)$) be the first intersection point of the unstable separatrix of $L(β)$ (stable separatrix of $M(β)$) with $Γ$, continuous in $β$;
$U(0) = S(0) = O$.
Consider a family of cross sections $Γ (β)$ that connect $S(β)$ and $I(β)$;
assume that $Γ(β)$ coincides with $Γ$ outside a small neighborhood of $I$.
Suppose that for small $β$, the map $Δ_γ$ may be extended in $β$ as a map of an arc in $Γ$ to an arc in $Γ_β$:
\[
    Δ_{γ,β}: I'(β)S(β)→I(β)U(β),
\]
see \autoref{fig:conn-eq}.

As before, say that $v_β$ has an $n$-winding connection $L(β)I(β)$ provided that the outcoming separatrix of $L(β)$ crosses the arc $Γ(β)$ with the point $I(β)$ excluded at exactly $n$ points, and enters $I(β)$.

Let $x$ be any smooth chart on $Γ $ positive on the semitransversal where the monodromy map $Δ_{γ, 0}$ is defined.
Let us make a reparametrization and a coordinate change similar to the previous ones:
\begin{align}
    \label{eqn:xbeta}
    ε &= x(U(β)) - x(S(β)), &x_β &= x - x(S(β)).
\end{align}
In this notation,
\begin{align}
    \label{eqn:eps}
    x_β(S(β)) &= 0, &x_β(U(β)) &= ε.
\end{align}

Let $X(β)≔x_β(I'(β))$.
Then the \emph{connection equation}
\begin{align}
    \label{eqn:conn}
    Δ_{γ,β}^{n-1}(ε)&=X(β), &β&=(η, ε),
\end{align}
is equivalent to the fact that $v_β$ has an $n$-winding connection $L(β)I(β)$, see \autoref{fig:conn-eq}.
\begin{lemma}
    \label{lem:asym2}
    In assumption of this subsection, the connection equation has a unique solution $i_n(η)$ for any $n$ large enough defined on a neighborhood of $0$ non-depending on $n$.
    The functional sequence $i_n$ is monotonically decreasing and
    \begin{equation}
        \label{eqn:asym5}
        \log(-\log i_n(η))=-n\log λ (η)+O(1).
    \end{equation}
    The upper bound for the remainder term in the right hand side is uniform in $η$.
\end{lemma}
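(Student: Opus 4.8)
The plan is to pass to a logarithmic chart on $Γ$, in which the monodromy map becomes almost affine, rewrite the connection equation~\eqref{eqn:conn} as a one–dimensional forward iteration, and analyse its orbits. The first thing I would record is the asymptotics of $Δ_{γ,β}$. Writing it in the chart $x_β$ of~\eqref{eqn:xbeta} and using that $γ(η)$ is a hyperbolic polycycle with characteristic number $λ(η)<1$, one composes the correspondence (Dulac) maps of its saddles with the regular transition maps along its edges — the broken edge $LM$ contributing a transition map that depends smoothly on the displacement $ε$, with an additive effect $O(ε)$ — to get, for $ε≤x≤X(β)$, two–sided bounds $c_1 x^{λ(η)}≤Δ_{γ,β}(x)≤c_2 x^{λ(η)}$ with $0<c_1<c_2$ uniform over $β$ in a fixed neighbourhood of $0$ (the $O(ε)$ correction is negligible against $x^{λ(η)}$ once $x≥ε$, since then $ε≤x≤\const\cdot x^{λ(η)}$ because $x$ is bounded). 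Establishing this uniform estimate for the ``broken–edge'' parametric monodromy map is the one genuinely technical ingredient, and I expect it to be the main obstacle: it rests on the classical theory of Dulac maps of hyperbolic saddles and their smooth dependence on parameters, which I would invoke or reprove in an appendix. Setting $t=-\log x_β$ and $T_β(t)=-\log Δ_{γ,β}(e^{-t})$, the bound becomes
\[
    T_β(t)=λ(η)\,t+ψ_β(t),\qquad |ψ_β(t)|≤K\quad(t≥t_\ast),
\]
with $t_\ast$, $K$ uniform. Moreover $T_β$ is increasing, and the hypothesis that the extended monodromy has $O$ as its only fixed point, together with $Δ_{γ,β}(0)=ε>0$, gives $T_β(t)<t$ for all $t≥t_I(β):=-\log X(β)$, with $t_I(β)$ bounded. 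Since $Δ_{γ,β}$ lives on $x_β∈(0,X(β)]$, the connection equation~\eqref{eqn:conn} reads $T_β^{\,n-1}(t)=t_I(β)$ in the chart $t$, with $i_n(η)=e^{-t}$ for its solution; the orbit $t_j:=T_β^{\,j}(t_0)$ then satisfies $t_0>t_1>\dots>t_{n-1}=t_I(β)$.

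For existence and uniqueness I would use $T_β^{-1}$: since $T_β$ is an increasing homeomorphism with $T_β(t)<t$ and no fixed point on $[t_I(β),+∞)$, its inverse satisfies $T_β^{-1}(s)>s$, so $(T_β^{-1})^{\,n-1}(t_I(β))$ strictly increases with $n$ and, having no finite limit (which would be a fixed point of $T_β$), tends to $+∞$; this provides, for each large $n$, the unique $t=t(n,η)$, hence the unique $i_n(η)$, on a neighbourhood of $0$ not depending on $n$. The implicit dependence of $T_β$ on $ε=e^{-t}$ is smooth and disappears as $ε→0$, so it is absorbed by bootstrapping from a crude preliminary estimate. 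Monotonicity of $n↦i_n(η)$ follows by comparing the $Δ_{γ,β}$–orbits of $i_n(η)$ and $i_{n+1}(η)$: both reach $X(β)$, at steps $n-1$ and $n$ respectively, and orbits depend monotonically on the initial point, so $i_{n+1}(η)<i_n(η)$.

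For the asymptotics~\eqref{eqn:asym5}, let $j_\ast$ be the first index with $t_{j_\ast}<t_\ast$ (it exists and satisfies $1≤j_\ast≤n-1$ once $n$ is large and $t_\ast$ is chosen larger than $\sup_β t_I(β)$). For $j≤j_\ast$ the points $t_0,\dots,t_{j-1}$ all lie in $[t_\ast,+∞)$, so iterating the affine identity gives $t_j=λ(η)^{\,j}t_0+r_j$ with $|r_j|≤K/(1-λ_{\max})$, $λ_{\max}:=\sup λ(η)<1$. On the compact set $\{(β,t):t_I(β)≤t≤t_\ast\}$ one has $T_β(t)<t$, hence $T_β(t)≤t-δ$ for a uniform $δ>0$; therefore the part of the orbit inside $[\inf_β t_I(β),t_\ast]$ drops by at least $δ$ per step and so has uniformly bounded length, i.e. $n-1-j_\ast≤N_0$ for a constant $N_0$ independent of $n$ and $η$. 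Since $t_{j_\ast-1}≥t_\ast$, we get $λ(η)^{\,j_\ast-1}t_0≥t_\ast-K/(1-λ_{\max})$; after enlarging $t_\ast$ so that $t_\ast>2K/(1-λ_{\max})$ (permissible, since the affine estimate persists for all larger $t$), this bounds $λ(η)^{\,j_\ast-1}t_0$ below by $t_\ast/2>0$, while it is bounded above by $λ_{\min}^{-1}(t_{j_\ast}+K/(1-λ_{\max}))$, $λ_{\min}:=\inf λ(η)>0$. Taking logarithms and using $j_\ast=n-1-O(1)$ yields $\log t_0=-n\log λ(η)+O(1)$ uniformly in $η$, which is~\eqref{eqn:asym5} because $\log t_0=\log(-\log i_n(η))$. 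Via \autoref{rem:L3-hence-L1} and the reduction described in this subsection (applied to $γ_e$ after reversing time, so that its characteristic number becomes $<1$), this also completes the proofs of \autoref{lem:asym} and \autoref{lem:asym1}.
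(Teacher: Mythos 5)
Your overall skeleton coincides with the paper's: the uniform two-sided bound $Δ_{γ,β}(x)=Θ\left(x^{λ(η)}\right)$ on the perturbed monodromy map is exactly the content of the paper's Comparison Lemma (\autoref{lem:estim}), and your logarithmic chart $t=-\log x$ with the almost affine map $T_β(t)=λ(η)t+O(1)$ is the same sandwich the paper realizes with the monomial maps $f_±(x)=e^{±C}x^{λ}$; your derivation of \eqref{eqn:asym5} by splitting the orbit at a threshold $t_\ast$ and bounding the number of steps spent below it is equivalent to the paper's device of replacing the right-hand side of \eqref{eqn:conn} by $T(β)=x_β\left(Δ_{γ,β}^{-a}(I'(β))\right)<x_-$ and then solving $f_±^m(x)=τ$ explicitly. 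Existence and the asymptotics indeed survive on the $C^0$ estimate alone.

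The gap is in uniqueness, and it contaminates monotonicity. The connection equation is $Δ_{γ,β}^{\,n-1}(ε)=X(β)$ with $β=(η,ε)$: the unknown $ε$ enters both as the initial point of the iteration and as a parameter of the map being iterated. Your argument inverts $T_β$ \emph{for fixed $β$} and produces a unique $t$ with $T_β^{\,n-1}(t)=t_I(β)$; that is a different equation. You acknowledge the coupling (``the implicit dependence of $T_β$ on $ε$ \dots is absorbed by bootstrapping from a crude preliminary estimate''), but a $C^0$ localization cannot rule out several solutions inside the localized interval. What is actually needed --- and what the paper proves --- is that $ε↦Δ_{γ,(η,ε)}^{\,m}(ε)-T(η,ε)$ is strictly increasing. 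This rests on the first-derivative estimates of \autoref{lem:corr-map}, namely $D_xΔ_β=Θ\left(x^{λ(β)-1}\right)$ and $D_εΔ_β=O\left(x^{λ(β)}\log x\right)$, which yield $D_xΔ_{γ,β}>2$ and $D_εΔ_{γ,β}>0$ on the domain $x≥ε$ (relations \eqref{eqn:broken-D:C1-x}, \eqref{eqn:broken-D:C1-e}); the chain rule then shows that every term of $D_ε\left(Δ_{γ,β}^{m}(ε)\right)$ is positive and the last term exceeds $2^m$, which dominates $|D_εT|$, giving \eqref{eqn:mone}. Your proposal never introduces these derivative estimates --- it works only with the $C^0$ bound --- so the uniqueness step, and likewise the comparison of the orbits of $i_n$ and $i_{n+1}$ (which correspond to different values of $ε$ and hence to \emph{different} maps $Δ_{γ,β}$, so ``orbits depend monotonically on the initial point'' does not apply directly), are not justified as written. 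To close the gap you must prove the parametric $C^1$ estimates on the Dulac maps of the saddles and propagate them through the composition, which is precisely the technical core of \autoref{sec:aux}.
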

This lemma is proved in the rest of this section.

\subsection{Local families of correspondence maps of hyperbolic saddles}
\label{sub:saddle}
Here we recall a definition of the correspondence (Dulac) map of a hyperbolic saddle.
\autoref{lem:corr-map} stated below claims that for a local family of such saddles the Dulac map behaves like $x^{λ(β)}$, where $λ(β)$ is the parameter depending characteristic value of the saddle.
In case of a parameter depending linear hyperbolic saddle,
$\dot x = x$; $\dot y = -λ(β)y$,
we have: $Δ_β(x)=x^{λ(β)}$, and the lemma is trivial.
Suppose now that $λ(0)$ is irrational, and $v_β$ is $C^∞$ smooth.
Then the corresponding local family is non-resonant.
Smooth orbital normal form for this family is linear \cite{IYa1}.
In this case the lemma is trivial again. The only non-trivial case is $λ(0)∈ℚ$.
The lemma for both cases simultaneously is proved at the end of this section.

Consider a local family of vector fields in the sphere with a hyperbolic saddle $L(β)$.
Let $λ(β)$ be the characteristic number of $L(β)$.
By a smooth parameter depending coordinate change we may put the saddle at the origin and two separatrixes of the saddle to positive coordinate rays.
A parameter depending \emph{correspondence} or \emph{Dulac map} $Δ_β$ is a map of a semitransversal $Γ^+$ with a vertex on the incoming separatrix to a semitransversal $Γ^-$ with a vertex on the outgoing one along the orbits of the vector field of the family;
the map is defined near the vertex of a semitransversal.
Consider smooth charts $x$, $y$ on the semitransversals $Γ^+$, $Γ^-$ that vanish at the vertexes.
These charts will be called \emph{natural}.

We shall formulate our estimates on $Δ_β$ using $g=O(f)$ and $g=Θ(f)$ notation.
Let $f$ and $g$ be two functions $ℝ_+→ℝ$.
We write
\begin{align*}
    g&=O(f) &&⇔ &∃C>0 \ ∃ε>0 \ ∀x∈(0, ε)&:|g(x)|≤C|f(x)|;\\
    g&=Θ(f) &&⇔ &∃0<c<C \ ∃ε>0 \ ∀x∈(0, ε)&: c|f(x)|≤|g(x)|≤C|f(x)|.
\end{align*}
For parameter depending maps $f_β$, $g_β$, $β∈(ℝ^k, 0)$, the equalities $g_β=O(f_β)$ and $g_β=Θ(f_β)$ mean the same relations as above with $c$, $C$ not depending on $β$.

In what follows, we use notation $D_x$, $D_ε$ for partial derivatives $\frac{∂}{∂x}$, $\frac{∂}{∂ε}$.
\begin{lemma}
    \label{lem:corr-map}
    Consider a local family $\set{v_β | β ∈ (ℝ^k,0)}$ of vector fields with a hyperbolic saddle $L_β$, having the correspondence map $Δ_β$.
    Let $ε$ be a component of $β$, $ε ∈(ℝ, 0)$.
    Then in any natural charts $x$, $y$ on the semitransversals $Γ^+$, $Γ^-$, the map $y=Δ_β(x)$ has the following properties:
    \begin{subequations}
        \label{eqn:corr-map}
        \begin{align}
            \label{eqn:corr:C0}
            Δ_β(x)&=Θ\left( x^{λ(β)} \right),\\
            \label{eqn:corr:C1-x}
            D_xΔ_β(x)&=Θ\left( x^{λ(β)-1} \right),\\
            \label{eqn:corr:C1-b}
            D_εΔ_β(x)&=O\left( x^{λ(β)}\log x \right).
        \end{align}
    \end{subequations}
    in some neighborhood of $0$ in $ℝ_+× ℝ^k$.
    All the estimates are uniform in $β$ for $β$ small enough.
\end{lemma}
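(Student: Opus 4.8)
The plan is to reduce the lemma to a model computation and then to estimate the correspondence map by integrating along the orbits; the whole argument is uniform in $β$, so the dichotomy $λ(0)\in ℚ$ versus $λ(0)\notin ℚ$ recalled above is in fact not needed. First I would flatten the two separatrices of $L_β$ onto the coordinate axes by a $C^3$ parameter-dependent coordinate change (smooth stable manifold theorem with parameters); this brings $v_β$ near $L_β$ to the form $\dot x=xP(x,y,β)$, $\dot y=-yQ(x,y,β)$, with $P,Q$ of class $C^2$ by Hadamard's lemma, $P(0,0,β)>0$ and $Q(0,0,β)/P(0,0,β)=λ(β)$. Dividing the field by $P$ is an orbital change that leaves the correspondence map unaltered and rectifies the unstable flow; write the result as $\dot x=x$, $\dot y=-y\bigl(λ(β)+R(x,y,β)\bigr)$, with $R$ of class $C^2$ and $R(0,0,β)=0$. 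Finally I would compute the correspondence map $Δ_β^0$ between the standard semitransversals $\{y=δ_0\}$ and $\{x=δ_0\}$, using the coordinate functions $x$, $y$ as natural charts. The correspondence map on arbitrary semitransversals $Γ^+$, $Γ^-$ in arbitrary natural charts differs from $Δ_β^0$ by pre- and post-composition with germs at $0$ of diffeomorphisms $g_β$ of $ℝ_+$ that fix $0$ and are $C^3$ jointly in the point and in $β$ (flow maps near the separatrices composed with chart changes); a short computation shows the estimates \eqref{eqn:corr-map} are stable under such compositions, using $g_β(z)=Θ(z)$, $g_β'(z)=Θ(1)$ and $∂_ε g_β(z)=O(z)$.

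In the model coordinates, take $u=\log x$ as the time along the orbit issued from $(x_0,δ_0)$, so that $u$ increases from $\log x_0$ to $\log δ_0$, and put $w=\log y$; then $\tfrac{dw}{du}=-λ(β)-R(e^u,e^w,β)$, and hence
\[
    Δ_β^0(x_0)=y_1=x_0^{λ(β)}\,e^{-I(x_0,β)},\qquad I(x_0,β)=\int_{\log x_0}^{\log δ_0}R\bigl(e^u,e^{w(u)},β\bigr)\,du.
\]
Choose $δ_0$ so small that $|R|<c$ on $[0,δ_0]^2$ for all $β$, with $c<λ(0)/2$; then the orbit stays in this square, and the crude bound $w(u)\le\logδ_0-(λ(β)-c)(u-\log x_0)$ gives $\int_{\log x_0}^{\logδ_0} e^{w(u)}\,du=O(δ_0)$ and $\int_{\log x_0}^{\logδ_0} e^{u}\,du\le δ_0$, uniformly in $β$. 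Since $R=O(x+y)$, these yield $I(x_0,β)=O(δ_0)$, so $e^{-I}=Θ(1)$ with constants tending to $1$ as $δ_0\to 0$, which is \eqref{eqn:corr:C0}.

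For the derivative bounds I would differentiate $y_1=x_0^{λ(β)}e^{-I}$; the only non-routine inputs are estimates for $∂_{x_0}I$ and $∂_εI$, which come from the variational equations for $w$ along the orbit. Differentiating the relation $w(\log x_0;x_0)=\logδ_0$ gives $∂_{x_0}w(\log x_0)=\bigl(λ(β)+R(x_0,δ_0,β)\bigr)/x_0=Θ(1)/x_0$, and since $\int e^{w}|∂_yR|\,du=O(δ_0)$ the variational equation $\tfrac{d}{du}∂_{x_0}w=-e^{w}(∂_yR)\,∂_{x_0}w$ keeps $∂_{x_0}w=Θ(1)/x_0$ on the whole orbit; with the boundary term of $∂_{x_0}I$ this gives $x_0\,∂_{x_0}I=O(δ_0)$. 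Likewise $∂_εw(\log x_0)=0$ and $\tfrac{d}{du}∂_εw=-(∂_ελ+∂_εR)-e^{w}(∂_yR)\,∂_εw$ with $∂_ελ+∂_εR=O(1)$ give $∂_εw=O(\log(1/x_0))$, whence $∂_εI=O(\log(1/x_0))$, the dominant term being the explicit $β$-derivative of $R$ integrated over the $u$-interval of length $O(\log(1/x_0))$. Therefore
\[
    D_{x_0}Δ_β^0(x_0)=x_0^{λ(β)-1}e^{-I}\bigl(λ(β)-x_0\,∂_{x_0}I\bigr)=Θ\bigl(x_0^{λ(β)-1}\bigr),
\]
because for $δ_0$ small the factor $λ(β)-x_0\,∂_{x_0}I$ stays in a small neighbourhood of $λ(0)>0$, and
\[
    D_εΔ_β^0(x_0)=x_0^{λ(β)}e^{-I}\bigl(∂_ελ\cdot\log x_0-∂_εI\bigr)=O\bigl(x_0^{λ(β)}\log x_0\bigr).
\]
Transferring these to arbitrary semitransversals and charts as in the first step completes the proof.

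The main obstacle is uniformity in $β$ together with the \emph{lower} bound in \eqref{eqn:corr:C1-x}: one must exclude that $x_0\,∂_{x_0}I$ cancels $λ(β)$. This is exactly why one uses semitransversals of small size $δ_0$, so that each error integral is $O(δ_0)$ and, for $δ_0$ small enough, stays below $λ(0)/2$ uniformly over a parameter ball; with semitransversals of fixed size these errors would only be $O(1)$ and the lower bound could fail. The rest -- the transfer computation of the first step and the variational estimates of the third -- is routine, provided the $O(\cdot)$ and $Θ(\cdot)$ constants are tracked independently of $β$.
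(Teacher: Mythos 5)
Your proposal is correct and follows essentially the same route as the paper: reduce to the orbital normal form $\dot x=x$, $\dot y=-yg(x,y,β)$ via Hadamard--Perron and Hadamard's lemma, integrate $\log y$ along orbits using $\int O(x)+O(y)\,dt=O(1)$ to get the $Θ\left(x^{λ(β)}\right)$ bound, and control the derivatives by variational equations. The only (harmless) difference is presentational: the paper secures the lower bound in \eqref{eqn:corr:C1-x} by factoring $Δ_β=Δ_β^x∘Δ_β^+$ so that $D_xΔ_β$ is a product of two-sided $Θ$-factors with no cancellation, whereas you differentiate the explicit formula $x_0^{λ(β)}e^{-I}$ and must (and correctly do) make the error $O(δ_0)$ small to rule out cancellation against $λ(β)$.
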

The collection of these three properties will be referred to as property \eqref{eqn:corr-map}.
\begin{remark}
    \label{rem:chain}
    If two families of maps, $Δ_β$ and $\t Δ_β$, satisfy property \eqref{eqn:corr-map} with exponents $λ(β)$ and $μ(β)$ respectively, then their composition satisfies this property with the exponent $λ(β)μ(β)$.
    \ifdetails
    Indeed,
    \begin{align*}
        Δ_β∘\t Δ_β&=Θ\left(Θ\left(x^{μ(β)}\right)^{λ(β)}\right)\\
                  &=Θ\left(x^{λ(β)μ(β)}\right);\\
        D_x(Δ_β∘\t Δ_β)&=(D_xΔ_β)∘\t Δ_β×D_x\t Δ_β\\
                       &=Θ\left( Θ\left( x^{μ(β)} \right)^{λ(β)-1} \right)×Θ\left( x^{μ(β)-1} \right)\\
                       &=Θ\left( x^{λ(β)μ(β)-1} \right);\\
        D_β(Δ_β∘\t Δ_β)&=(D_xΔ_β)∘\t Δ_β×D_β\t Δ_β+(D_βΔ_β)∘\t Δ_β\\
                       &=Θ\left( Θ\left( x^{μ(β)} \right)^{λ(β)-1} \right)×O\left( x^{μ(β)}\log x \right)
                         +O\left( Θ\left( x^{μ(β)} \right)^{λ(β)} \right)×\log Θ\left( x^{μ(β)} \right)\\
                       &=O\left( x^{λ(β)μ(β)}\log x \right)+O\left( x^{λ(β)μ(β)}\log x \right)\\
                       &=O\left( x^{λ(β)μ(β)}\log x \right).
    \end{align*}
    \else
    This follows from the chain rule, and may be checked by a straightforward calculation that we skip.
    \fi
\end{remark}

\subsection{Poincaré maps for unperturbed hyperbolic polycycles}
Here we prove that the Poincaré map of a parameter depending hyperbolic polycycle satisfies property \eqref{eqn:corr-map}.

Consider a vector field $v$ with a hyperbolic polycycle $γ$ monodromic from inside or from outside.
Let $Δ_γ$ be its monodromy map corresponding to a semitransversal $Γ^+$ with a vertex $O ∈ γ$, and $λ (γ)$ be the characteristic number of the polycycle.
Let $Γ$ be a cross-section to $γ$ at $O$ such that $Γ⊃Γ^+$.
Let $x\colon (Γ,O)→(ℝ,0)$ be a smooth chart on $Γ$ positive on $Γ^+ \setminus O$.

\begin{corollary}
    \label{cor:P-polycycle-noparam}
    The monodromy map $Δ_γ$ satisfies relations~\eqref{eqn:corr:C0}, \eqref{eqn:corr:C1-x} with $λ(β)$ replaced by $λ(γ)$.
\end{corollary}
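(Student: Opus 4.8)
The plan is to write $Δ_γ$ as a composition of correspondence (Dulac) maps of the hyperbolic saddles lying on $γ$, alternating with regular flow-box transition maps along the edges, and then to invoke \autoref{lem:corr-map} and \autoref{rem:chain}. First I would enumerate, \emph{with multiplicities}, the saddles $S_1,\dots,S_r$ passed during one turn along $γ$ starting at $O$, and let $λ_j$ be the characteristic number of $S_j$; by \autoref{def:carp} one has $λ_1\cdots λ_r=λ(γ)$. Near each $S_j$ I would choose coordinates straightening its two separatrices onto the positive coordinate rays, together with incoming and outgoing semitransversals $Γ_j^{\mathrm{in}}$, $Γ_j^{\mathrm{out}}$ carrying natural charts. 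This gives
\[
    Δ_γ = T_r\circ Δ_{S_r}\circ T_{r-1}\circ Δ_{S_{r-1}}\circ\cdots\circ Δ_{S_1}\circ T_0,
\]
where $Δ_{S_j}\colon(Γ_j^{\mathrm{in}},0)\to(Γ_j^{\mathrm{out}},0)$ is the Dulac map of $S_j$, $T_0$ is the flow transition from $(Γ,O)$ to $Γ_1^{\mathrm{in}}$ expressed in the chart $x$, $T_j$ for $1\le j\le r-1$ is the flow transition along the edge from $S_j$ to $S_{j+1}$, and $T_r$ is the flow transition along the final edge back to $(Γ,O)$, again in the chart $x$.

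Next I would record the two ingredients. Each $T_j$ arises from the flow of $v$ in a flow box around a regular (nonsingular) arc of $γ$, hence is a germ of a $C^3$ diffeomorphism $(ℝ_+,0)\to(ℝ_+,0)$ with nonvanishing derivative at the origin; the changes of chart between $x$ and the natural charts are smooth and fix $0$, and are absorbed into $T_0$ and $T_r$. Such a diffeomorphism satisfies $T(x)=Θ(x)$ and $D_xT(x)=Θ(x^{0})$, i.e. it obeys \eqref{eqn:corr:C0} and \eqref{eqn:corr:C1-x} with exponent $1$ (there is no parameter here, so \eqref{eqn:corr:C1-b} is vacuous — equivalently, one may append a dummy parameter on which nothing depends). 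Each $Δ_{S_j}$ obeys \eqref{eqn:corr:C0} and \eqref{eqn:corr:C1-x} with exponent $λ_j$ by \autoref{lem:corr-map}. Applying \autoref{rem:chain} inductively along the displayed composition — at every step one of the two exponents being $1$ — I conclude that $Δ_γ$ obeys \eqref{eqn:corr:C0} and \eqref{eqn:corr:C1-x} with exponent $λ_1\cdots λ_r=λ(γ)$, which is exactly the assertion (the cases monodromic from inside and from outside being treated identically).

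I do not expect a genuine obstacle here; the proof is essentially bookkeeping built on \autoref{lem:corr-map} and \autoref{rem:chain}. The two points that require a little care are: (i) counting the saddles with the \emph{correct multiplicity} — a single saddle may be traversed twice during one turn, as for the polycycle of type $TH$ where $λ(γ)=λ^2μ$ — so that the product of exponents produced by \autoref{rem:chain} matches $λ(γ)$ in the sense of \autoref{def:carp}; and (ii) verifying that the edge transition maps are honest $C^3$ diffeomorphisms fixing the origin with nonzero derivative (the standard flow-box/smooth-dependence argument), and that passing from the natural charts to the prescribed chart $x$ on $Γ$ leaves the $Θ$-estimates intact, which is again covered by \autoref{rem:chain}.
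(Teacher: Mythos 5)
Your proposal is correct and follows essentially the same route as the paper: the paper's proof (given jointly with \autoref{cor:P-polycycle}) also decomposes the monodromy map into correspondence maps of the hyperbolic saddles met along one turn and invokes \autoref{lem:corr-map} together with \autoref{rem:chain} to multiply the exponents. Your explicit treatment of the regular edge-transition maps and of the multiplicity with which saddles are counted merely spells out bookkeeping that the paper leaves implicit.
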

\begin{remark}
    \label{rem:wind}
    This corollary, together with inequalities \eqref{eqn:char} (resp., \eqref{eqn:wind}), implies that $O$ is an attracting fixed point for the Poincaré map $Δ_γ$ (resp., $Δ_e$), and a repelling fixed point for the Poincaré map~$Δ_l$ (resp., $Δ_i$), where $Δ_γ$ and $Δ_l$ are the same as in \eqref{eqn:mon} (resp., $Δ_e$ and $Δ_i$ are the same as in \eqref{eqn:D-e-i}).
\end{remark}
\begin{proof}
    The proof follows from \autoref{rem:chain}.
    More details are given in the proof of the next corollary that works for the present one as well.
\end{proof}
Let us turn back to the local family $ℰ$ from \autoref{sub:gen} and consider vector fields with the polycycle $γ(η)$.
Let $λ(γ(η))$ be its characteristic number.
Put $O(η)=γ(η)∩Γ$.
Denote by $Δ_{γ,η}$ the Poincaré map corresponding to $γ (η)$ written in the chart $x_η=x-x(O(η))$.
\begin{corollary}
    \label{cor:P-polycycle}
    The monodromy map $Δ_{γ,η}$ satisfies relations~\eqref{eqn:corr:C0}, \eqref{eqn:corr:C1-x} with $x$, $λ(β)$ replaced with $x_η$ and $λ(γ(η))$.
\end{corollary}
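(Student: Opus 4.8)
The plan is to realize $Δ_{γ,η}$ as one full turn along the polycycle $γ(η)$, decomposed into an alternating composition of local correspondence (Dulac) maps of the hyperbolic saddles sitting on $γ(η)$ and regular transition maps along its edges. Enumerate the saddles met during one turn (with multiplicity, since a saddle may be passed twice) as $L_1(η), \dots, L_p(η)$, with characteristic numbers $λ_1(η), \dots, λ_p(η)$. Choose cross-sections near the incoming and outgoing separatrixes of each $L_j(η)$, with natural charts vanishing at the separatrix points, all depending continuously on $η$; this is possible on a fixed neighborhood of $η=0$ because for $β=(η,0)$ every edge of $γ$ stays unbroken and the saddles vary continuously. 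Writing $D_j(η)$ for the Dulac map of $L_j(η)$ between the two chosen cross-sections, and $R_j(η)$ for the regular (flow) transition along an edge — composed on both ends with the smooth chart changes relating the natural charts to $x_η$ — we obtain a representation
\[
    Δ_{γ,η} = R_p(η) \circ D_p(η) \circ \cdots \circ R_1(η) \circ D_1(η) \circ R_0(η),
\]
where $R_0(η)$ and $R_p(η)$ together account for the edge carrying the point $O(η)$.

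First I would observe that each $R_j(η)$ is a germ of a diffeomorphism fixing $0$ and depending continuously on $η$, so $R_j(η)(x)=Θ(x)$ and $D_x R_j(η)(x)=Θ(1)$ uniformly in $η$; that is, $R_j(η)$ satisfies the parts \eqref{eqn:corr:C0}, \eqref{eqn:corr:C1-x} of property~\eqref{eqn:corr-map} with exponent $1$. Next, \autoref{lem:corr-map}, applied to the local family $\{v_{(η,0)}\}$ at the saddle $L_j(η)$ with $η$ playing the role of the multiparameter, gives that $D_j(η)$ satisfies \eqref{eqn:corr:C0}, \eqref{eqn:corr:C1-x} with exponent $λ_j(η)$, uniformly in $η$. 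Applying the composition property of \autoref{rem:chain} (its parts concerning \eqref{eqn:corr:C0} and \eqref{eqn:corr:C1-x}) to the $2p+1$ factors above, one concludes that $Δ_{γ,η}$ satisfies \eqref{eqn:corr:C0}, \eqref{eqn:corr:C1-x} with exponent equal to the product of the individual exponents, namely $λ_1(η)\cdots λ_p(η)\cdot 1^{p+1}=λ(γ(η))$ by \autoref{def:carp}. Specializing to $η$ fixed (a one-point base) recovers \autoref{cor:P-polycycle-noparam}, which is why the two corollaries can be proved together.

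The point that needs care — and the reason this statement is separated from \autoref{cor:P-polycycle-noparam} — is the uniformity of all estimates in $η$. One must verify that the cross-sections, the natural charts, and the regular transition maps can be chosen $C^1$ in $η$ on a single neighborhood of $η=0$, so that the constants hidden in the $Θ$'s do not depend on $η$; this follows from continuity of the flow together with the already uniform-in-parameters estimates supplied by \autoref{lem:corr-map}, and is routine. No estimate on the $D_ε$-derivative is required here, since the corollary asserts only \eqref{eqn:corr:C0} and \eqref{eqn:corr:C1-x}.
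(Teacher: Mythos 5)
Your proposal is correct and follows essentially the same route as the paper: decompose $Δ_{γ,η}$ into the saddle correspondence maps (plus regular edge transitions), apply \autoref{lem:corr-map} to each factor, and compose via \autoref{rem:chain} to get the exponent $λ(γ(η))$. The paper's proof is just a terser version of this; your explicit treatment of the regular transition maps as exponent-$1$ factors and your remarks on uniformity in $η$ fill in details the paper leaves implicit.
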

\begin{proof}
    The map $Δ_{γ,η}$ is a composition of the correspondence maps for hyperbolic saddles, the vertexes of the polycycle.
    Each correspondence map satisfies~\eqref{eqn:corr:C0} and \eqref{eqn:corr:C1-x} due to \autoref{lem:corr-map}.
    Thus the composition of these maps satisfies~\eqref{eqn:corr:C0} and \eqref{eqn:corr:C1-x} with the exponent $λ(β)$ equal to the product of the exponents for individual maps, see \autoref{rem:chain}.
\end{proof}

\subsection{Poincaré maps for slightly perturbed hyperbolic polycycles}\label{sub:poipert}

Let us now consider vector fields of the family $ℰ$ for $ε \neq 0$.
In the notation of \autoref{sub:gen}, let $Λ (β)$ be the product of the characteristic values of all the saddles met on the way from $M(β)$ to $L(β)$
along the chain of unbroken saddle connections, including $M(β)$ and $L(β)$.
By definition, $Λ (η, 0) = λ (η)$.

\begin{corollary}
    \label{cor:2}
    In the settings above, in the chart $x = x_β$, see~\eqref{eqn:xbeta}, we have
    \begin{equation}
        \label{eqn:D-plus-eps}
        Δ_{γ, β}(x)=ε+\wt Δ_{γ, β}(x),
    \end{equation}
    where $\wt Δ_{γ, β}$ satisfies~\eqref{eqn:corr-map}, with $λ(β)$ replaced by $Λ(β)$.
\end{corollary}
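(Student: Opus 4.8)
The plan is to split the perturbed monodromy map $Δ_{γ,β}$ into three factors: a regular (flow‑box) transition from $Γ$ to a small cross‑section near $M(β)$, the \emph{corridor map} running once along the still unbroken chain of saddle connections from $M(β)$ to $L(β)$ — a composition of the Dulac maps of the saddles met on the way and of regular transitions between them — and a regular transition from a small cross‑section near $L(β)$ back to $Γ$. The additive constant $ε$ in~\eqref{eqn:D-plus-eps} should then fall out automatically from the last transition once $Γ$ is read in the chart $x_β$, because by~\eqref{eqn:eps} the outgoing separatrix of $L(β)$ meets $Γ$ exactly at the point $U(β)$ of coordinate $ε$. In this sense \autoref{cor:2} is just \autoref{cor:P-polycycle} with the broken connection bookkept.

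In detail, first I would fix cross‑sections $Γ_M$ and $Γ_L$ transversal, respectively, to the incoming separatrix of $M(β)$ and to the outgoing separatrix of $L(β)$, close to these saddles, with natural charts vanishing on those separatrices. Let $T_1$ be the forward‑flow transition from $(Γ,x_β)$ to $Γ_M$ and $T_2$ the forward‑flow transition from $Γ_L$ to $(Γ,x_β)$; both are regular germs depending smoothly on $β$. Since $T_1$ carries $S(β)$, of coordinate $0$ by~\eqref{eqn:eps}, to the point of $Γ_M$ on the incoming separatrix, of coordinate $0$, we get $T_1(0,β)=0$; since $T_2$ carries the point of $Γ_L$ on the outgoing separatrix, of coordinate $0$, to $U(β)$, of coordinate $ε$, we may write $T_2(z,β)=ε+\hat T_2(z,β)$ with $\hat T_2(0,β)=0$. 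Writing $P_β\colon Γ_M→Γ_L$ for the corridor map, we obtain $Δ_{γ,β}=T_2∘P_β∘T_1=ε+\hat T_2∘P_β∘T_1$ on the relevant small arc, so~\eqref{eqn:D-plus-eps} holds with $\wt Δ_{γ,β}:=\hat T_2∘P_β∘T_1$, and it only remains to check that $\wt Δ_{γ,β}$ satisfies property~\eqref{eqn:corr-map} with $λ(β)$ replaced by $Λ(β)$.

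For that I would first observe that any regular transition $T$, in natural charts, is a smooth germ with $T(0,β)=0$ and $D_xT(0,β)≠0$ uniformly in small $β$; hence $T=Θ(x)$, $D_xT=Θ(1)=Θ(x^{1-1})$, and $D_εT=O(x)=O(x\log x)$ because $D_εT$ vanishes at $x=0$; that is, $T$ obeys~\eqref{eqn:corr-map} with exponent $1$. The same applies to $\hat T_2$, since $D_ε\hat T_2=D_εT_2-1$ also vanishes at $x=0$. The corridor map $P_β$ is a composition of Dulac maps of hyperbolic saddles, each obeying~\eqref{eqn:corr-map} by~\autoref{lem:corr-map}, and of regular transitions, so by~\autoref{rem:chain} it obeys~\eqref{eqn:corr-map} with exponent equal to the product of the characteristic numbers of all the saddles met from $M(β)$ to $L(β)$ inclusive, i.e. with exponent $Λ(β)$ — this is exactly the computation of~\autoref{cor:P-polycycle}, whose proof applies verbatim to the unbroken corridor. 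Composing once more with $T_1$ and $\hat T_2$ and invoking~\autoref{rem:chain} twice gives $\wt Δ_{γ,β}$ the exponent $1\cdot Λ(β)\cdot 1=Λ(β)$, with all implied constants uniform in $β$, which is~\eqref{eqn:D-plus-eps}.

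The analytic content is already packaged in~\autoref{lem:corr-map} and~\autoref{rem:chain}, so the only real obstacle is the bookkeeping: one must arrange the factorization so that the broken connection contributes precisely the constant $ε$ in the chart $x_β$ — which is exactly what the normalization~\eqref{eqn:eps} is designed for — and one must choose $Γ_M$, $Γ_L$ and the family $Γ(β)$ compatibly with the conventions of~\autoref{sub:gen}, so that on the small arc $I'(β)S(β)$ (where the connection equation~\eqref{eqn:conn} is read, and where $Γ(β)$ agrees with $Γ$) the composition $\hat T_2∘P_β∘T_1$ really is the map $Δ_{γ,β}-ε$.
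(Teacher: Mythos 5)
Your proposal is correct and follows essentially the same route as the paper: the paper's own (two-line) proof of \autoref{cor:2} likewise extracts the additive constant $ε$ from the normalization~\eqref{eqn:eps} and obtains the exponent $Λ(β)$ by observing that $\wt Δ_{γ,β}$ is still a composition of saddle correspondence maps, handled via \autoref{lem:corr-map} and \autoref{rem:chain} exactly as in \autoref{cor:P-polycycle}. You have simply made explicit the bookkeeping (the factorization through regular transitions and the fact that these carry exponent $1$) that the paper leaves implicit.
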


\begin{proof}
    The corollary follows from~\eqref{eqn:eps} and the observation that the map $\wt Δ_{γ, β}$ still is a product of correspondence maps for hyperbolic saddles.
    The rest of the proof proceeds as in \autoref{cor:P-polycycle}.
\end{proof}

\subsection{Simple bounds for the monodromy map \texorpdfstring{$Δ_{γ, β}$}{Δ\_\{γ, β\}}}
In the next subsection we complete the proof of the Asymptotic \autoref{lem:asym2}.
For this we have to study the iterates of the map $Δ_{γ,β}$.
In this subsection we compare this map with a simpler one, namely, with a monomial $C x^λ$ for some $λ$ and $C$.
The iterates of the latter map are easy to calculate.
It is important that the comparison works in the domain
\begin{equation}
    \label{eqn:u}
    U=\set{(x, β)|x≥ε>0}
\end{equation}
for $x$ and $β = (η, ε)$ small.
The lower bound for $x$ is motivated by the equality $Δ_{γ,β}(0)=ε$.
\begin{lemma}
    [Comparison Lemma]
    \label{lem:estim}
    Let $Λ(0) < 1$,
    $U$ be the same as in \eqref{eqn:u}.
    Then in the chart $x = x_β$ we have
    \begin{subequations}
        \label{eqn:broken-D}
        \begin{align}
            \label{eqn:broken-D:C0}
            Δ_{γ, β}(x)&=Θ\left( x^{Λ(η, 0)} \right)\\
            \label{eqn:broken-D:C1-x}
            D_xΔ_{γ, β}&>2;\\
            \label{eqn:broken-D:C1-e}
            D_εΔ_{γ, β}&>0
        \end{align}
    \end{subequations}
    for $(x, β )∈U$ small.
    Recall that \eqref{eqn:broken-D:C0} is, by definition, uniform in $β$.
\end{lemma}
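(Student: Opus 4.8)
The plan is to derive \eqref{eqn:broken-D} from the structural decomposition $Δ_{γ,β}(x) = ε + \wt Δ_{γ,β}(x)$ of \autoref{cor:2}, where $\wt Δ_{γ,β}$ satisfies property \eqref{eqn:corr-map} with the exponent $Λ(β)$, combined with the restriction to the domain $U = \set{(x,β)\mid x ≥ ε > 0}$. The key point is that on $U$ the summand $ε$ is dominated by $x$, so adding it does not spoil the monomial-type bounds coming from $\wt Δ_{γ,β}$; but it does help with the derivative estimates, since $D_ε(ε + \wt Δ_{γ,β}) = 1 + D_ε\wt Δ_{γ,β}$ and the correction $D_ε\wt Δ_{γ,β} = O(x^{Λ(β)}\log x)$ is $o(1)$ as $x→0$.

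First I would prove \eqref{eqn:broken-D:C0}. By \autoref{cor:2} and \eqref{eqn:corr:C0} applied to $\wt Δ_{γ,β}$, there are constants $0<c<C$ with $c\,x^{Λ(β)} ≤ \wt Δ_{γ,β}(x) ≤ C\,x^{Λ(β)}$ for small $(x,β)$. On $U$ we have $0 < ε ≤ x$, hence $0 < ε ≤ x ≤ x^{Λ(β)}$ once $x<1$ and $Λ(β)<1$ (shrinking the neighborhood so that $Λ(β)<1$, which is possible since $Λ(0)=λ(0)<1$ and $Λ$ is continuous). Therefore $c\,x^{Λ(β)} ≤ Δ_{γ,β}(x) = ε + \wt Δ_{γ,β}(x) ≤ (C+1)\,x^{Λ(β)}$, which is \eqref{eqn:broken-D:C0}, with constants uniform in $β$. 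Next, for \eqref{eqn:broken-D:C1-x}: since $ε$ does not depend on $x$, $D_xΔ_{γ,β}(x) = D_x\wt Δ_{γ,β}(x) = Θ(x^{Λ(β)-1})$ by \eqref{eqn:corr:C1-x}; because $Λ(β)-1<0$, this quantity tends to $+∞$ as $x→0$, uniformly in $β$ on a small enough neighborhood, so it exceeds $2$ there. (One should note the correspondence map is orientation-preserving and increasing, so the $Θ$-bound is actually a bound on a positive derivative; this is already built into the way property \eqref{eqn:corr-map} is used for Dulac maps.) Finally \eqref{eqn:broken-D:C1-e}: $D_εΔ_{γ,β}(x) = 1 + D_ε\wt Δ_{γ,β}(x)$, and by \eqref{eqn:corr:C1-b} the second term is $O(x^{Λ(β)}\log x)$, hence of absolute value $<\tfrac12$ for $(x,β)$ small; therefore $D_εΔ_{γ,β}>\tfrac12>0$.

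The one genuine subtlety — and the step I expect to be the main obstacle — is making the three estimates hold \emph{simultaneously} on a single neighborhood of the origin in $ℝ_+×ℝ^k$, uniformly in $β$, while respecting the constraint $x≥ε$. The monomial bounds from property \eqref{eqn:corr-map} are stated on a neighborhood that a priori could depend on $β$, so one has to invoke the uniformity clause of \autoref{lem:corr-map} (and of \autoref{cor:2}, which inherits it through the composition rule of \autoref{rem:chain}) to fix a common radius. Care is also needed because the chart $x_β$ depends on $β$ through $x(S(β))$; but since $S(β)→O$ continuously and the charts are smooth, $x_β$ ranges over a neighborhood of $0$ of $β$-independent size, so this causes no real trouble. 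Once the uniform neighborhood is secured, the three displays follow from elementary comparisons as above, and the lemma is proved.
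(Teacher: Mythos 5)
Your overall route is the paper's route: decompose $Δ_{γ,β}(x)=ε+\wt Δ_{γ,β}(x)$ via \autoref{cor:2}, use property \eqref{eqn:corr-map} for $\wt Δ_{γ,β}$, and exploit $ε\le x$ on $U$ to absorb the additive term. Your treatment of \eqref{eqn:broken-D:C1-x} and \eqref{eqn:broken-D:C1-e} coincides with the paper's and is fine.

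There is, however, a genuine gap in your proof of \eqref{eqn:broken-D:C0}. What you actually establish is $Δ_{γ,β}(x)=Θ\bigl(x^{Λ(η,ε)}\bigr)$, with the exponent $Λ(β)=Λ(η,ε)$ depending on $ε$; you then declare this to be \eqref{eqn:broken-D:C0}, whose statement is $Θ\bigl(x^{Λ(η,0)}\bigr)$ with the exponent frozen at $ε=0$. These are not the same: for fixed $ε\ne 0$ the ratio $x^{Λ(η,ε)}/x^{Λ(η,0)}=x^{Λ(η,ε)-Λ(η,0)}$ tends to $0$ or $∞$ as $x→0$, so the passage from one exponent to the other is false on all of $ℝ_+$ and requires the restriction to $U$. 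The paper supplies the missing step: since $Λ$ is smooth, $Λ(η,ε)-Λ(η,0)=O(ε)$, hence $x^{Λ(η,ε)-Λ(η,0)}=\left(x^{ε}\right)^{O(1)}$, and on $U$ one has $ε\le x<1$, so $ε^{ε}\le x^{ε}\le 1$ with $ε^{ε}→1$; therefore $x^{Λ(η,ε)-Λ(η,0)}=Θ(1)$ uniformly. This is precisely the second (and less obvious) use of the constraint $x\ge ε$ — you caught the first use, absorbing the summand $ε$, but not this one. The $ε$-independence of the exponent is not cosmetic: in the subsequent existence argument the comparison maps $f_{\pm}(x)=e^{\pm C}x^{λ}$ with $λ=Λ(η,0)$ must not depend on $ε$, because one solves $f_-^{m}(ε)=τ$ for $ε$ and iterates maps with a fixed exponent. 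With your version of \eqref{eqn:broken-D:C0} the comparison functions would vary with the unknown $ε$ and the iteration scheme would not close up.
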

\begin{remark}
    Instead of 2, we could take any constant greater than 1 in relation~\eqref{eqn:broken-D:C1-x}: below we will prove that $D_xΔ_{γ, β} →∞$ as $x →0$ uniformly in $β$.
\end{remark}
\begin{proof}
    Let us apply $D_ε$ to both sides of \eqref{eqn:D-plus-eps} from \autoref{cor:2}, then use \eqref{eqn:corr:C1-b}.
    We have:
    \[
        D_εΔ_{γ,β}(x)=1+D_ε\t Δ_{γ,β}(x)=1+O\left( x^{Λ(β)}\log x \right)=1+o(1).
    \]
    This implies \eqref{eqn:broken-D:C1-e}.
    Similarly,
    \[
        D_xΔ_{γ,β}(x)=D_x\t Δ_{γ,β}(x)=Θ\left( x^{Λ(β)-1} \right)→∞\text{ as $x→0$ uniformly in $β$},
    \]
    since $Λ(0)<1$, and $β$ is small.
    Thus we proved \eqref{eqn:broken-D:C1-x}.

    Finally, let us prove \eqref{eqn:broken-D:C0}.
    Due to \eqref{eqn:D-plus-eps} and \eqref{eqn:corr:C0},
    \[
        Δ_{γ, β}(x)=ε+Θ\left( x^{Λ(β)} \right),\quad β=(η, ε).
    \]

    For the first summand we have
    \[
        ε≤x=o\left( x^{Λ(η, 0)} \right),
    \]
    since $0<Λ(η, 0)<c<1$ for $η$ small enough.
    For the second summand, we need to prove that
    \[
        x^{Λ(η, ε)}=Θ\left(x^{Λ(η, 0)}\right) \text{ as $x→0$ inside $U$,}
    \]
    or equivalently,
    \[
        x^{Λ(η, ε)-Λ(η, 0)}=Θ(1) \text{ as $x→0$ inside $U$.}
    \]
    Since $Λ$ is a smooth function, we have
    \[
        x^{Λ(η, ε)-Λ(η, 0)}=x^{O(ε)}=\left(x^ε\right)^{O(1)}.
    \]
    Note that for $(x, β)∈U$, $x<1$, we have $ε^ε≤x^ε≤1$, and $ε^ε→1$ as $ε→0$.
    Therefore,
    \[
        x^{Λ(η, ε)-Λ(η, 0)}=(1+o(1))^{O(1)}=1+o(1).
    \]
    This implies~\eqref{eqn:broken-D:C0}.
\end{proof}

\subsection{Sparkling saddle connections}
In this section we complete the proof of \autoref{lem:asym2}, that is, prove the existence, uniqueness and monotonicity in $n$ of the solutions of the connection equation~\eqref{eqn:conn}.

\subsubsection{Existence}
The arguments below are based on the Comparison \autoref{lem:estim}.
Namely we replace the Poincaré map in the connection equation by a smaller (larger) map, whose iterates may be easily studied and for which the connection equation may be easily solved.
These solutions provide the estimates for the actual solutions of the connection equation.

Let us pass to the detailed proof.
Denote still by $U$ a small subdomain of $\set{ (x,β )| 0 < ε \le x}$ where all the estimates of this lemma hold.
More precisely, let
\begin{align*}
    K &= \set{ β = (η , ε )| \relax|η| < r, \ ε ∈ [0,ε_0]},&
    U &= \set{ (x, β )| β ∈ K, \ ε ≤ x ≤ r }
\end{align*}
for $ε_0, r$ small.
By~\eqref{eqn:broken-D:C0}, there exists $C > 0$ such that
\[
    e^{-C}x^{Λ (η ,0)} < Δ_{γ,β} (x ) < e^Cx^{Λ (η ,0)},
\]
where $β = (η , ε )$, $(x,β )∈U$.
Fix a small $η $ and denote for simplicity
\[
    Λ (η ,0) = λ , \ f_-(x) = e^{-C}x^λ, \ f_+(x) = e^Cx^λ .
\]
The following estimates are obviously uniform in $η $ small;
for this reason we omit the dependence of $λ$ on $η$ in notation.
Each of the maps $f_±$ has two fixed points: $0$ and $x_± = \exp{\left(± \frac {C}{1-λ }\right)}$.
Note that $x_- < 1 < x_+=x_-^{-1}$.
On the segment $[0,x_-]$ the map $f_-$ pushes all points from the repeller $0$ to the attractor $x_-$.
In particular, all the negative iterates of $f_-$ are well-defined in $(0, x_-)$, hence for any $τ∈(0,x_-)$, and any $n$ the equation $f_-^n(x) = τ $ has a solution in $(0, x_-)$.
Let $U$ be chosen so that in $U$, $x < x_- < x_+$.

Let $m∈ℕ$, $τ∈(0, x_-)$ and $ε$ be such that $f^m_-(ε) = τ$, and $(ε, β)∈U$.
Then
\begin{equation}
    \label{eqn:estimp}
    Δ^m_{γ,β} (ε) > τ.
\end{equation}

The right hand side of the connection equation~\eqref{eqn:conn} does not in general belong to $(0, x_-)$.
Let us replace equation~\eqref{eqn:conn} by an equivalent one with the right hand side in $(0,x_-)$.
For this let
\[
    T(β ) = x_β (Δ_{γ, β} ^{-a}(I'(β ) )).
\]

Note that for $a$ sufficiently large, $T(η ,0)$ may be arbitrary small because the separatrix of $I(η ,0)$ approaches the connection $l$ in the negative time.
Hence, for $r$ in the definition of $K$ sufficiently small, $a$ may be chosen so large that for any $β∈K$, $T(β ) < x_-$.
Connection equation~\eqref{eqn:conn} is equivalent to
\begin{equation}
    \label{eqn:conn2}
    Δ_{γ,β} ^{n-a-1}(ε ) = T(β ), \ β∈K.
\end{equation}
Let $m = n - a - 1$.
Take
\begin{equation}
    \label{eqn:psim}
    ψ_{m,η }(ε) = Δ^{m}_{γ,β} (ε) - T( β ), \ β = (η , ε).
\end{equation}
Let $ε^+_m$ be the solution of the equation
\begin{equation}
    \label{eqn:estp}
    f^m_-(ε^+_m) = τ , \ τ = \max_K T
\end{equation}
It exists because $τ < x_-$.
Then $ψ_{m,η}(ε^+_m) > 0$, by~\eqref{eqn:estimp}.
Let $ε^-_m$ be the solution of the equation
\begin{equation}
    \label{eqn:estm}
    f^m_+(ε^-_m) = t, \ t = \min_K T.
\end{equation}
It exists because $t < x_- < x_+$.
Then $φ_{m,η }(ε^-_m) < 0$.
By the Intermediate Value Theorem, equation $φ_{m,η }(ε ) = 0$ which is equivalent to~\eqref{eqn:conn} has a solution $ε_m$, and
\begin{equation}
    \label{eqn:estd}
    ε^-_m < ε_m < ε^+_m.
\end{equation}

\subsubsection{Uniqueness}
Let us prove that for $n$ large enough solution for~\eqref{eqn:conn} is unique.
Suppose that $n$ is so large that we can replace \eqref{eqn:conn} by \eqref{eqn:conn2}.
Then it is sufficient to prove that $D_ε ψ_{m,η}(ε)>0$ for large $m=n-a-1$ and sufficiently small $ε$.
We have:
\[
    D_ε \left(Δ_{γ,β}^m(ε)\right)=
        ∑_{l=0}^{m-1}\left(D_x Δ_{γ,β}^l\right)\left(Δ_{γ,β}^{m-l}(ε)\right)×
        D_ε Δ_{γ,β}\left(Δ_{γ,β}^{m-l-1}(ε)\right)+\left(D_x Δ_{γ,β}^m\right)(ε).
\]
Due to~\eqref{eqn:broken-D:C1-x} and~\eqref{eqn:broken-D:C1-e}, all the summands above are positive, and the last one is greater than $2^m$.
Therefore, for $m$ large enough and $ε>0$ small enough, $D_ε \left(Δ_{γ,β}^m(ε)\right)$ is greater than $|D_ε T|$, hence
\begin{equation}
    \label{eqn:mone}
    D_ε ψ_{m,η}(ε)>0.
\end{equation}

\subsubsection{Monotonicity}
Let us prove that for $m$ large enough, $ε_m (η) < ε_{m-1} (η)$.
By definition, for $β= (η, ε_m)$,
\[
    Δ_{γ,β}^{m}(ε_m) = T(β).
\]
Inequality~\eqref{eqn:broken-D:C0} implies that for $ε > 0$, $Δ_{γ,β} (x) > x$.
Hence,
\[
    Δ_{γ,β}^{m-1}(ε_m) < T(β).
\]
Let $ψ_{m-1,η}$ be the same as in~\eqref{eqn:psim}, with $m$ replaced by $m-1$.
The latter inequality implies: $ψ_{m-1,η }(ε_{m}) <0$.
By~\eqref{eqn:mone}, $ψ_{m-1,η }(ε)$ monotonically increases in $ε$.
By definition, $ψ_{m-1,η}(ε_{m-1}) = 0$.
Hence, $ε_{m-1} > ε_m$.

\subsubsection{Estimates}
Equations~\eqref{eqn:estp} and~\eqref{eqn:estm} may be easily solved;
together with~\eqref{eqn:estd}, the formulas for solutions imply \eqref{eqn:asym5}.
Recall that
\[
    f_-(x) = e^{-C}x^λ,\quad
    x_- = e^{-\frac {C}{1-λ }} < 1.
\]
Note that
\[
    f_-^m(x) = C_mx^{λ^m},\quad
    C_m = e^{-C\frac {1-λ^m}{1-λ }} \searrow x_- \mbox{ as } m→∞.
\]
For any $τ < x_-$, equation $f_-^m(x)=τ$ is equivalent to $x^{λ^m} = τ C^{-1}_m$.
Note that $τ C_m^{-1} \nearrow τ x_-^{-1} < 1$ as $m→∞$.
Then, for the solution $ε_m^+$ of \eqref{eqn:estp}
\[
    \log (-\log ε^+_m) = -m\log λ + τ_0 + o(1),\quad
    τ_0 = \log (-\log τ x_-^{-1}).
\]
Similarly, $f_+(x) = e^Cx^λ$, $f^m_+(x) = C_m^{-1}x^{λ^m}$.
Then, for the solution $x = ε_m^-$ of \eqref{eqn:estm},
\[
    \log (-\log ε_m^-) = -m\log λ + t_0 + o(1),\quad
    t_0 = \log (-\log τ x_-).
\]
Note that $t_0 > τ_0$, because $x_- < 1$.
By \eqref{eqn:estd}
\[
    -m\log λ + τ_0 + o(1) < \log (-\log ε_m) < -m\log λ + t_0 + o(1).
\]
This proves~\eqref{eqn:asym5} and, together with it, \autoref{lem:asym2}.

\subsection{Proof of asymptotic lemmas}

In this section we deduce Lemmas~\ref{lem:asym}, \ref{lem:asym1} from \autoref{lem:asym2}.
\autoref{lem:asym} follows from \autoref{lem:asym1}, so we prove the latter one only.

In statement~\eqref{eqn:im1}, we deal with a polycycle $γ$ such that $λ(γ)<1$.
Hence we can apply \autoref{lem:asym2} and obtain~\eqref{eqn:im1}.
In order to prove the other half of \autoref{lem:asym1}, relation \eqref{eqn:en1}, it is enough to reverse the time, i.e., to replace the vector field $v$ by $-v$.

Let us now prove~\eqref{eqn:en1} in more details.
Consider a family $ℰ$ from \autoref{lem:asym1}.
Let $γ(η)$ be the monodromic polycycle of the vector field $v_{(η, 0)}$.
By assumption~\eqref{eqn:char}, $λ (η) > 1$.
By definition of the family $ℰ$, there exists a hyperbolic saddle $E$ whose separatrix winds to $γ_e(η)$ in the positive time.
The family $ℰ $ from \autoref{lem:asym1} is a particular case of the family~\eqref{eqn:famee} with the only difference:
$λ (γ) > 1$ instead of $λ(γ) < 1$.
Let us now reverse the time.
We will get a vector field $-v_{(η,0)}$ with the same polycycle, but with an opposite orientation.
Denote it by $γ^-(η)$.
The characteristic numbers of the saddles, under the time reversal, are replaced by the reciprocal ones.
Hence, $λ (γ^-(η)) = λ^{-1} (γ_e(η)) <1 $.
The modified family $ℰ $ is a particular case of the family~\eqref{eqn:famee}.
\autoref{lem:asym2} is now applicable;
it implies:
\[
    \log (-\log e_n(η)) = - n \log (λ^{-1} (η)) + O(1),
\]
This relation is equivalent to~\eqref{eqn:en1}.
This completes the proof of \autoref{lem:asym1}, hence \autoref{lem:asym}, modulo \autoref{lem:corr-map}.

\subsection{Correspondence maps of hyperbolic saddles: proof of \autoref{lem:corr-map}}
\paragraph{Preliminary considerations}

Here we prove \autoref{lem:corr-map}.
Let $\set{v_β}$ be a family of vector fields described in \autoref{lem:corr-map};
$β$ is now the same as in \autoref{sub:saddle}.
Clearly, the assertion of the lemma depends neither on the choice of $Γ^+$ and $Γ^-$, nor on the choice of natural charts $x$ and $y$.

Let us choose some coordinates $(x_β, y_β)$ near $L(β)$ such that $S(β)=\set{x_β=0}$ and $U(β)=\set{y_β=0}$.
Due to Hadamard−Perron Theorem, we may and will assume that $x_β$ and $y_β$ are $C^3$-smooth functions of the original coordinates and $β$.
The differential of this coordinate change is $C^2$-smooth, thus $\dot x_β$ and $\dot y_β$ are $C^2$-smooth functions of $x_β$, $y_β$.
For simplicity, we write $(x,y)$ instead of $(x_β, y_β)$.
Since $\dot x(0, y)=0$ and $\dot y(x, 0)=0$, due to Hadamard Lemma, we have
\begin{align*}
    \dot x&=xf_1(x, y, β);&
    \dot y&=-yf_2(x, y, β),
\end{align*}
where $f_1$ and $f_2$ are $C^1$-smooth functions.
This vector field has the same correspondence map as the vector field
\begin{subequations}
    \label{eqn:orbital}
    \begin{align}
        \label{eqn:orbital-x}
        \dot x&=x;\\
        \label{eqn:orbital-y}
        \dot y&=-yg(x, y, β),
    \end{align}
\end{subequations}
where $g(x, y, β)=\frac{f_2(x, y, β)}{f_1(x, y, β)}$ is a $C^1$-smooth function as well.
Clearly, $g(0, 0, β)=λ(β)$.
Choose a~neighborhood ${U⊂ℝ^2× ℝ^k}$ of $(L(0),0)$ such that in $U$
\begin{equation}
    \label{ineq:g}
    \frac 12λ(0)<g(x, y, β)<2λ(0).
\end{equation}
Choose $Γ^+$ and $Γ^-$ so that all trajectories of $v_β$ going from $Γ^+$ to $Γ^-$ stay in $U$.
After a rescaling, we may and will assume that $Γ^+=\set{y=1}$ and $Γ^-=\set{x=1}$.
We shall prove \eqref{eqn:corr-map} for the restrictions of $x$ and $y$ to $Γ^+$ and $Γ^-$, respectively.

\paragraph{Estimate of $Δ_β$}
This estimate follows the proof of Lemma 1 in \cite{GK07}, see also \cite[Section 9.3]{HW}.

Fix a small positive $x_0$.
Consider a trajectory $(x,y)(t)$ of \eqref{eqn:orbital} starting at $(x_0, 1)$.
In our notation, we skip the indication of the dependence of the solution on the parameter $β $.
Due to \eqref{eqn:orbital-x}, we have $x(t)=x_0e^t$, hence this trajectory arrives to $Γ^-$ at $T=-\log x_0$.
Next, \eqref{ineq:g} implies that $y(t)≤e^{-λ(0)t/2}$, but we need a sharper estimate.
Note that for $0≤t≤T$ we have
\begin{equation}
    \label{eqn:O-x-y}
    ∫_0^tO(x(τ))+O(y(τ))\,dτ=∫_0^tO\left(e^{τ-T}\right)+O\left(e^{-λ(0)τ/2}\right)\,dτ=O(1).
\end{equation}
Therefore, for $0≤t≤T$ we have
\begin{align*}
    \log y(t)&=∫_0^t\frac{\dot y(τ)}{y(τ)}\,dτ=-∫_0^tg(x(τ), y(τ), β)\,dτ\\
             &=-∫_0^t\left( λ(β)+O(x(τ))+O(y(τ))\right)dt = -λ(β)t+O(1).
\end{align*}
Thus
\begin{equation}
    \label{eqn:y-t}
    y(t)=Θ\left( e^{-λ(β)t} \right).
\end{equation}

From now on replace $x_0 $ by $x$.
For $t=T = - \log x$, equation~\eqref{eqn:y-t} implies~\eqref{eqn:corr:C0}, because $y(T) = Δ_β (x)$, $e^{-λ(β)T} = x^{λ(β)}$.

\paragraph{Decomposition of $Δ_β$}
In order to estimate the derivatives of the germ of $Δ_β$ at $x$, let us decompose it in the following way.
Consider an auxiliary local cross-section ${Γ^x}⊂\set{(x, y), \ y∈ (ℝ,1)}$; it passes through the point $(x,1)$ and is orthogonal to $Γ^+$.
Equip $Γ^x$ with the chart $y$.
Let
\begin{align*}
    Δ_β^+&\colon (Γ^+, x)→(Γ^x, 1),\\
    Δ_β^{x}&\colon (Γ^x, 1)→(Γ^-, Δ_β(x));
\end{align*}
be the correspondence maps along the orbits of $v_β$, same as correspondence maps for~\eqref{eqn:orbital-x}, \eqref{eqn:orbital-y}.
Then
\[
    Δ_β=Δ_β^x∘Δ_β^+.
\]

\begin{figure}
    \centering
    \includegraphics[scale=0.5]{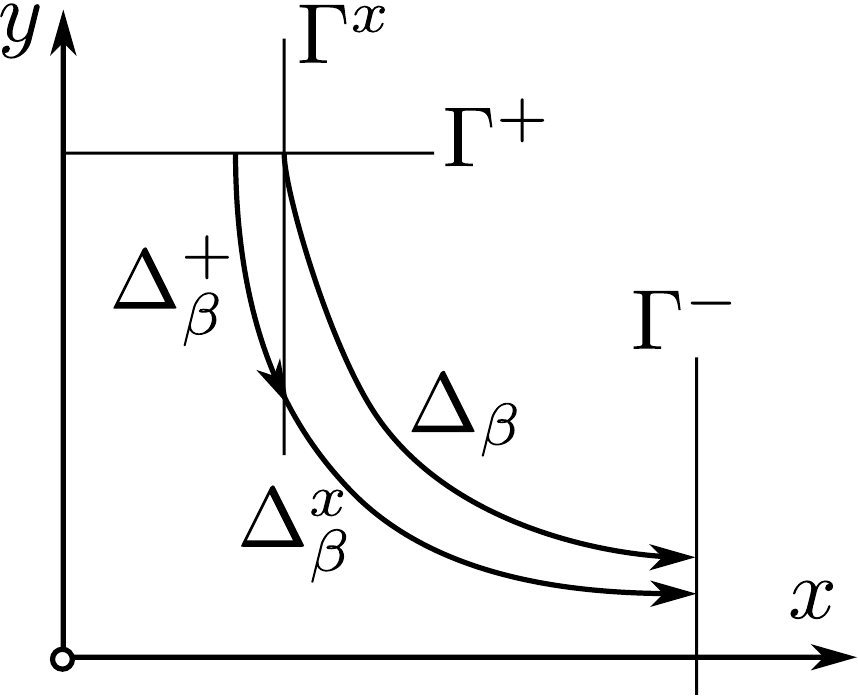}
    \caption{A saddle, some cross-sections and correspondence maps}
    \label{fig:Delta_x}
\end{figure}

\paragraph{Estimate of $D_xΔ_β(x)$}
Note that $D_xΔ_β^+(x)=\frac{g(x, 1, β)}{x}=Θ(x^{-1})$, hence it is enough to show that $D_yΔ_β^x(y)|_{y=1}=Θ\left( x^{λ(β)} \right)$.

Recall that $x(t)=x(0)e^t$.
Equation~\eqref{eqn:orbital-y} with $x$ replaced by $x(t)=x(0)e^t$, is a one-dimensional non autonomous equation.
Let $y(t, β, a)$ be a solution of this equation with the initial condition $y(0, β, a) = a$;
as before, $β = (η, ε)$.
Denote by $z(t)$ and $w(t)$ the derivatives of this solution with respect to the initial conditions and the parameter $ε$:
\[
    z(t) = D_ay(t, β, a)|_{a = 1},
    \ w(t) = D_ε y(t, β, 1),
    \ ε∈(ℝ, 0).
\]
This implies: $z(0) = 1$, $w(0) = 0$.
Denote for simplicity $y(t, β,1) = y(t)$;
we skip in this notation the dependence on $β$ that actually takes place.
Note that
\[
    Δ_β (x) = y(T),
    \ T = - \log x;
    \ D_y Δ_β^x (1) = z (T),
    \ D_ε Δ_β^x (1) = w (T).
\]
The variational equation for $z(t)$ has the form:
\[
    \dot z = - A(t)z, \ A(t) = (D_y(yg))(x(t), y(t), β).
\]
Hence, $A(t) = g(x(t), y(t), β) + y D_y g(x(t), y(t), β) = λ (β) +O(x(t)) +O(y(t))$.
Therefore, by~\eqref{eqn:O-x-y},
\[
    \log z(t) = -\int_0^t A(τ) dτ = - λ (β) t + O(1);
    \ \log z(T) = λ (β) \log x + O(1).
\]
Hence, $(D_yΔ_β^x)(1)=z(T) = Θ\left( x^{λ(β)} \right)$, thus $D_xΔ_β(x)=Θ\left( x^{λ(β)-1} \right)$.
This implies \eqref{eqn:corr:C1-x}.

\paragraph{Estimate of $D_εΔ_β(x) = w(T)$}
The equation of variations for \eqref{eqn:orbital-y} with respect to the parameter takes the form
\begin{align*}
    \dot w &= -A(t) w - B(t),&
    B(t) &=\left( D_ε (yg)\right) (x(t), y(t), β),&
    w(0) &= 0,
\end{align*}
$A(t)$ is the same as above.
We have:
\begin{align*}
    w(t) &= z(t) C(t),&
    \dot C &= - B(t) z^{-1} (t),&
    w(0) &= 0.
\end{align*}
Hence,
\[
    w(T) = - z(T) \int_0^T B(t) z^{-1} (t)\,dt.
\]
We have: $B(t) = \left( D_ε (yg)\right) (x(t), y(t), β) = -y O(1)$, $z^{-1} (t) = O(1) \exp \left( λ (β) t \right)$.
Due to \eqref{eqn:y-t}, $\exp \left( λ (β) t \right)=Θ(y(t)^{-1})$, hence $z^{-1}(t)=O(y(t)^{-1})$.
Thus
\[
    D_ε Δ_β(x)=w(T) = Θ\left( x^{λ(β)} \right)∫_0^T O(1)\,dt=O\left( x^{λ(β)}\log x \right).
\]
This implies \eqref{eqn:corr:C1-b}.

\autoref{lem:corr-map} is proved.
It implies \autoref{lem:asym2}.
The proof of the main results of this paper is now completed.

\section{New perspective}\label{sec:perspective}

\subsection{No structural stability in codimension three}

\autoref{thm:unst1} implies that there is an open set in the space of three parameter families of vector fields that are not structurally stable.
Namely, consider two vector fields $v_0$, $\t v_0$ and their unfoldings that satisfy the assumptions of \autoref{sec:family}.
If the ratios of the logarithms of the characteristic numbers of the saddles $L$ and $M$ for these vector fields are different, then the two unfoldings are not moderately topologically equivalent, however close they are.

\subsection{No versal families whose dimension equals the codimension of the degeneracy: a conjecture}
In early 70's Arnold suggested a new approach that revolutionized the bifurcation theory.
It was based on a concept of \emph{versal families}.
The original definition of these families may be found in \cite{A}, and the final one in \cite{AAIS}.
Without reproducing this definition we will mention only that versal families are special unfoldings of a degeneracy of a certain class that contain a very concentrated information about the bifurcations in arbitrary local families that unfold the degeneracies of this class.
Since early 70's many versal families were investigated.
It so happened that their dimension was always equal to the codimension of the degeneracy, though this is not required by the definition.
Moreover, two different versal deformations of the same vector field are weakly (and even moderately) topologically equivalent.
This equivalence holds for all versal deformations studied up to now.
This is a folklore fact, not written anywhere.

\autoref{thm:func-unst} implies that two generic $6$-parameter local families passing through the same vector field of class $\bfT_{2,1}$, are not topologically equivalent.
Indeed, consider two generic unfoldings $V$ and $W$ of the same vector field of class $\bfT_{2,1}$.
These families belong to an open subset of $\mcV_6(S^2)$ mentioned in \autoref{thm:func-unst}.
By this theorem, they may have different functional invariants $f: (ℝ_+,a)→(ℝ_+,b)$, $g: (ℝ,a)→(ℝ_+,b)$, because any such germ may be realized as an invariant of such a local family.
Hence, the families $V$ and $W$ are not equivalent.

This gives a strong evidence to the following

\begin{conjecture}
    There are no versal $6$-parameter local families that unfold generic vector fields of class $\bfT_{2,1}$.
    Moreover, for any $k ≥ 6$ there exists an open set of vector fields for which there are no $k$-parameter versal families.
\end{conjecture}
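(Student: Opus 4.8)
The plan is to argue by contradiction, converting the functional invariant supplied by \autoref{thm:func-unst} and \autoref{thm:fuinv-loc} into an obstruction to versality. Fix a generic $v_0\in\bfT_{2,1}$; then $v_0\in\bfT_2$, and I would first record that a generic $6$-parameter local unfolding of $v_0$ lies in $(\bfT_2)^\trans_6$, so by \autoref{thm:fuinv-loc} ($d=1$, $D=2$) it carries the simple diagram $[\phi_W]$ whose modulus (see \autoref{rem:real}) is a smooth germ $g_W\colon(ℝ_+,a)\to(ℝ_+,b)$ with $(a,b)=(\phi_1(v_0),\phi_2(v_0))$ \emph{fixed}, and that, conversely, \emph{every} germ of a smooth map $(ℝ_+,a)\to(ℝ_+,b)$ arises as some $g_W$. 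Now suppose $V$ were a weakly (hence also moderately) versal $6$-parameter local family unfolding $v_0$. Unwinding the definition of versality from \cite{AAIS}, every $6$-parameter local unfolding $W$ of $v_0$ is weakly topologically equivalent to a family $\theta^{\ast}V$ induced from $V$ by a continuous germ $\theta\colon(ℝ^6,0)\to(ℝ^6,0)$, $\theta(0)=0$; moreover, versality forces the base of $V$ to be transversal to the stratum $\bfT_{2,1}$, hence automatically transversal to the larger stratum $\bfT_2$, so that $J_V=V\cap\bfT_2$ is a one-dimensional germ and the image of the simple-diagram map $\phi_V$ of $V$ is contained in a \emph{fixed} curve through $(a,b)$ (or, in the degenerate case, a single point).

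Next I would establish the functorial estimate that does the work. On $J_{\theta^{\ast}V}=\theta^{-1}(B(J_V))$ the $\alpha$-vector field of $\theta^{\ast}V$ is literally $v_{\theta(\alpha)}$, so $\phi_{\theta^{\ast}V}=\phi_V\circ\theta$ there and the image of $\phi_{\theta^{\ast}V}$ is contained in the image of $\phi_V$. Since the simple diagram is a moderate-equivalence invariant (\autoref{prop:many-diag}) and an equivalence of diagrams preserves images, it follows that the graph of $g_W$ is contained, as a germ at $(a,b)$, in the fixed curve from the previous paragraph. But by the realization part of \autoref{thm:fuinv-loc} I can produce a $6$-parameter unfolding $W$ of the \emph{same} $v_0$ whose modulus $g_W$ is an arbitrary smooth germ at $(a,b)$ — in particular one whose graph is \emph{not} contained in that fixed curve (two distinct $C^1$ germs of curves through $(a,b)$, e.g.\ $y=b$ and $y=b+(x-a)$, cannot be nested, and a nonconstant curve cannot sit inside a point). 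This contradiction shows $v_0$ admits no versal $6$-parameter local family.

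For the ``moreover'' part I would treat each $k\ge 6$ by replacing $\bfT_{2,1}$ with the codimension-$(k-1)$ Banach submanifold obtained from $\bfT_2$ by imposing $k-6$ additional, mutually independent generic semistable limit cycles separating $\gamma_e$ from infinity (placed, as in the construction of $\bfT_{D,d}$, so as to be topologically distinguished). Such a submanifold still carries the two invariant functions $\phi_1,\phi_2$, so by \autoref{thm:invfun} and \autoref{prop:many-diag} every $k$-parameter family transversal to it has a functional invariant of the same $(1,2)$-type with arbitrarily prescribable modulus, and the argument above applies verbatim to exhibit an open set of vector fields with no $k$-parameter versal unfolding.

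The main obstacle I anticipate is not the curve-containment step, which is short, but pinning down precisely the two inputs on which it rests. First, one must translate Arnold's notion of weak versality from \cite{AAIS} accurately enough to guarantee the induction property with $\theta$ continuous and $\theta(0)=0$ and to justify the transversality of $V$ to $\bfT_{2,1}$; various weakenings of the definition (mini-versality, induction only up to weak equivalence with inducing maps of limited regularity, or induction that additionally permits phase-space homeomorphisms) must each be checked to still yield the containment of the image of $\phi_{\theta^{\ast}V}$ inside the image of $\phi_V$. Second, one must verify that \autoref{thm:fuinv-loc} can be strengthened so that its realization holds with the \emph{central} field $v_0$ held fixed — i.e.\ that the freedom in the characteristic numbers $\lambda_j$ along $J_W$, constrained only by \eqref{eqn:lg}, genuinely yields, for one and the same $v_0$, unfoldings realizing two inequivalent moduli. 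Granting these, the conjecture follows; conceptually it says that a versal family can carry only a single fixed functional modulus whereas a generic deformation's modulus is an arbitrary smooth germ, so no family can be versal.
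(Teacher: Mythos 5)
This statement is a \emph{conjecture} in the paper: the authors do not prove it, they only derive ``strong evidence'' from \autoref{thm:func-unst} (namely, that two generic $6$-parameter unfoldings of the same vector field of class $\bfT_{2,1}$ need not be topologically equivalent) and remark that ``the second author has a strategy of the proof.'' So there is no proof in the paper to compare yours against; what you have written is an attempt to close a gap the authors deliberately left open, and it inherits exactly the difficulties that made them stop short.

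The decisive gap is the step where you pass from the induced family $\theta^{\ast}V$ to the containment of images of the diagram maps. The identity $\varphi_{\theta^{\ast}V}=\varphi_V\circ\theta$ on $\theta^{-1}(B(J_V))$ is fine, because there the fibers are literally the same vector fields. But to conclude anything about $W$ you must transport the diagram through the equivalence $W\sim\theta^{\ast}V$, and the functions $\varphi_j$ are \emph{not} orbital topological invariants of individual vector fields --- they are ratios of logarithms of characteristic numbers, recovered only via the counting of sparkling saddle connections in \autoref{lem:asym1}, which is proved for smooth families transversal to $\bfT_2$. The invariance statement (\autoref{thm:invfun}, \autoref{prop:many-diag}) is established only for a \emph{moderate} equivalence between two families both of class $\left(\bfT_2\right)^\trans_6$; the induced family $\theta^{\ast}V$, with $\theta$ merely continuous, is in general not such a family (its ``subfamily'' $\theta^{-1}(B(J_V))$ need not be a manifold, and the asymptotics of $i_m$, $e_n$ need not pull back in a controlled way), and versality in \cite{AAIS} only guarantees a \emph{weak} equivalence, for which the paper explicitly skips the proofs of its invariance theorems. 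So the assertion ``the graph of $g_W$ is contained in the fixed curve'' is not justified by anything proved in the paper; it is precisely the missing content of the conjecture. You flag both of these issues yourself as ``obstacles,'' but they are not peripheral verifications --- they are the whole proof. (The secondary point you raise, realizing an arbitrary modulus by unfoldings of one fixed $v_0$, is comparatively harmless: it follows from the freedom of the $\lambda_j$ along curves in $\bfT_2$ through $v_0$ subject to \eqref{eqn:lg}, and is essentially what the authors use for their ``strong evidence.'') Your outline of the ``moreover'' part via extra semistable limit cycles is consistent with the paper's construction of $\bfT_{D,d}$, but it rests on the same unproved transport step.
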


The second author has a strategy of the proof of this conjecture.
Note that our arguments imply no functional invariants for \emph{seven}-parametric unfoldings of vector fields of class $\bfT_{2,1}$.
The problem of existence of versal families whose dimension is higher than the codimension of the degeneracy is still open.

\begin{problem}
    Let $\bfM⊂\Vect(S^2)$ be a Banach submanifold of finite codimension.
    Is it true that for a generic $v∈\bfM$ and $k≥\codim\bfM$ large enough, there exists a $k$-parameter versal deformation of $v$?
\end{problem}

The authors expect that there exist classes $\bfM$ for which the answer is negative.

\subsection{Good, bad and ugly families of vector fields}
The space of all finite-parameter families of vector fields on the sphere may be split in three classes, each one more complicated than the previous class.

\emph{Good families} are moderately structurally stable ones.

\begin{conjecture}
    \label{conj:1param}
    Generic one-parameter families of vector fields in the two sphere are good.
\end{conjecture}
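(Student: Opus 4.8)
The plan is to combine the classical theory of generic one-parameter bifurcations of surface vector fields with the observation that the mechanisms producing the moduli of this paper are of codimension at least two, upgrade the resulting equivalences to \emph{moderate} ones, and glue them over the one-dimensional base.

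First I would carry out a \textbf{genericity reduction}, in the spirit of Sotomayor's classification of generic one-parameter families on surfaces. By jet transversality on $B\times S^2$ together with the density of Morse--Smale fields on $S^2$ (Peixoto), for a generic one-parameter family $V=\{v_\alpha\}$ the set of parameter values at which $v_\alpha$ is not Morse--Smale is discrete, hence (after passing to a relatively compact base) finite; at each such value $v_\alpha$ exhibits exactly one of the standard codimension-one degeneracies --- a saddle-node of equilibria, an Andronov--Hopf point, a semistable limit cycle, a homoclinic loop of a hyperbolic saddle with characteristic number $\ne 1$, or a single heteroclinic saddle connection --- and $V$ unfolds it transversally; the same holds for all $C^3$-small perturbations of $V$. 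The crucial point is that the ingredients behind \autoref{thm:unst1} and \autoref{thm:fuinv-loc} --- polycycles with two or more vertices, a saddle with characteristic number $1$, separatrices winding onto a polycycle --- lie in subsets of $\Vect(S^2)$ of codimension $\ge 2$, so a generic arc misses them entirely; in particular no sparkling saddle connections ever occur along $V$.

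Next, over each connected component of the complement of the finite bifurcation set, $v_\alpha$ is Morse--Smale and structurally stable; running the usual proof of structural stability (tubular neighborhoods of the finite non-wandering set, fundamental domains along separatrices) with parameters gives, for $V$ and a close perturbation $\widetilde V$, linking homeomorphisms $H_\alpha$ depending continuously on $\alpha$ --- a topological, hence moderate, equivalence on that part of the base. At each bifurcation value one must instead prove \textbf{local-in-parameter moderate structural stability}: near the value, $V$ is moderately equivalent to $\widetilde V$, the linking map being continuous in $(\alpha,x)$ at $\overline{\Sing V\cup\Per V\cup\Sep V}$. For the saddle-node, Andronov--Hopf, and semistable-cycle values this follows from the classical local analyses, the equivalences being visibly continuous in $\alpha$ on the invariant sets in question, and the single heteroclinic connection is an easy case. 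The homoclinic loop (Andronov--Leontovich) is the substantive one, since full topological equivalence of such families already carries a functional modulus \cite{R}, which moderate equivalence is precisely designed to discard.

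Finally I would \textbf{glue}: cover the base by the Morse--Smale intervals and small neighborhoods of the bifurcation values, on each of which a moderate equivalence between $V$ and $\widetilde V$ is available, and interpolate them into a single $H=(h,H_\alpha)$ using isotopy extension in $S^2$ and a partition of unity in $\alpha$; continuity of $H$ at $\overline{\Sing V\cup\Per V\cup\Sep V}$ is inherited piece by piece. I expect the main obstacle to be the homoclinic-loop step: one must show that the continuity requirement responsible for the functional modulus under full topological equivalence is exactly the one dropped by moderate equivalence --- i.e.\ that the modulus is absorbed by the freedom of $H_\alpha$ off the closed set $\overline{\Sing V\cup\Per V\cup\Sep V}$ --- and that this can be arranged compatibly with the gluing near the endpoints of the loop's parameter interval. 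A secondary difficulty is making the genericity reduction fully rigorous, in particular excluding accumulation of bifurcation values and controlling transversality uniformly under perturbation.
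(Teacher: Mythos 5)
A point of order first: the paper does not prove this statement — it is \autoref{conj:1param}, left open, with its justification delegated to the classification of generic one-parameter families (\cite{S74}, \cite{I}, and the work in progress of the first author with Solodovnikov and Starichkova). So there is no proof in the paper to compare yours against, and I can only assess your plan on its own terms.

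Your plan has a genuine gap at its first step, and it is exactly the phenomenon this paper is about. You claim that for a generic one-parameter family the set of bifurcation values is discrete (hence finite) and that ``no sparkling saddle connections ever occur along $V$'' because the responsible degeneracies have codimension at least two. That is false. A separatrix loop $l$ of a hyperbolic saddle with characteristic number less than $1$ is a codimension-\emph{one} degeneracy, and the existence of a further saddle $I$ whose stable separatrix winds onto $l$ in negative time is an \emph{open} condition — as the paper notes in \autoref{subsub:class-bfT}, winding separatrixes do not increase the codimension. Hence a generic arc transversal to this stratum exhibits, on one side of the critical value, an infinite sequence of sparkling saddle connections $I(\alpha)L(\alpha)$ accumulating at that value (the one-sequence analogue of \autoref{lem:asym}); the same happens for a single heteroclinic connection with a winding saddle. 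So the bifurcation set of a generic one-parameter family is in general countably infinite with accumulation points, your finite-cover-and-glue scheme does not apply as stated, and the local step at a loop or connection value must, on top of the Andronov--Leontovich analysis, produce a homeomorphism of the parameter axis matching two infinite sequences of connection values together with linking maps at those values that are continuous at $\overline{\Sing V\cup\Per V\cup\Sep V}$. For a \emph{single} such sequence no numeric invariant arises — any two monotone sequences tending to $0$ are matched by a homeomorphism of $(\mathbb{R}_+,0)$, which is why the conjecture remains plausible and why the paper needs codimension three to create \emph{two} interacting sequences with an invariant relative density — but carrying this out is precisely the nontrivial content your outline omits. The remaining steps (parametrized Morse--Smale stability, the standard local analyses, gluing) are reasonable in outline, but they constitute the substance of the unfinished classification program the paper cites rather than a proof.
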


More difficult is

\begin{conjecture}
    Generic two-parameter families of vector fields in the two sphere are good.
\end{conjecture}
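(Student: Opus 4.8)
The plan is to follow the same local-to-global scheme that should settle \autoref{conj:1param}, but now over a two-dimensional base. First I would establish a complete list of the bifurcations occurring in a generic family $V\in\mcV_2(S^2)$. They split into \emph{local} bifurcations, happening at a non-hyperbolic singular point or a non-hyperbolic cycle (saddle-node and Andronov--Hopf in codimension one; Bogdanov--Takens, Bautin, and cusp in codimension two), and \emph{nonlocal} ones, happening along separatrix connections and polycycles (a homoclinic loop of a hyperbolic saddle and a semistable cycle in codimension one; the short list of codimension-two polycycles --- a loop with zero saddle value, a figure-eight, a two-saddle polycycle with two connections, and the like --- catalogued in \cite{AAIS}). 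The goal is to show that the bifurcation diagram $\Sigma\subset B$ of a generic $V$ is, up to homeomorphism, a curve with finitely many singular points of standard type, that $B\setminus\Sigma$ consists of finitely many regions of constant orbital topological type, and that any small perturbation $\t V$ admits a homeomorphism of the bases carrying $\Sigma$ to $\Sigma_{\t V}$ region by region and stratum by stratum.

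The analytic core is a collection of local statements: every codimension-$\le 2$ degeneracy in the list above admits a structurally stable unfolding, versal in the moderate sense, and every generic codimension-$\le 2$ polycycle has finite cyclicity. The codimension-one facts are classical; the codimension-two ones are furnished by the theory of two-parameter bifurcations of planar vector fields developed after Bogdanov and Takens, together with finite-cyclicity results for elementary polycycles. Granting these, one glues the local models along the strata of $\Sigma$ by a partition of unity and the standard fragmentation/isotopy technique, and arranges continuity of the gluing homeomorphism in $\alpha$ on $\overline{\Sing V\cup\Per V\cup\Sep V}$ stratum by stratum --- which is exactly what \autoref{def:moderate-eq-nonloc} asks for. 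A technical prerequisite, not an obstacle, is that moderate equivalence is defined in this paper only for fields with hyperbolic singular points, whereas a two-parameter family passes through non-hyperbolic ones; the general definition of \cite{GI} must be imported first.

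The main obstacle is twofold. The first difficulty is that a genuinely complete and rigorous classification of generic two-parameter families of vector fields on $S^2$ --- equivalently, a proof that the list of codimension-$\le 2$ bifurcations is exhaustive and that the attaching maps of their strata are always of standard type --- has never been carried out, even in the plane; this is the hard analytic content of the conjecture. The second, more structural point is the need to exclude any analogue of the sparkling-connection mechanism of \autoref{sec:family}. That mechanism requires, at once, a saddle whose separatrix winds onto an interior loop $l$ with $\lambda(l)<1$, a saddle whose separatrix winds onto the surrounding polycycle $\gamma$ with $\lambda(\gamma)>1$, and a heart polycycle absorbing the third parameter; the two resulting sparkling sequences have a relative density that is a genuine modulus. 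One must show that with only two parameters such a configuration is unreachable: already one winding separatrix forces a monodromic polycycle, hence generically a codimension-one degeneracy of the polycycle itself, and producing two independent sparkling sequences together with a connection that has to be broken to realize them costs a third parameter. Making this dimension count rigorous, uniformly over all global phase portraits, is where the real work lies, and I expect it to be substantially harder than the one-parameter case --- which is why the paper states it as a separate, more difficult conjecture.
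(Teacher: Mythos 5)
The statement you are addressing is not proved in the paper at all: it is stated as a conjecture, immediately followed by the authors' remark that they ``do not expect that the proof of this conjecture is either easy or short'' and that it plausibly requires a complete topological classification of generic two-parameter families --- a classification the paper itself describes as ``far from being solved.'' Your proposal is therefore not comparable to a proof in the paper; and, as your own closing paragraph concedes, it is not a proof either. The two items you label ``the main obstacle'' --- exhaustiveness of the codimension-$\le 2$ list with standard attaching maps, and a rigorous dimension count excluding sparkling-connection moduli --- are precisely the entire mathematical content of the conjecture. A strategy whose critical steps are flagged as ``never been carried out'' and ``where the real work lies'' cannot be accepted as a proof, however reasonable the outline.

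Beyond that, one concrete assumption in your plan conflicts with what the paper itself reports. You posit that the bifurcation diagram of a generic two-parameter family is a curve with \emph{finitely many} singular points of \emph{standard} type and that the complement has finitely many regions of constant topological type, so that gluing reduces to a finite list of local models. But the paper notes (citing \cite{KS} and \cite{I}) that already in two-parameter families one observes countably many pairwise nonequivalent germs of bifurcation diagrams, arising from sparkling saddle suspensions over codimension-one and codimension-two polycycles; these produce infinitely many strata accumulating on a point of the diagram, with the families in each infinite series distinguished by the number and mutual position of the separatrixes involved. Structural stability of each individual family is not thereby excluded, but your finite-list reduction and the claim of finitely many regions both fail, and the gluing argument would have to control infinitely many accumulating strata uniformly --- which is an additional, unaddressed difficulty rather than a routine ``technical prerequisite.''
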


We do not expect that the proof of this conjecture is either easy or short.
Plausibly, it requires a topological classification of all different classes of two parameter families.
This classification is discussed below.

\emph{Bad families} are those whose moderate topological classification has numeric invariants.
These families are not structurally stable.

\emph{Ugly families} are those whose moderate topological classification has functional invariants.

Study of the boundaries between good, bad and ugly families is a challenging problem.
A particular statement is:

\begin{problem}
    Distinguish structurally unstable generic three parameter families from the structurally stable ones.
\end{problem}

In this paper we proved that bad three-parameter families exist.
No doubt that good three-parameter families exist too.

An interesting problem is to construct more examples of bad three-parameter families.

\begin{problem}
    What is the smallest number of parameters for which ugly families exist?
\end{problem}

\autoref{thm:fuinv} implies that this number is no greater than $6$.
We expect that the actual answer is even smaller.

\subsection{Classification problems}

\begin{problem}
    Classify all generic one-parameter local families of vector fields on the two-sphere up to moderate topological equivalence.
\end{problem}

Preliminary steps of this classification, in particular, the classification of all the possible degeneracies, are done in \cite{S74}.
A complete description, without a proof, is suggested in \cite{I}.
Together with his students N.~Solodovnikov and V.~Starichkova, the first author completes the justification of this description.
As a by product, this description implies \autoref{conj:1param}.
\begin{problem}
    Classify all generic two-parameter local families of vector fields on the two-sphere up to moderate topological equivalence.
\end{problem}

This problem is far from being solved.
The strategy may be the following.
In \cite{KS}, a complete list of polycycles that may occur in generic two and three parameter families was presented.
It looks natural to study “sparkling saddle suspensions” over these polycycles.
This construction may produce about two or three dozens of infinite series of two-parameter local families, each one corresponding to suspensions of sparkling saddle connections over the polycycles of codimension two from the list, or over two coexisting polycycles of codimension one.
Two families in a series differ by a number and a mutual location of the separatrixes of saddles that form sparkling saddle connections.
There may be other invariants.
The families in each series should be classified like it is done for one-parameter families.

Complete classification of generic three-parameter families looks like a very large, but not yet hopeless problem.
One of the first steps may be the following.

\begin{problem}
    Find ALL the topological invariants of the family described in \autoref{thm:unst1}.
\end{problem}

\subsection{Continuum of germs of bifurcation diagrams}

Arnold conjectured \cite{AAIS} that for any $k$ there exists but a finite number of pairwise topologically nonequivalent germs of bifurcation diagrams that may occur in generic $k$-parameter families.
(A bifurcation diagram is a subset of the base of the family that corresponds to structurally unstable vector fields.)
This conjecture was disproved in \cite{KS} where a countable number of nonequivalent germs of bifurcation diagrams in three-parameter families was constructed.
Recently it occurred that this effect may be observed even in two-parameter families \cite{I}.

\begin{conjecture}
    There exists an open set in the space of three-parameter families such that the set of pairwise topologically different germs of bifurcation diagrams that may occur in the families from this set has cardinality continuum.
\end{conjecture}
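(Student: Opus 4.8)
Towards this conjecture the plan is to work inside the class $\bfC$ of \autoref{sub:unperturbed}--\autoref{sub:gencond} and to upgrade the numeric invariant $\nu$ of \autoref{thm:unst1} -- which separates such families up to moderate topological equivalence -- into an invariant of the germ of the bifurcation diagram alone, regarded as a subset of $(\mathbb R^3,0)$. Since $\nu$ runs over all of $\mathbb R_+$ on the open set of \autoref{thm:unst}, this would yield continuum many pairwise topologically distinct germs of bifurcation diagrams inside one open set of $\mcV_3(S^2)$.

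So I would fix the non-empty open set $\mathcal U\subset\mcV_3(S^2)$ provided by \autoref{thm:unst}; every $V\in\mathcal U$ lies in $\bfC\cap\{V\subset\mcH\}$, and $\nu$ runs over all positive values as $V$ ranges over $\mathcal U$. For $V\in\mathcal U$ write $\Sigma_V\subset B$ for the bifurcation diagram and consider its germ at the point $0$ where $V$ meets $\bfT$. The decisive step is an \emph{intrinsic} topological description of $(\Sigma_V,0)$. Near $0$ this germ contains the three smooth transversal sheets $\Pi_1=\{\sigma_1=0\}$, $\Pi_2=\{\sigma_2=0\}$, $\Pi_3=\{\sigma_3=0\}$ (persistence of the loop $l$ and of the connections $LM$, $ML$), the interior sparkling sheets $\mcI_m=\{\sigma_1=i_m(\sigma_2,\sigma_3)\}$ and the exterior sparkling sheets $\mcE_n$, and the line $\mcE=\Pi_2\cap\Pi_3=\{\sigma_2=\sigma_3=0\}$, which $\mcI_m$ crosses at $i_m$ and $\mcE_n$ crosses at $e_n$ in the notation of \autoref{lem:asym}. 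I would try to prove: (i) $\{\mcI_m\}$ accumulates onto a full neighbourhood of $0$ in $\Pi_1$ and onto no other sheet, while $\bigcup_{n\ge n_0}\mcE_n$ shrinks to the point $0$ as $n_0\to\infty$; and (ii) the remaining components of $\Sigma_V$ near $0$ -- sheets carrying the limit cycles shed by the attracting polycycles $\gamma$ and $l$, and whatever the spherical extension of \autoref{subsub:ext} contributes -- form only finitely many sheets near each $\Pi_j$, so they cannot accumulate onto $\Pi_1$ the way $\{\mcI_m\}$ does. Granting (i)--(ii), $\Pi_1$ is the unique transversal sheet of $\Sigma_V$ that is a limit of an accumulating sequence of other sheets; hence $\{\Pi_1,\Pi_2,\Pi_3\}$ is a topologically distinguished triple with $\Pi_1$ marked, so $\mcE$, and then the two subsets $\{i_m\}=\mcE\cap\bigcup\mcI_m$ and $\{e_n\}=\mcE\cap\bigcup\mcE_n$, are topologically distinguished inside $(\Sigma_V,0)$.

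The rest would copy \autoref{subsub:num}. Given $V,\widetilde V\in\mathcal U$ and a homeomorphism $g\colon(\mathbb R^3,0)\to(\mathbb R^3,0)$ with $g(\Sigma_V)=\Sigma_{\widetilde V}$, the previous paragraph forces $g(\mcE)=\widetilde{\mcE}$, $g(\{i_m\})=\{\widetilde i_{m'}\}$ and $g(\{e_n\})=\{\widetilde e_{n'}\}$; as $g|_{\mcE}$ is a monotone homeomorphism of $(\mathbb R_+,0)$ and the sequences $(i_m)$, $(e_n)$ are monotone by \autoref{lem:asym}, $g$ matches them up to a constant shift of the indices, hence preserves the relative density of the pair. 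By \autoref{cor:spar} that relative density equals $\nu(V)$, so $\nu(V)=\nu(\widetilde V)$. Since $\nu$ runs over all positive values on $\mathcal U$, the germs $(\Sigma_V,0)$, $V\in\mathcal U$, fall into continuum many homeomorphism classes -- the assertion of the conjecture.

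I expect step (ii) to be the main obstacle. The bifurcation diagram near the codimension-three degeneracy is much richer than the line $\mcE$ that the Asymptotic Lemma controls, and one must show that none of the extra components -- limit cycles born from $\gamma$ or $l$, bifurcations imported through the global extension -- accumulate onto $\Pi_2$ or $\Pi_3$, or onto $\Pi_1$ in a way resembling $\{\mcI_m\}$, so that no homeomorphism of the base can scramble the marked triple or the two sparkling sequences. Once that local picture is nailed down, step (i) and the matching argument are routine.
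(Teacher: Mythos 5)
The paper does not prove this statement: it appears in \autoref{sec:perspective} as an open conjecture, accompanied only by the remark that the first two authors ``have a strategy of the proof.'' So there is no proof in the paper to compare yours against, and what you have written is likewise a strategy rather than a proof. Your reduction is sensible --- if the germ of the set $\Sigma_V\subset(\mathbb R^3,0)$ alone determines $\nu(V)$, then \autoref{thm:unst} gives continuum many topologically distinct germs --- but both of your steps (i) and (ii) are genuinely open, not routine consequences of the paper.

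Concretely: \autoref{lem:asym} and \autoref{lem:asym2} control the sparkling connections only on the one-dimensional line $\mcE=\{\sigma_2=\sigma_3=0\}$. Nothing in the paper shows that the interior loci extend to graphs $\{\sigma_1=i_m(\sigma_2,\sigma_3)\}$ over a full two-dimensional neighbourhood of $0$, let alone that they accumulate onto all of $\Pi_1$ and onto nothing else; for the exterior connections the separatrix of $E$ must traverse the heart connections $LM$ and $ML$ on every turn, so for $\sigma_2,\sigma_3\neq0$ the exterior locus need not be a graph at all and its structure is not analysed anywhere. So step (i) is itself an unproved claim, not just step (ii). More fundamentally, in \autoref{subsub:num} the line $\mcE$ and the sequences $(i_m)$, $(e_n)$ are recovered by using the fibre homeomorphisms $H_\alpha$, which identify which vector fields carry which saddle connections; for the conjecture you have only a homeomorphism of the germ of the set $\Sigma_V$, so every distinguished object must be characterised by the intrinsic topology of that set, in the presence of all the other strata (limit-cycle surfaces, other saddle-connection surfaces, whatever the global extension of \autoref{subsub:ext} contributes). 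That characterisation is precisely the content of the conjecture beyond \autoref{thm:unst1}, and until it is supplied the relative-density argument cannot be launched.
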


The first two authors have a strategy of the proof of this conjecture.

\subsection{Numeric and functional invariants in local and semilocal bifurcations }

Semilocal theory studies bifurcations in a neighborhood of an arbitrary polycycle; denote this polycycle by $γ$.
The phase space is now a germ of a neighborhood $(ℝ^2, γ)$.
Moderate topological equivalence of semilocal families on $(ℝ^2, γ) × (ℝ^k,0)$ is defined as in \autoref{sec:intro}.

\begin{problem}
    Are there numeric or functional invariants of moderate topological classification of semilocal families?
\end{problem}

\begin{problem}
    The same question about the local families: may numeric or functional invariants occur in the moderate topological classification of families of vector fields in a neighborhood of a singular point?
\end{problem}

We expect that “hidden” sparkling saddle connections may occur in the unfoldings of polycycles, even hyperbolic, and produce numeric and functional invariants.
The same expectation concerns unfoldings of complex singular points.
This is a realization of the following heuristic principle:
\begin{quote}
    \emph{All effects observed for global families in the plane may be observed for the local and semi-local ones, may be, with a greater number of parameters.}
\end{quote}

\paragraph{Acknowledgments}
The authors are grateful to Christian Bonatti, Anton Gorodetski and Alexei Klimenko for fruitful suggestions.

\printbibliography
\end{document}